\newtheorem{theorem}{Theorem}[chapter]
\newtheorem{proposition}[theorem]{Proposition}
\newtheorem{corollary}[theorem]{Corollary}
\theoremstyle{remark}
\newtheorem{remark}[theorem]{Remark}
\definecolor{refkey}{rgb}{0,0,1}
\definecolor{labelkey}{rgb}{1,0,0}
\newenvironment{phantomequation}[1][]{\refstepcounter{equation}}{}
\newenvironment{claim}[1][{\textup{(\theequation)}}]{\refstepcounter{equation}\vglue10pt
\begin{trivlist}
\item[{\hskip\labelsep#1}]}{\vglue10pt\end{trivlist}}
\numberwithin{equation}{chapter}
\newcommand{\sC}{\mathscr{C}}
\newcommand{\cL}{\mathcal{L}}
\newcommand{\cF}{\mathcal{F}}
\newcommand{\bR}{\mathbb{R}}
\newcommand{\bC}{\mathbb{C}}
\newcommand{\W}{\mathsf{W}}
\newcommand{\T}{\mathsf{T}}
\newcommand{\w}{\mathsf{w}}
\newcommand{\D}{\mathsf{D}}
\newcommand{\N}{\mathsf{N}}
\newcommand{\const}{\mathsf{const}}
\newcommand{\corr}{\mathsf{corr}}
\renewcommand{\Im}{\operatorname{Im}}
\newcommand{\supp}{\operatorname{supp}}
\newcommand{\dist}{\operatorname{dist}}
\newcommand{\x}{\mathsf{x}}
\newcommand{\y}{\mathsf{y}}
\newcommand{\eff}{{\nu}}
\title{Pointwise Spectral Asymptotics out of the Diagonal near Boundary\thanks{\emph{2010 Mathematics Subject Classification}: 35P20.}\thanks{\emph{Key words and phrases}: Microlocal Analysis, sharp  spectral asymptotics.}}
\author{Victor Ivrii\thanks{This research was supported in part by National Science and Engineering  Research Council (Canada) Discovery Grant  RGPIN 13827}}
\begin{document}
\maketitle

\begin{abstract}
We establish uniform (with respect to $x$, $y$) semiclassical asymptotics and estimates for the Schwartz kernel $e_h(x,y, \tau)$ of spectral projector for a second order elliptic operator on the manifold with a boundary. While such asymptotics for  its restriction to the diagonal $e_h(x,x,\tau)$ and, especially, for its trace $\N_h(\tau)= \int e_h(x,x,\tau)\,dx$ are well-known, the out-of-diagonal asymptotics are much less explored, especially uniform ones.

Our main tools: microlocal methods, improved successive approximations and geometric optics methods.

Our results would also lead to \emph{classical} asymptotics of $e_h(x,y,\tau)$ for fixed $h$ (say, $h=1$) and $\tau\to \infty$.
\end{abstract}

\chapter{Introduction}
\label{sect-1}

\paragraph{Asymptotics away from the boundary.}
\label{sect-1.1}
Let us start from the case of asymptotics when points $x$ and  $y$ are disjoint from the boundary. While sharp classical asymptotics of $e (x,x,\tau)$ for elliptic operators are known from 
L.~H\"ormander \cite{hoermander:spectral}, and could be traced to B.~M.~Levitan \cite{levitan:asymp, levitan:spect} and V.~G.~Avakumovi{\v{c}} \cite{avak:eigen}, I could not find asymptotics of $e(x,y,\tau)$, despite it could be easily derived by the method of Fourier integral operators combined with Tauberian theorem.

\begin{theorem}
\label{thm-1.1}
Let $A=A^\w (x,hD,h)$ be a self-adjoint scalar pseudo-differential operator with $\sC^K$-symbol ($K=K(d)$) which is $\xi$-microhyperbolic in $B(0,1)\subset X$  on the energy  level $\tau$\,\footnote{\label{foot-1} Which means that \begin{gather} 
|a(x,\xi)-\tau|+ |\nabla_\xi a(x,\xi) |\ge \epsilon_0
\label{eqn-1.1}
\end{gather}
where $a(x,\xi)\coloneqq A(x,\xi,0)$.}.
Further, let  $\{\xi\colon a(x,\xi)=\tau\}$ be strongly convex. Then 
\begin{gather}
e_h(x,y,\tau)=e_h^\W (x,y,\tau) +O(h^{1-d}) \qquad  \forall x,y\in B(0,\epsilon )
\label{eqn-1.2}\\
\shortintertext{where}
e_h^\W (x,y, \tau)=(2\pi h)^{-d}  \int _{\{a(\frac{1}{2}(x+y), \xi)<\tau\}} e^{ih^{-1}\langle x-y,\xi\rangle}\,d\xi
\label{eqn-1.3}
\end{gather}
is corresponding Weyl expression.
\end{theorem}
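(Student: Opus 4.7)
The plan is to reduce the problem to analysis of the semiclassical propagator $u_h(x,y,t)$, the Schwartz kernel of $e^{-ih^{-1}tA}$, by a Tauberian argument, and then describe $u_h$ for short times via microlocal and FIO methods. Pick $\chi \in \sC_0^\infty(\bR)$ with $\chi(0)=1$ and $\hat\chi$ supported in $[-T_0, T_0]$, $T_0$ small depending only on $\epsilon_0$ and bounds on $a$. A Fourier inversion in $t$ expresses $\int \hat\chi(t)\, e^{ih^{-1}t\tau}\, u_h(x,y,t)\,dt$ as a smoothed version of $\partial_\tau e_h(x,y,\tau)$ on the spectral scale $h$, and the standard one-sided Tauberian theorem, combined with the monotonicity of $e_h(x,x,\tau)$ in $\tau$ and a Cauchy--Schwarz step to pass to the off-diagonal, converts $\sC^K$-estimates of the left-hand side into the asymptotic \eqref{eqn-1.2}.

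The central step is the construction of $u_h$ on $|t|\le T_0$ in FIO form
\[
u_h(x,y,t) \,=\, (2\pi h)^{-d} \int e^{ih^{-1}(\phi(t,x,\xi) - \langle y,\xi\rangle)}\, b(t,x,\xi;h)\, d\xi \,+\, O(h^\infty),
\]
obtained by the improved successive approximation scheme: start from the free propagator, solve the eikonal $\partial_t \phi + a(x,\nabla_x\phi)=0$ with $\phi|_{t=0}=\langle x,\xi\rangle$, and build $b\sim \sum_{j\ge 0} h^j b_j$ via transport equations. Microhyperbolicity \eqref{eqn-1.1} guarantees existence of $\phi$ and $b$ on a uniform time interval and ensures that, outside a neighborhood of the characteristic shell $\{a(x,\xi)=\tau\}$, the phase $\phi(t,x,\xi) - \langle y,\xi\rangle - t\tau$ has no $t$-critical points, so repeated integration by parts in $t$ reduces the full integral to a neighborhood of the critical set; a $t$-stationary-phase then produces the Weyl expression \eqref{eqn-1.3} (with midpoint $\tfrac12(x+y)$ arising from the Weyl symbol averaging) modulo $O(h^{1-d})$.

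Strong convexity of $\{a(x,\cdot)=\tau\}$ is used to control the Weyl expression \eqref{eqn-1.3} itself uniformly in $(x,y)$: when $|x-y|\gg h$ it decays via stationary phase on the boundary surface, at points where $\nabla_\xi a$ is parallel to $x-y$, and non-degeneracy there is exactly the non-vanishing Gauss curvature delivered by strict convexity. Both sides of \eqref{eqn-1.2} are thereby of size $O(h^{1-d})$ uniformly in $B(0,\epsilon)^2$.

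The main obstacle I expect is uniformity in $(x,y)$: producing the same remainder $O(h^{1-d})$ for all pairs in $B(0,\epsilon)$, including the regime $|x-y|\gg h$ where \eqref{eqn-1.2} is really an $L^\infty$ bound rather than an asymptotic. One must verify that the symbolic remainders in the FIO construction and the errors in the $t$-stationary-phase are uniform in $y$, not merely pointwise in $x$, and that the constants in the associated propagation-of-singularities and non-stationary-phase estimates do not degenerate as the base point varies. Here again the globally transverse behaviour of the Hamiltonian flow on $\{a=\tau\}$ guaranteed by strong convexity is what makes the uniform error possible.
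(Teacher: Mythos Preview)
Your outline---Tauberian reduction to the short-time propagator, then an FIO parametrix, then stationary phase---matches the paper's strategy in Section~\ref{sect-2}. But two points are glossed over in a way that hides real work.

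First, your sentence ``a $t$-stationary-phase then produces the Weyl expression \eqref{eqn-1.3} (with midpoint $\tfrac12(x+y)$ arising from the Weyl symbol averaging) modulo $O(h^{1-d})$'' is not how the midpoint appears, and the passage from the Tauberian/FIO expression to \eqref{eqn-1.3} is not a single stationary phase. The FIO parametrix (Proposition~\ref{prop-2.1}) has phase $\varphi(x,y,\theta)$ and the resulting Tauberian expression is an integral over $\{a(y,\theta)<\tau\}$, not over $\{a(\tfrac12(x+y),\xi)<\tau\}$. The paper handles this by a case split on $\ell=|x-y|$ (Proposition~\ref{prop-2.4}): for $\ell\ge h^{1/2}$ both $e^\T_{T,h}$ and $e^\W_h$ are individually $O(h^{1-d})$ by the stationary-phase bound \eqref{eqn-2.20}; for $\ell\le h^{1/2}$ one replaces $\varphi$ by $\langle x-y,\theta\rangle$ and $a(y,\cdot)$ by $a(\tfrac12(x+y),\cdot)$, and the errors, of size $h^{-(d+1)/2}\ell^{(3-d)/2}$ and $h^{(1-d)/2}\ell^{(1-d)/2}$, are each $O(h^{1-d})$ \emph{only when $d\ge 3$}. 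The midpoint is not forced by Weyl quantization; it is the choice that makes the next approximation step (Section~\ref{sect-4.1.2}, freezing at $w=\tfrac12(x+y)$) gain an extra power of $\ell$.

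Second, and this is the genuine gap: for $d=2$ the crude replacement above fails in the intermediate range $h^{1/2}\lesssim\ell\lesssim h^{1/3}$, and the paper explicitly invokes a separate argument (Proposition~\ref{prop-5.13}(i)), based on writing $x$ and $y$ as endpoints of a short Hamiltonian trajectory through the midpoint, to obtain $\varphi^0(x,y,\theta)=\langle x-y,\bar\xi\rangle+O(\ell^3)$ rather than $O(\ell^2)$. Your proposal does not distinguish $d=2$ from $d\ge3$ and contains no mechanism to recover this extra power of $\ell$; without it the remainder in \eqref{eqn-1.2} would only be $O(h^{-3/2}\ell^{1/2})$ in part of the range when $d=2$. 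Strong convexity is used exactly where you say---non-degenerate stationary points on $\Sigma(y,\tau)$---but that alone does not close the $d=2$ case.
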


We will prove this theorem and discuss its generalizations and also more precise results in Section~\ref{sect-2}.

\paragraph{Asymptotics near the boundary.}
\label{sect-1.2}
Classical asymptotics of $e (x,x,\tau)$ for second order elliptic operators are known from  R.~Seeley \cite{seeley:sharp, seeley:boundary}, and sharp ones from V. Ivrii. The semiclassical version would carry remainder estimate $O(h^{1-d}\eff(x)^{-\frac{1}{2}})$ and $O(h^{1-d})$ respectively while the trivial remainder estimate would be $O(h^{1-d}\eff(x)^{-1})$ where $\eff(x)=\dist(x,\partial X)$. 

Let 
\begin{align*}
\ell^0(x,y)&\coloneqq |x-y|,\\
\ell\ (x,y) &\coloneqq |x-y|+\eff(x)+\eff(y) .
\end{align*}

\begin{theorem}\label{thm-1.2}
Let 
\begin{gather}
A=\sum_{j,k} \bigl(hD_j-V_j(x)\bigr) g^{jk}(x)\bigl(hD_k-V_k(x)\bigr)+V(x),\qquad
g^{jk}=g^{kj}, 
\label{eqn-1.4}
\end{gather}
be a self-adjoint scalar operator  which is elliptic second order differential operator with $\sC^K$-coefficients ($K=K(d,\delta )$), and it  is $\xi$-microhyperbolic in $B(0,1)$  on the energy  level $\tau$, with either Dirichlet or Neumann boundary condition  on $\partial X \cap B(0,1)$, and $\partial X\in \sC^K$\,\footnote{\label{foot-2} More general boundary conditions could be also considered.}. Then
\begin{enumerate}[wide,label=(\roman*),  labelindent=0pt]
\item
For $d\ge 3$ asymptotics \textup{(\ref{eqn-1.2})} holds where now for Dirichlet or Neumann boundary condition 
\begin{gather}
e_h^\W (x,y, \tau)= e_h^{0,\W} (x,y, \tau)\ [\mp] \ e_h^{0,\W}(x,\tilde{y}, \tau)
\label{eqn-1.5}
\end{gather}
respectively, $e_h^{0,\W}(x,y,\tau)$ defined by \textup{(\ref{eqn-1.3})} and $\tilde{y}$ is a reflected point.
\item
For $d=2$ under one of the following assumptions
\begin{enumerate}[label=(\alph*), itemindent=10pt]
\item
$\ell (x,y) \le h^{\frac{1}{3}+\delta}$,
\item
$\ell (x,y) \ge h^{\frac{1}{3}-\delta}$,
\item
$\eff(x) +\eff(y)\ge C_0(\ell^2 + h^{\frac{2}{3}-2\delta})$
\end{enumerate}
where here and below  $\delta>0$ is an arbitrarily small exponent, asymptotics
\begin{gather}
e_h(x,y,\tau)=e_h^\W (x,y,\tau) +e_{h,\corr} (x,y,\tau)+ O(h^{1-d}) \quad  \forall x,y\in B(0,\epsilon )
\label{eqn-1.6}
\end{gather}
holds with  correction term $e_{h,\corr} (x,y,\tau)$ tol be defined by \textup{(\ref{eqn-5.53})} later.
\item
Finally, for $d=2$ if neither of condition (a)--(c) is fulfilled
\begin{gather}
e_h(x,y,\tau)= O(h^{-1-\delta}).
\label{eqn-1.7}
\end{gather}
\end{enumerate}
\end{theorem}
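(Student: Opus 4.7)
The first step is to localize and flatten the boundary. In $B(0,1)$ I choose coordinates $(x',x_d)$ with $\partial X=\{x_d=0\}$ and $x_d=\eff$, and, after conjugating by a suitable smooth factor and absorbing lower-order terms, reduce $A$ to the canonical form $(hD_d)^2+A'(x,hD_{x'},h)$ modulo smoothing errors. In these coordinates, the reflected point is $\tilde y=(y',-y_d)$ and the two terms in (\ref{eqn-1.5}) acquire their natural geometric meaning. The next step is Tauberian: for a mollifier $\varphi_T$ of width $T$, the difference between $e_h(x,y,\tau)$ and its Tauberian approximant is $O(h^{1-d}T^{-1})$, so it suffices to describe the Schwartz kernel of $\varphi_T(hD_t)\,e^{-ih^{-1}tA}$ on an interval $|t|\le T_0$, with $T_0$ a fixed small constant chosen so that multiple boundary reflections do not occur inside $B(0,\epsilon)$.

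Next, I construct this propagator by the method of images combined with geometric optics and successive approximations. The direct branch reproduces the interior Fourier integral operator already used in Theorem~\ref{thm-1.1}, whose $t$-integration yields $e_h^{0,\W}(x,y,\tau)$. The reflected branch is obtained by solving the eikonal equation with boundary data and applying the standard Dirichlet/Neumann matching algorithm, producing an FIO whose canonical relation is the reflected bicharacteristic flow and whose $t$-integration yields $\mp e_h^{0,\W}(x,\tilde y,\tau)$. A microlocal partition near the characteristic variety $\{a=\tau\}$ separates elliptic, hyperbolic and glancing zones: the elliptic zone contributes $O(h^\infty)$ by ellipticity, the hyperbolic zone is exactly covered by the two FIOs above, and only the glancing zone requires real work.

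For part (i), $d\ge 3$, the glancing set has positive codimension in the characteristic variety, so an extra non-stationary phase integration in $d-2\ge 1$ directions transverse to the glancing set absorbs its contribution into the Tauberian error $O(h^{1-d})$, yielding (\ref{eqn-1.2}).

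Parts (ii)--(iii) in $d=2$ constitute the main obstacle: the glancing set is codimension one in a one-dimensional characteristic variety, so a transverse non-stationary phase gain is unavailable, and the glancing contribution has to be computed essentially explicitly. I would perform an Airy-type normal-form reduction near glancing rays, producing a universal model whose kernel is expressed via the Airy function and its derivatives; this defines the correction $e_{h,\corr}$ appearing in (\ref{eqn-1.6}) and (\ref{eqn-5.53}). In case (a), $\ell\le h^{\frac13+\delta}$, the two points lie within one Airy scale and the model delivers (\ref{eqn-1.6}) directly; in case (b), $\ell\ge h^{\frac13-\delta}$, the Airy phase is essentially non-stationary and matching estimates produce (\ref{eqn-1.6}); case (c) decouples boundary and interior contributions because the reflected ray accumulates sufficient phase. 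In the transitional regime where none of (a)--(c) holds, a dyadic decomposition in $\ell/h^{\frac13}$ and $\eff/h^{\frac13}$ yields only the weaker bound (\ref{eqn-1.7}). The chief technical difficulty throughout is proving that the Airy-model errors are uniform in $(x,y)$, so that the stated remainders are genuinely pointwise rather than averaged.
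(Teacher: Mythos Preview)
Your Tauberian setup and direct/reflected FIO decomposition match the paper, but your glancing-zone analysis takes a genuinely different route. The paper does \emph{not} use Airy normal forms. Instead: for large $\ell$ it bounds $e^{1,\T}_h$ via positivity of the spectral projector, localizing to a $(\rho,\sigma)$-slab near the two critical $\xi'$-points and exploiting the volume factor $\sigma\rho^{d-2}$ (Proposition~\ref{prop-3.8}, Corollary~\ref{cor-3.9}); for small $\ell$ it runs an \emph{improved} successive-approximation scheme freezing the symbol at the midpoint $w=(0,\tfrac12(x'+y'))$ rather than at $y$ (Section~\ref{sect-4.2.2}), which extends convergence from $\ell\le h^{1/2+\delta}$ to $\ell\le h^{1/3+\delta}$ and produces $e_{h,\corr}$ as the explicit second term of the series (equation~(\ref{eqn-4.54})); for non-glancing incidence angles it constructs the reflected eikonal directly by geometric optics (Section~\ref{sect-5}). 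The resulting correction (\ref{eqn-5.53}) is a \emph{quadratic}-phase perturbation of the reflected Weyl kernel, not an Airy integral, so your assertion that an Airy model ``defines the correction appearing in (\ref{eqn-5.53})'' would require a separate matching argument that you have not supplied. Your Airy route is in principle viable and potentially more unified near glancing, but the paper's approach is more elementary, avoids special-function machinery, and yields the correction in closed form---at the price of patching three overlapping regimes and accepting the weaker bound (\ref{eqn-1.7}) in the unresolved transition strip $h^{1/3+\delta}\le\ell\le h^{1/3-\delta}$. The midpoint-freezing trick, which is the paper's central technical innovation for reaching the $h^{1/3}$ threshold, has no counterpart in your sketch.
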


\begin{remark}\label{remark-1.3}
\begin{enumerate}[wide,label=(\roman*),  labelindent=0pt]
\item
According to  \cite{monsterbook}, Theorem~\ref{monsterbook-thm-8-1-6}  asymptotics (\ref{eqn-1.2}) holds for  $x=y$ in the coordinate system such that
$x_1$ is the distance from $x$ to $\partial X$ in the metrics $g^{jk}(\tau - V)^{-1}$. Such precision is needed for $d=2$ only. 

\item
The leading term $e_h^\W(x,y,\tau)$ is $\asymp h^{-\frac{d-1}{2}}\ell ^{-\frac{d+1}{2}}$. In particular it is $O(h^{1-d})$ as 
$\ell \gtrsim h^{\frac{d-1}{d+1}}$.

\item
The correction term (as $d=2$) is $O(h^{-\frac{3}{2}}\ell^{\frac{1}{2}})$ if $\ell \le h^{\frac{1}{2}}$ and $O(h^{-\frac{1}{2}}\ell^{-\frac{3}{2}})$ if 
$h^{\frac{1}{2}}\le \ell \le \epsilon$.
\item
The trivial estimate (in much more general settings) is
\begin{gather}
|e_h(x,y, \tau)-e^\W_h( x,y,\tau)|\le Ch^{1-d} (1+\ell^{-1}(x,y)).
\label{eqn-1.8}
\end{gather}
\end{enumerate}
\end{remark}

\paragraph{Ideas of proofs. I.}
\label{sect-1.3}
Consider $x,y\in B(0,\frac{1}{2})$.  We already know that  under pretty general assumptions (which will be discussed later), in particular, in the frameworks of Theorems~\ref{thm-1.1} and~\ref{thm-1.2},   the following estimate holds:
\begin{gather}
e(x,x, \tau'+h)-e(x,x, \tau')\le Ch^{1-d} \qquad \text{for\ \ } |\tau'-\tau|\le \epsilon_0.
\label{eqn-1.9}\\
\shortintertext{Then}
|e(x,y, \tau)-e(x,y, \tau')|\le Ch^{-d}|\tau -\tau'|+ h^{1-d}
\label{eqn-1.10}\\
\intertext{and therefore due to Tauberian methods}
|e_h(x,y, \tau)-e_{T,h}^\T (x,y, \tau) |\le CT^{-1}h^{1-d}
\label{eqn-1.11}\\
\shortintertext{where}
e_{T,h}^\T(x,y, \tau)= h^{-1}\int_{-\infty}^\tau F_{t\to h^{-1}\tau'} \bigl(\bar{\chi}_T(t)u_h(x,y,t)\bigr) \,d\tau'
\label{eqn-1.12}
\end{gather}
is the \emph{Tauberian expression}, $u_h(x,y,t)$ is the Schwartz kernel of the propagator $e^{ih^{-1}At}$, ${h\le T\le \epsilon _1}$, $\epsilon_1$ 
is a small constant and $\bar{\chi}_T(t)= \bar{\chi}(t/T)$ and $\bar{\chi}\in \sC_0^\infty ([-1,1])$, $\bar{\chi}(t)=1$ on $(-\frac{1}{2},\frac{1}{2})$.

Assume first that $B(0,1)\subset X$. Then for a small constant $T$ we can construct $u_h(x,y,t)$ as an oscillatory integral and we can construct $e_h^\T(x,y,\tau)$ even as $d=2$ or without strong convexity assumption. However the result could be simplified to $e^\W_h(x,y,\tau)$ in the framework of Theorem~\ref{thm-1.1}  (for scalar operators under strong convexity assumption).  This is done in Section~\ref{sect-2}.

\paragraph{Ideas of proofs. II.}
\label{sect-1.4}
Construction of the propagator is much more complicated close to the boundary, because there could be many generalized billiard rays with at least one reflection from $\partial X$,   from $x$ to $y$ on the energy level $\tau$. If
\begin{gather}
\ell(x,y) \coloneqq |x-y|+\eff(x)+\eff(y) \le \epsilon_1
\label{eqn-1.13}\\
\intertext{the length of all such rays is $\asymp \ell(x,y)$. Further, if} 
\eff(x)+\eff(y)\ge \sigma \ell (x,y)
\label{eqn-1.14}
\end{gather}
with $\sigma \ge C_0\ell(x,y)$ there is exactly  one such ray and it has exactly one reflection, and the  reflection  angle $\ge \epsilon_1 (\eff(x)+\eff(y))/\ell (x,y)$. It allows us in the framework of Theorem~\ref{thm-1.2} under assumption (\ref{eqn-1.14}) with $\sigma = h^{\frac{1}{2}-\delta}\ell(x,y)^{-\frac{1}{2}} + C_0\ell (x,y)$
  to construct a reflected wave as an oscillatory integral and simplify it. It will be done in Section~\ref{sect-5}.

So, we should consider a case when (\ref{eqn-1.14}) is violated and we will do it in Section~\ref{sect-4}. As $|x-y|\le Ch^{\frac{1}{2}-\delta}$ the propagator could be constructed by the standard method of successive approximations with unperturbed constant coefficients operator. 

Furthermore, in the framework of Theorem~\ref{thm-1.2} under certain assumptions, which are complementary to (\ref{eqn-1.14}) as $\ell\le h^{\frac{1}{3}+\delta}$, the reflected wave could be constructed also by  the method of the successive approximations, but with unperturbed the operator 
\begin{gather}
\bar{A}(z,hD_z)= h^2D_1^2 + A (0,w',hD_z'),
\label{eqn-1.15}
\end{gather}
where $x=(x_1; x')= (x_1;x_2,\ldots, x_d)$ and  $X\cap B(0,1)= \{x\colon x_1>0\}\cap B(0,1)$ and we make a change of variables $w'=\frac{1}{2}(x'+y')$ and $z'=\frac{1}{2}(x'-y')$, leaving $x_1$ and $y_1$ unchanged.

\paragraph{Ideas of proofs. III.}
\label{sect-1.5}
Finally, to estimate the Tauberian expression rather than to find its asymptotics we apply in Section~\ref{sect-3} only methods of propagations. Let us illustrate this away from the boundary. Let $Q_{1,2}$ be  pseudodifferential operators with the symbols equal $1$  in $\rho$-vicinity of $\{\xi\in \Sigma (w,\tau)\colon \nabla _\xi a(w,\xi)\parallel \bar{x}-\bar{y}\}$; this set consists of just two points. Then 
\begin{gather}
e_h^\T (\bar{x},\bar{y},\tau) = Q_{2x} e_h ^\T(x,y,\tau) \,^t\!Q_{1y} |_{x=\bar{x},y=\bar{y}}+ O(h^s)
\label{eqn-1.16}
\end{gather}
provided microlocal uncertainty principle $\rho\times \rho \ell \ge h^{1-\delta}$ is fulfilled. It allows us to upgrade  $|e^\T_h(x,y,\tau)|\le Ch^{1-d}\ell^{-1}(x,y)$ to
\begin{gather}
|e_h^\T (x,y,\tau)|\le C \rho^{d-1} h^{1-d}\ell^{-1}(x,y).
\label{eqn-1.17}
\end{gather}
 We need also $\rho \ge C_0\ell$.

\chapter{Asymptotics inside domain}
\label{sect-2}

\section{Tauberian asymptotics}
\label{sect-2.1}

As we already mentioned, asymptotics 
\begin{gather}
e_h(x,x,\tau)=\kappa_0(x,\tau)h^{-d}+O(h^{1-d})
\label{eqn-2.1}
\end{gather}
is known under much more general assumptions. It holds for matrix operators under $\xi$-microhyperbolicity condition at point $x$ on the energy level $\tau$ 
and for Schr\"odinger operator without any condition in dimension $d\ge 3$ (see \cite{monsterbook}, Sections~\ref{monsterbook-sect-5-4}  and~\ref{monsterbook-sect-5-3} correspondingly).

Asymptotics (\ref{eqn-2.1}) implies (\ref{eqn-1.9}) and then (\ref{eqn-1.10}). Then the standard Tauberian arguments (see f.e. \cite{monsterbook}, Section~\ref{monsterbook-sect-5-2})  imply (\ref{eqn-1.11})--(\ref{eqn-1.12}) with  small constant $T$. 

\section{Weyl asymptotics}
\label{sect-2.2}

While asymptotics (\ref{eqn-2.1}) holds in much more general assumptions, to construct $u_h(x,y,t)$ we need to impose some restrictions. Let $A$ be a scalar operator\footnote{\label{foot-3} Construction also works for matrix operators with characteristic roots of constant multiplicity.}. While construction of propagator as an oscillatory integral is possible even if $\xi$-microhyperbolicity condition is violated, we assume that it is fulfilled from the beginning, as we need it anyway.

Without any loss of the generality one can assume that 
\begin{gather}
a(y,0)< 0.
\label{eqn-2.2}\\
\intertext{Indeed, otherwise we can achieve it by a corresponding gauge transformation. Then the strong convexity of $\Sigma (y,\tau)=\{\theta\colon a(y,\theta)=\tau\}$ implies that}
\langle \nabla _\theta a(y,\theta),\theta\rangle \ge \epsilon_0\qquad \forall \theta\in \Sigma (y,\tau).
\label{eqn-2.3}
\end{gather}

\begin{proposition}\label{prop-2.1}
Let  $A$ be a scalar operator, $\xi$-microhyperbolic in  $B(\bar{x}, 2\epsilon )\subset X$ on energy level $\tau$ and let $x,y\in B(\bar{x}, \epsilon)$. Let
\textup{(\ref{eqn-2.3})} be fulfilled. Then 
\begin{gather}
u_h(x,y,t)\equiv  \int e^{ih^{-1}\Phi  (x,y,t,\theta)} B (x,y,t,\theta,h)\,d\theta
\label{eqn-2.4}\\
\intertext{modulo functions such that}
F_{t\to h^{-1}\tau'} \bar{\chi}_T(t)v_h(x,y,t) = O(h^s)\qquad \forall x, y\in B(0,\epsilon)
\label{eqn-2.5}\\
\intertext{as $T\le \epsilon_1$, $|\tau'-\tau|\le \epsilon_1$ where}
\Phi (x,y,t,\theta)  =\varphi  (x,y,\theta) +t  a(y,\theta),
\label{eqn-2.6}\\
\intertext{$\varphi  (x,y,\theta)$ satisfies stationary eikonal equation}
a(x, \nabla _x \varphi)= a(y,\theta)
\label{eqn-2.7}\\
\shortintertext{and}
\varphi  (x,y,\theta)|_{\langle x-y,\theta\rangle=0}=0,
\label{eqn-2.8}\\[2pt]
\nabla_x \varphi  (x,y,\theta)|_{x=y}= \theta
\label{eqn-2.9}\\
\shortintertext{and}
B(x,y,t,\theta,h)\sim \sum_{n\ge 0} B_{n} (x,y,t,\theta)h^n,
\label{eqn-2.10}
\end{gather}
and amplitudes $B_{jn}$ are defined from the Cauchy problems for transport equations; all functions are uniformly smooth; here and below $s$ is an arbitrarily large exponent.
\end{proposition}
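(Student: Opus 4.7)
The plan is to construct $u_h$ as a semiclassical Fourier integral operator parametrix via the standard geometric optics ansatz, and then to argue that the difference from the true propagator satisfies (\ref{eqn-2.5}).

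First, I would solve the Hamilton--Jacobi problem (\ref{eqn-2.7})--(\ref{eqn-2.9}) for the phase $\varphi$. For $y$ fixed and $\theta$ ranging over a conic neighborhood of $\Sigma(y,\tau)$, (\ref{eqn-2.7}) is a first-order Hamilton--Jacobi equation in $x$, and the hyperplane $\Sigma_0(y,\theta)=\{x\colon \langle x-y,\theta\rangle=0\}$ on which $\varphi$ must vanish is non-characteristic at $x=y$ precisely because of (\ref{eqn-2.3}): the bicharacteristic direction $\nabla_\xi a(y,\theta)$ has positive projection onto $\theta$, i.e., is transverse to $\Sigma_0$. The method of characteristics then produces a unique smooth $\varphi$ in a neighborhood of $(y,y,\theta)$ by flowing the initial covector along the Hamiltonian flow of $a$; for $\epsilon$ small enough no caustic appears, so $\varphi$ extends smoothly to $x,y\in B(\bar x,\epsilon)$ uniformly in $\theta$ in the given conic neighborhood.

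Next, I would plug (\ref{eqn-2.4}) into $(hD_t-A)u_h$. Since $\partial_t\Phi=a(y,\theta)$ and $\nabla_x\Phi=\nabla_x\varphi$, the principal symbol equation $\partial_t\Phi=a(x,\nabla_x\Phi)$ vanishes by (\ref{eqn-2.7}). At order $h^{n+1}$ one obtains a transport equation $\cL B_n=F_n(B_0,\ldots,B_{n-1})$ along the Hamiltonian flow, where $\cL$ has principal part $\partial_t+\nabla_\xi a(x,\nabla_x\varphi)\cdot\nabla_x$ plus a subprincipal contribution. Each $B_n$ is determined by Cauchy data on $\{t=0,\,x=y\}$, and these data are fixed by requiring $u_h|_{t=0}=\delta(x-y)$: since $\varphi(x,y,\theta)=\langle x-y,\theta\rangle+O(|x-y|^2)$ by (\ref{eqn-2.8})--(\ref{eqn-2.9}), the change of variable $\theta\mapsto\eta=\nabla_x\varphi(x,y,\theta)$ (which is a diffeomorphism for $x$ near $y$) converts the $t=0$ integral into an inverse Fourier transform in $\eta$; matching to $\delta(x-y)$ pins down $B_0|_{x=y,\,t=0}$ as the inverse Jacobian times $(2\pi h)^{-d}$, and solving the transport equations inductively determines all $B_n$.

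Borel summation then produces a genuine amplitude $B\sim\sum B_n h^n$, and the parametrix $\tilde u_h$ satisfies $(hD_t-A)\tilde u_h=O(h^\infty)$ and $\tilde u_h|_{t=0}=\delta(x-y)+O(h^\infty)$ in the smoothing sense on $B(\bar x,\epsilon)$. Duhamel's principle together with $L^2$-boundedness of $e^{ih^{-1}tA}$ implies $u_h-\tilde u_h=O(h^\infty)$ in the same sense, from which (\ref{eqn-2.5}) follows. The main obstacle is the global construction of $\varphi$ on the full region $x,y\in B(\bar x,\epsilon)$ with uniform smoothness: microhyperbolicity ($|\nabla_\xi a|\ne 0$ on $\Sigma$) combined with the transversality provided by (\ref{eqn-2.3}) ensures that the bicharacteristic projection out of $\Sigma_0$ remains a local diffeomorphism, and smallness of $\epsilon$ prevents focusing, so that $\varphi$ is defined smoothly throughout; once this is in hand, the remainder of the argument is standard FIO/transport calculus.
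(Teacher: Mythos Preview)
Your proposal is correct and follows exactly the classical H\"ormander FIO parametrix construction that the paper invokes: the paper's own proof consists of a single reference to Shubin~\cite{shubin:spectral}, Theorem~20.1, which is precisely the geometric-optics/transport-equation argument you have sketched. In effect you have written out the content of that citation, so there is nothing to add or correct.
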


\begin{proof}
The proof  is original L.~H\"ormander's construction (see M.~Shubin~\cite{shubin:spectral}, Theorem~20.1). 
\end{proof}

While in \cite{hoermander:spectral,shubin:spectral} this decomposition was used to calculate $e^\T_h (x,y,\tau)$ as $x=y$ (actually, it's classical variant), we will use it 
without this restriction. Due to $\xi$-microhyperbolicity 
\begin{multline}
F_{t\to h^{-1}\tau'} \bigl(\bar{\chi}_T(t) u_h(x,y,t)\bigr)\\
\sim (2\pi h)^{-d} h \int_{\Sigma (y,\tau')}  e^{i h^{-1} \varphi (x,y,\theta) } B'(x,y,   \theta)\, d\theta:da(y,\theta)
\label{eqn-2.11}
\end{multline}
with $\Sigma (y,\tau)= \{\theta\colon a(y,\theta)=0\}$, $d\theta:da(y,\theta)$ a natural density on $\Sigma (y,\tau')$ and $B'(x,y,\theta)$ also allowing (\ref{eqn-2.10}) decomposition with uniformly smooth $B'_n$.

\begin{proposition}\label{prop-2.2}
Let in $B(\bar{x},2\epsilon_1)$ both $\xi$-microhyperbolicity and strong convexity conditions are fulfilled on energy level $\tau$. Then for  $|\tau'-\tau|\le \epsilon_1$ and $x\ne y$
$\varphi (x,y,\theta)$ has exactly two stationary points $\theta^*_{\pm}(x,y,\tau')$  on $\Sigma (y,\tau')$,  defined from
\begin{align}
\nabla_\theta \varphi (x,y,\theta)=- t_\pm \nabla_\theta a (y,\theta), && a(y,\theta)=\tau', && \pm t_\pm >0.
\label{eqn-2.12}
\end{align}
These points are non-degenerate and  $t_\pm\asymp |x-y|$ in \textup{(\ref{eqn-2.12})}.

Furthermore,
\begin{gather}
\theta^*_{\pm}(x,y,\tau')=\bar{\theta}_{\pm}(x,y,\tau')+O(|x-y|),
\label{eqn-2.13}\\
\intertext{where $\bar{\theta}_{\pm}(x,y,\tau')$ are defined from}
x-y=- t'_\pm \nabla_\theta a (y,\theta), \qquad a(y,\theta)=\tau', \quad \pm t'_\pm >0
\label{eqn-2.14}
\end{gather}
and also $t'_\pm =t_\pm + O(|x-y|^2)$.
\end{proposition}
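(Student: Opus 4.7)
The plan is to reduce the statement to a small perturbation of the constant-coefficient model (in which $\varphi$ is replaced by $\langle x-y,\theta\rangle$) via Taylor expansion in $x$ around $y$, then solve the model by strict convexity of $\Sigma(y,\tau')$, and finally obtain existence, non-degeneracy and the error estimates for the perturbed problem by the implicit function theorem. Concretely, the initial conditions \textup{(\ref{eqn-2.8})}--\textup{(\ref{eqn-2.9})} give
\begin{gather*}
\varphi(x,y,\theta)=\langle x-y,\theta\rangle+\psi(x,y,\theta),\qquad \psi=O(|x-y|^2),
\end{gather*}
with $\psi$ smooth (only the two boundary conditions enter here, not yet the eikonal equation). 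Differentiating in $\theta$, one gets $\nabla_\theta\varphi=(x-y)+O(|x-y|^2)$ and $\Hess_\theta\varphi=O(|x-y|^2)$, so the Lagrange-multiplier form of the critical-point condition for $\varphi(x,y,\cdot)|_{\Sigma(y,\tau')}$ is precisely \textup{(\ref{eqn-2.12})}, and to leading order it becomes \textup{(\ref{eqn-2.14})}.

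For the model equation $x-y=-t'\nabla_\theta a(y,\theta)$ on $\Sigma(y,\tau')$, geometrically one is looking for the points of the strictly convex closed hypersurface $\Sigma(y,\tau')$ whose outward conormal is parallel to $\pm(x-y)$, and by strict convexity there are exactly two, giving $\bar\theta_\pm$; the corresponding $t'_\pm=\pm|x-y|/|\nabla_\theta a(y,\bar\theta_\pm)|$ satisfy $\pm t'_\pm>0$, and $\xi$-microhyperbolicity combined with \textup{(\ref{eqn-2.3})} keeps $|\nabla_\theta a|$ pinched between two positive constants on $\Sigma$, so $t'_\pm\asymp|x-y|$. For the full equation \textup{(\ref{eqn-2.12})} I would now apply the implicit function theorem near each $(\bar\theta_\pm,t'_\pm)$ to the map $(\theta,t)\mapsto(-t\nabla_\theta a(y,\theta)-(x-y),\,a(y,\theta)-\tau')$; the Jacobian has a bordered block form whose non-singularity reduces to invertibility of $t'_\pm\Hess_\theta a|_{T_{\bar\theta_\pm}\Sigma}$, guaranteed by strict convexity of $\Sigma$ together with $t'_\pm\ne 0$. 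Non-degeneracy of $\theta^*_\pm$ as a critical point of $\varphi|_\Sigma$ then follows from the fact that the restricted Hessian equals $t_\pm\,II_\Sigma+O(|x-y|^2)$, which is sign-definite of size $\asymp|x-y|$.

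The sharper estimates \textup{(\ref{eqn-2.13})} and $t_\pm=t'_\pm+O(|x-y|^2)$ I would obtain by subtracting the model equation from \textup{(\ref{eqn-2.12})} and linearizing in $\delta\theta=\theta^*_\pm-\bar\theta_\pm$, $\delta t=t_\pm-t'_\pm$. Splitting the resulting linear system into components normal and tangential to $\Sigma$ at $\bar\theta_\pm$, the tangential component gives $t'_\pm\,\Hess_\theta a|_{T\Sigma}\,\delta\theta=O(|x-y|^2)$, whence $\delta\theta=O(|x-y|^2/t'_\pm)=O(|x-y|)$, after which the normal component yields $\delta t=O(|x-y|^2)+O(t'_\pm|\delta\theta|)=O(|x-y|^2)$. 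The one slightly delicate point is precisely this asymmetry between the two estimates: the loss of one power in $\delta\theta$ is unavoidable because the tangential Hessian carries a factor $t'_\pm\asymp|x-y|$, and I expect this linearization bookkeeping to be the only step that requires genuine care rather than routine verification.
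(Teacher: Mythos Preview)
Your argument is correct and is precisely the approach the paper takes: the paper's proof consists of the single observation (\ref{eqn-2.15}), namely $\varphi(x,y,\theta)=\langle x-y,\theta\rangle+O(|x-y|^2)$, from which everything is declared to follow trivially. You have simply written out in full the perturbation-off-the-model and implicit-function-theorem details that the paper leaves implicit.
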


\begin{proof}
Proof follows trivially from 
\begin{gather}
\varphi (x,y,\theta)= \langle x-y, \theta\rangle + O(|x-y|^2),
\label{eqn-2.15}
\end{gather}
which follows from (\ref{eqn-2.8}), (\ref{eqn-2.9}).
\end{proof}

\begin{proposition}\label{prop-2.3}
In the framework of Proposition~\ref{prop-2.2} the following asymptotics hold 
\begin{gather}
F_{t\to h^{-1}\tau} \bigl(\bar{\chi}_T(t) u_h(x,y,t)\bigr)\sim (2\pi )^{-d} h^{\frac{1-d}{2}}
\sum_{\varsigma = \pm}  e^{ih^{-1}S_\varsigma (x,y,\tau)} B'_\varsigma (x,y,\tau)
\label{eqn-2.16}\\
\intertext{and with $c\ell\le T\le \epsilon$ }
e^\T_{T,h}(x,y,\tau)\sim (2\pi )^{-d} h^{\frac{1-d}{2}}
\sum_{\varsigma = \pm}  e^{ih^{-1}S_\varsigma (x,y,\tau)} B''_\varsigma (x,y,\tau)
\label{eqn-2.17}\\
\shortintertext{with}
S_\varsigma (x,y,\tau)= \varphi (x,y,\theta ^*_\varsigma(x,y,\tau))
\label{eqn-2.18}
\end{gather}
and $B'_\varsigma$, $B''_\varsigma$   also decomposed into asymptotic series albeit with 
$B'_{\varsigma n} \ell ^{n+\frac{d-1}{2}}$ and $B''_{\varsigma n} \ell ^{n+\frac{d+1}{2}}$uniformly smooth.
\end{proposition}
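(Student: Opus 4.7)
My plan is to derive both (2.16) and (2.17) by applying the method of stationary phase to the oscillatory integral in (2.11), together with an integration by parts in $\tau'$ for the Tauberian integral (1.12).

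First, I would fix $\tau'$ close to $\tau$ and view (2.11) as a $(d-1)$-dimensional oscillatory integral over $\Sigma(y,\tau')$ with large parameter $h^{-1}$, phase $\varphi(x,y,\theta)$, and amplitude $B'(x,y,\theta)$ admitting the expansion (2.10). By Proposition~\ref{prop-2.2}, when $x\neq y$ the phase has exactly the two non-degenerate stationary points $\theta^*_{\pm}(x,y,\tau')$ determined by (2.12), and away from these points integration by parts gives an $O(h^s)$ contribution. The standard stationary phase formula applied at each of $\theta^*_{\pm}$ produces a factor $(2\pi h)^{(d-1)/2}|\det \Hess_{\Sigma}\varphi|^{-1/2}$ together with the phase $\exp(ih^{-1}S_\varsigma)$ from (2.18). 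Combining with the prefactor $(2\pi h)^{-d}h$ of (2.11) yields exactly $(2\pi)^{-d} h^{(1-d)/2}$, and collecting the standard stationary phase corrections into an asymptotic series in $h$ gives the amplitudes $B'_\varsigma(x,y,\tau)$ of (2.16).

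To obtain the $\ell$-scaling of the coefficients, I would use that $\varphi(x,y,\theta)=\langle x-y,\theta\rangle+O(|x-y|^2)$ by (2.15), so the restricted Hessian $\Hess_{\Sigma}\varphi$ at $\theta^*_\varsigma$ scales like $\ell$. Hence $|\det \Hess_{\Sigma}\varphi|^{-1/2}\asymp \ell^{-(d-1)/2}$, and each subsequent term in the stationary phase expansion involves an additional inverse Hessian and therefore an extra factor $\ell^{-1}$. This gives $B'_{\varsigma n}\asymp \ell^{-n-(d-1)/2}$, i.e. $B'_{\varsigma n}\,\ell^{n+(d-1)/2}$ is uniformly smooth, as claimed.

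For (2.17), I would insert (2.16) into the Tauberian expression (1.12) and compute $h^{-1}\int_{-\infty}^\tau e^{ih^{-1}S_\varsigma(x,y,\tau')}B'_\varsigma(x,y,\tau')\,d\tau'$ by integration by parts in $\tau'$. Differentiating (2.12) shows $\partial_{\tau'}S_\varsigma=-t_\varsigma\asymp \ell$ does not vanish for $x\neq y$, so each integration by parts converges and produces an additional factor $h/(\partial_{\tau'}S_\varsigma)\sim h/\ell$. The lower endpoint contribution and the contribution of the cutoff $\bar\chi_T$ are $O(h^s)$ because $\partial_{\tau'}S_\varsigma\neq 0$ and $T\ge c\ell$ keeps us away from $t_\varsigma$; only the boundary term at $\tau'=\tau$ survives. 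The resulting coefficients $B''_\varsigma$ equal $B'_\varsigma/(it_\varsigma)$ to leading order, gaining an extra factor $\ell$ compared with $B'_\varsigma$, which gives the stated smoothness of $B''_{\varsigma n}\,\ell^{n+(d+1)/2}$.

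The main technical point, and the only nontrivial bookkeeping, is the uniformity in $\ell$ as $\ell\to 0$: the two critical points $\theta^*_\pm$ coalesce at $x=y$ and the restricted Hessian degenerates, so one must keep careful track of how the derivatives of $\varphi$ and $B'$ entering the stationary-phase coefficients depend on $\ell$. This is handled by parametrizing $\theta$ near $\theta^*_\varsigma$ using (2.13)--(2.14), which express both stationary points and Hessians as smooth functions of $(x-y)/\ell$ with $\ell$ appearing explicitly, so that all estimates and the asymptotic series become smooth in $(x,y,\tau)$ after factoring out the indicated powers of $\ell$.
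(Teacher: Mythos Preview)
Your argument for (2.16) is essentially the paper's: stationary phase on the integral (2.11) with effective semiclassical parameter $\hbar=h\ell^{-1}$, using Proposition~\ref{prop-2.2} for the non-degenerate critical points. Your tracking of the $\ell$-powers via the size of the restricted Hessian is exactly the content of ``effective parameter $h\ell^{-1}$''.

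For (2.17) your route differs slightly from the paper's and has one gap. The paper does not insert the already-expanded (2.16) into (1.12) and integrate by parts in $\tau'$; instead it splits the $\tau'$-integral by a partition $\phi_1+\phi_2$ with $\supp(\phi_1)\subset(\tau-\epsilon,\tau+\epsilon)$ and $\supp(\phi_2)\subset(-\infty,\tau-\epsilon/2)$. On the $\phi_1$-piece it plugs the oscillatory integral (2.4) directly and applies stationary phase in $(t,\theta)$, observing that there are no stationary points in the interior $\{\tau-\epsilon<a(y,\theta)<\tau\}$, so only the boundary $a(y,\theta)=\tau$ contributes. On the $\phi_2$-piece it invokes the mollification argument of Proposition~\ref{prop-3.5}: since $\phi_2$ smooths in $\tau'$ on scale $\asymp 1$ and $\ell^0(x,y)\ge h^{1-\delta}$, that piece is $O(h^s)$ outright.

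Your integration-by-parts approach on the $\phi_1$-piece is equivalent and arguably cleaner, but your justification for discarding the ``lower endpoint'' is not right: the expansion (2.16) is only valid for $|\tau'-\tau|\le\epsilon_1$, so you cannot appeal to $\partial_{\tau'}S_\varsigma\ne 0$ for $\tau'$ far below $\tau$. You need exactly the $\phi_1/\phi_2$ split and the separate energy-localization argument (Proposition~\ref{prop-3.5} or the equivalent finite-propagation-speed statement) to kill the $\phi_2$-piece. Once that is added your argument is complete and matches the paper's.
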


\begin{proof}
Asymptotics (\ref{eqn-2.17})  follows from Proposition~\ref{prop-2.2} and stationary phase principle with effective semiclassical parameter $\hbar =h\ell^{-1}$, $\ell =|x-y|+h$. 

To prove (\ref{eqn-2.18}) we first rewrite (\ref{eqn-1.12}) as  the sum of two integrals with cut-off functions $\phi _1(\tau')$ and $\phi_2)(\tau')$; 
$\phi_j\in \sC_0^\infty$, $\supp (\phi_1 )\subset (\tau-\epsilon, \tau+\epsilon)$, $\supp(\phi_2)\subset (-\infty, \tau-\epsilon/2)$. In the integral with $\phi_1$ we plug (\ref{eqn-2.4}), observe that there are no stationary points in $\{\theta\colon \tau-\epsilon<a(y,\theta)<\tau\}$ and use Proposition~\ref{prop-2.2} and stationary phase principle with effective semiclassical parameter $\hbar =h\ell^{-1}$. In the integral with $\phi_2$ we simply observe that it has a complete asymptotics because it contains a mollification by $\tau'$ (see, f.e. Proposition~\ref{prop-3.5} below).
\end{proof}

\begin{proposition}\label{prop-2.4}
In the framework of Proposition~\ref{prop-2.2} 
\begin{enumerate}[wide,label=(\roman*),  labelindent=0pt]
\item
The following estimates hold:
\begin{gather}
|F_{t\to h^{-1}\tau} \Bigl(\bar{\chi}_T (t)u_h(x,y,t)\Bigr) |\le Ch^{\frac{1-d}{2}}\ell^{\frac{1-d}{2}}
\label{eqn-2.19}
\intertext{and as $c\ell\le T\le \epsilon$ }
|e^\T_{T,h} (x,y,\tau)|\le Ch^{\frac{1-d}{2}}\ell^{\frac{-1-d}{2}}.
\label{eqn-2.20}
\end{gather}
\item
The following estimate holds as $\max(c\ell,\,h^{1-\delta})\le T\le \epsilon$:
\begin{gather}
|e^\T_{T,h} (x,y,\tau)-e^\W_h(x,y,\tau) |\le  Ch^{1-d}.
\label{eqn-2.21}
\end{gather}
\end{enumerate}
\end{proposition}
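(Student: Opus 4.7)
Part (i) is a direct consequence of Proposition~\ref{prop-2.3}. In (\ref{eqn-2.16}) the stated uniform smoothness of $B'_{\varsigma n}\ell^{n+(d-1)/2}$ gives $|B'_{\varsigma 0}|\le C\ell^{-(d-1)/2}$, while subsequent terms carry extra factors $h/\ell\le C$; summing the two stationary-phase contributions yields (\ref{eqn-2.19}). Estimate (\ref{eqn-2.20}) follows identically from (\ref{eqn-2.17}) using $|B''_{\varsigma 0}|\le C\ell^{-(d+1)/2}$.

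For part (ii) the plan is to apply stationary phase directly to $e^\W_h(x,y,\tau)$, given by (\ref{eqn-1.3}) with $w=\frac12(x+y)$, and compare the result with (\ref{eqn-2.17}). First I would use $\xi$-microhyperbolicity to rewrite the volume integral $\int_{a(w,\xi)<\tau}$ as a surface integral over $\Sigma(w,\tau)=\{a(w,\xi)=\tau\}$ via integration by parts along $\nabla_\xi a$; the contribution as $a\to-\infty$ is negligible modulo smoothing. Strong convexity then supplies exactly two non-degenerate critical points $\xi^*_\pm$ of $\langle x-y,\xi\rangle$ on $\Sigma(w,\tau)$; by Proposition~\ref{prop-2.2} these coincide with $\theta^*_\pm$ modulo $O(|x-y|)$, with critical values $S_\pm$ modulo $O(|x-y|^2\ell^{-1})$, both absorbable into the amplitude expansion. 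Stationary phase then produces for $e^\W_h$ an asymptotic of exactly the same structural form as (\ref{eqn-2.17}), so matching the two expansions and cancelling the leading terms is what remains.

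The crucial and main-obstacle step is to verify that the leading amplitudes $B^\W_{\pm 0}$ produced by this stationary phase on the Weyl integral agree with the $B''_{\pm 0}$ of Proposition~\ref{prop-2.3}. Both can be identified with the Liouville density $d\xi{:}da$ on $\Sigma(w,\tau)$ at $\xi^*_\pm$, weighted by the reciprocal square root of the normal Gaussian curvature of $\Sigma$ in the direction $x-y$: for $B^\W_{\pm 0}$ this is direct from the stationary-phase Jacobian, while for $B''_{\pm 0}$ it is the classical identification of the principal symbol of the spectral projector with the Liouville measure on the energy surface, obtained from the leading transport equation for $B_0$ in Proposition~\ref{prop-2.1} together with (\ref{eqn-2.11}). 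The bookkeeping is delicate because $e^\W_h$ is built around the symmetric base point $w=\frac12(x+y)$ while the propagator construction of Proposition~\ref{prop-2.1} is asymmetric (based at $y$), so agreement holds only at the principal-symbol level and requires tracking of half-density normalisations and of subprincipal corrections that cancel on passing from $y$ to $w$. After this cancellation the residual consists of subleading stationary-phase terms of order $h^{(3-d)/2}\ell^{-(d-1)/2}\le Ch^{1-d}$ whenever $\ell\gtrsim h$, a Tauberian-type mollification error of size $O(T^{-1}h^{1-d})=O(h^{\delta+1-d})$ controlled precisely by the hypothesis $T\ge h^{1-\delta}$, and a small-$\ell$ regime ($\ell\lesssim h$) in which Taylor expansion of $e^{ih^{-1}\langle x-y,\xi\rangle}$ reduces both $e^\W_h$ and $e^\T_{T,h}$ to the diagonal asymptotic $h^{-d}\kappa_0(w,\tau)+O(h^{1-d})$.
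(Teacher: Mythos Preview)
Your Part~(i) agrees with the paper's one-line derivation from (\ref{eqn-2.16})--(\ref{eqn-2.17}).

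For Part~(ii) your route diverges from the paper's and contains a genuine gap. The paper does \emph{not} compare stationary-phase expansions of $e^\T_{T,h}$ and $e^\W_h$. Instead it splits on $\ell$: for $d\ge 3$ and $\ell\ge h^{1/2}$, estimate (\ref{eqn-2.20}) already gives \emph{both} $|e^\T_{T,h}|$ and $|e^\W_h|$ bounded by $Ch^{1-d}$ separately, so there is nothing to match. For $\ell\le h^{1/2}$ the paper works directly with the oscillatory integral (\ref{eqn-2.4}), replacing the phase $\Phi$ by $\bar\Phi=\langle x-y,\theta\rangle+ta(\tfrac12(x+y),\theta)$ and the amplitude $B$ by $1$; by (\ref{eqn-2.15}) these replacements cost factors $\ell^2h^{-1}$ and $\ell$ against (\ref{eqn-2.20}), both yielding $O(h^{1-d})$, and with $\bar\Phi$, $B=1$ one gets exactly $e^\W_h$. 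For $d=2$ this fails in the range $h^{1/2}\lesssim\ell\lesssim h^{1/3}$ and the paper defers to Proposition~\ref{prop-5.13}(i).

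Your term-by-term matching program breaks down because the $n=1$ contributions in (\ref{eqn-2.17}) are of size $h^{(3-d)/2}\ell^{-(d+3)/2}$, not $h^{(3-d)/2}\ell^{-(d-1)/2}$ as you write (each stationary-phase correction costs $h/\ell$, not $h$). For $d=3$ this is $h^0\ell^{-3}$, which is \emph{not} $O(h^{-2})$ once $\ell\le h^{2/3}$, so cancelling only the leading terms is insufficient; you would have to match all orders. Also, your phase-discrepancy estimate $S_\pm-\langle x-y,\xi^*_\pm\rangle=O(|x-y|^2\ell^{-1})=O(\ell)$ is off: (\ref{eqn-2.15}) gives $O(\ell^2)$, and an $O(\ell)$ phase error would contribute $(\ell/h)\times h^{(1-d)/2}\ell^{-(d+1)/2}$, which at $\ell\asymp h$ is $h^{-d}$, not absorbable. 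The paper's direct phase-and-amplitude replacement in the integrand avoids all of this bookkeeping, and you have not addressed the $d=2$ case at all.
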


\begin{proof}
\begin{enumerate}[wide,label=(\alph*),  labelindent=0pt]
\item
Estimates (\ref{eqn-2.19})--(\ref{eqn-2.20}) follow from (\ref{eqn-2.16})--(\ref{eqn-2.17}).
\item
To prove Statement (ii)  for $d\ge 3$ one can observe that both $|e^\T_{T,h} (x,y,\tau|$ and $|e^W(x,y,\tau)|$ do not exceed $Ch^{1-d}$ as $h^{\frac{1}{2}}\le \ell\le \epsilon$, 
while for $\ell\le h^{\frac{1}{2}}$, due to  (\ref{eqn-2.15}), replacing $\Phi (x,y,t,\theta)$ by 
\begin{gather}
\bar{\Phi}(x,y,\theta)=\langle x-y,\theta\rangle  + t a(\frac{1}{2}(x+y),\theta)
\label{eqn-2.22}
\end{gather}
brings an error not exceeding
the right-hand expression of (\ref{eqn-2.20}) multiplied by $C\ell^2h^{-1}$, that is $Ch^{-\frac{1+d}{2}}\ell ^{\frac{3-d}{2}}\le Ch^{1-d}$.

Moreover, if we also replace $B(x,y,t,\theta,h)$ by $1$ then the error will not exceed the right-hand expression of (\ref{eqn-2.20}) multiplied by $C\ell $, that  is
$Ch^{\frac{1-d}{2}}\ell ^{\frac{1-d}{2}}\le Ch^{1-d}$. 

Finally, with this choice of the phase function $\bar{\Phi}(x,y,t,\theta)$ and amplitude $B=1$,  we get $e^\T_{T,h}(x,y,\tau) =e^\W(x,y,\tau) + O(h^\infty)$.

\item
As $d=2$ we need more subtle arguments (which would also work for $d\ge 3$). Indeed, estimate (\ref{eqn-2.20}) implies that  $ |e^\T_{T,h} (x,y,\tau)|\le Ch^{-1}$ as 
$h^{\frac{1}{3}}\le \ell\le \epsilon_1$ and  estimate (\ref{eqn-2.21}) leads to $O(h^{-1})$ error only as $\ell \asymp h$. 

We refer to Proposition~\ref{prop-5.13}(i).
\end{enumerate}
%\vspace{-\baselineskip}
\end{proof}

Finally, Theorem~\ref{thm-1.1}  follows from estimate (\ref{eqn-1.11}) and Proposition~\ref{prop-2.4}.

\section{Improvements and generalizations}
\label{sect-2.3}

\paragraph{Two-term asymptotics.}
We know that  if $A$ is a scalar operator on the closed manifold, $\xi$-microhyperbolic at point $x$ and if \emph{non-looping condition}
\begin{gather}
\upmu_{x,\tau} \{ \xi \in \Sigma (x,\tau) \colon \exists t\ne 0, \uppi _x e^{tH_a (x,\xi)} =x\}=0
\label{eqn-2.23}\\
\intertext{is fulfilled at point $x$ then two-term asymptotics}
e_h(x,x,\tau)=\kappa_0(x,\tau)h^{-d}+\kappa_1(x,\tau)h^{1-d} + o(h^{1-d})
\label{eqn-2.24}
\end{gather}
holds; here $\Sigma (x,\tau)= \{\xi\colon a(x,\xi)=\tau\}$, $ \upmu_{x,\tau}$   is a natural measure on $\Sigma (x,\tau)$, corresponding to a density $dxd\xi :d_\xi a(x,\xi)$, $H_{a}$ is a Hamiltonian field generated by $a$ and $e^{tH_a}$ is a Hamiltonian flow (see, f.e. \cite{monsterbook}, Section~\ref{monsterbook-sect-5-3}). Can we get a two-term  asymptotics for $e_h(x,y,\tau)$? 
\enlargethispage{\baselineskip}

First of all, we need to improve Tauberian estimate (\ref{eqn-1.11}). Asymptotics (\ref{eqn-2.24}) at point $x$ implies
\begin{gather}
|e(x,x, \tau+T^{-1} h)-e(x,x, \tau')|\le CT^{-1}h^{1-d} +o_T(h^{1-d})
\label{eqn-2.25}
\end{gather}
and then
\begin{multline*}
|e(x,y, \tau )-e(x,y, \tau')|\\
\le C|\tau -\tau'| h^{-d}+C|\tau -\tau'|^{\frac{1}{2}}h^{\frac{1}{2}-d} + CT^{-\frac{1}{2}}h^{1-d}+o_T(h^{1-d})
\end{multline*}
with arbitrarily large $T$. Here condition (\ref{eqn-2.23}) is fulfilled at one of  points $x$, $y$. Then the standard Tauberian arguments imply
\begin{gather*}
|e_h(x,y, \tau)-e_{T,h}^\T (x,y, \tau) |\le CT^{-\frac{1}{2}}h^{1-d} + o_T(h^{1-d})
\end{gather*}
and propagation results combined with non-looping condition at one of points $x$, $y$ imply that in the left hand expression one can replace $T$ by  $T'=T(x,y)$ while still having arbitrary  $T$ in the right hand-expression, and then we arrive to
\begin{gather}
e_h(x,y, \tau) = e_{T,h}^\T (x,y, \tau) +o(h^{1-d})\qquad\text{with\ \ } T=T(x,y).
\label{eqn-2.26}
\end{gather}
Further, if we add another condition
\begin{gather}
\upmu_{x,\tau} \{ \xi \in \Sigma (x,\tau) \colon \exists t\ne 0, \uppi _x e^{tH_a (x,\xi)} =y\}=0
\label{eqn-2.27}
\end{gather}
or a similar condition, obtained by permutation of $x$ and $y$ (let's call it $(\ref{eqn-2.26})'$), then from (\ref{eqn-2.26}) and propagation results we obtain that
\begin{enumerate}[label =(\alph*), wide]
\item 
If $\ell (x,y)\ge \epsilon$ then $e_h(x,y,\tau)=o(h^{1-d})$,
\item
If $\ell(x,y)\le \epsilon$, then we can take $T(x,y)=c\ell (x,y)$ in (\ref{eqn-2.26}).
\end{enumerate} 

\paragraph{Weyl asymptotics.}
In particular, in the framework of Theorem~\ref{thm-1.1}, under extra assumptions (\ref{eqn-2.23}) and either (\ref{eqn-2.26}) or
$(\ref{eqn-2.26})'$, we can apply the machinery developed in Subsection~\ref{sect-2.2} (with the only exception $d=2$ and neither $\ell \gg h^{\frac{1}{3}}$ nor $\ell \ll h^{\frac{1}{3}}$). One can prove easily that then 
\begin{gather}
\bigl(e(x,y,\tau)=e^\W (x,y,\tau)\bigr)+o(h^{1-d}) \qquad  \forall x,y\in B(0,\epsilon )
\label{eqn-2.28}
\end{gather}
where 
$e_h^\W (x,y, \tau)$  is defined by (\ref{eqn-1.3}) with $a$ replaced by $a+ha_1$ where $a_1$ is the subprincipal symbol of $A$, that is with the main term defined by (\ref{eqn-1.3}) and with the second term
\begin{gather}
-(2\pi h)^{1-d}\int _{\Sigma (z, \tau)} a_{1}(z,\theta) e^{i\langle x-y,\theta\rangle}\,d\theta :d_{\theta} a(z,\theta),\qquad z=\frac{1}{2}(x+y).
\label{eqn-2.29}
\end{gather}

\paragraph{Generalizations.}
\begin{enumerate}[wide,label=(\roman*),  labelindent=0pt]
\item
We can consider the case when $a(x,\xi)$ has characteristic roots $\lambda_j(x,\xi)$ of constant multiplicity; and we need only to assume that it happens 
as $|\lambda_j (x,\xi)-\tau|\le \epsilon$.
\item
If we consider only $y\colon y-x \in \Gamma$ where $\Gamma$ is a cone in $\bR^d$ with a vertex at $0$, then in virtue of propagation results (see \cite{monsterbook}, Section~\ref{monsterbook-sect-2-2}) we need the previous assumption only for $\xi$ such that
\begin{gather}
\Gamma \cap (K (x,\xi) \cup -K(x,\xi))\ne \emptyset
\label{eqn-2.30}
\end{gather}
where $K(x,\xi)$ is a cone dual to $K'(x,\xi)$ which is a connected component of 
\begin{gather*}
\{\eta \colon ((\eta \cdot \partial_\xi a(x,\xi))v,v)\ge 0\ ; \forall v\colon \|(a-\tau) v\|\le \epsilon \|v\|\}.
\end{gather*}
\item
Also, instead of  strong convexity we need to assume only that at points $\xi$ satisfying (\ref{eqn-2.30}), the matrix of the curvatures of the surface
$\{\xi \colon \lambda (x,\xi)=\tau\}$ has at least two eigenvalues, disjoint from $0$. Then it would be similar to the case $d\ge 3$ in Proposition~\ref{prop-2.4}. Probably, it would suffice to have just one eigenvalue, to recover more delicate arguments of the case $d=2$ in Proposition~\ref{prop-2.4}. 
\end{enumerate}

\chapter{Microlocal methods}
\label{sect-3}

\section{Propagation}
\label{sect-3.1}
Recall that we consider Schr\"odinger operator (\ref{eqn-1.4}) and for such operator under $\xi$-microhyperbolicity near boundary condition on the energy level $\tau$ instead of on-diagonal asymptotics (\ref{eqn-2.1}) asymptotics with a boundary-layer type term and remainder estimate $O(h^{1-d})$ holds; see \cite{monsterbook}, Section~\ref{monsterbook-sect-8-1}. In particular, under Dirichlet or Neumann boundary conditions (and we consider only those for simplicity) asymptotics (\ref{eqn-1.5}) holds as $x=y$ and also (\ref{eqn-1.9}) holds. Then (\ref{eqn-1.10})--(\ref{eqn-1.12}) hold and all we need is to rewrite  Tauberian expression (\ref{eqn-1.12}) in more explicit terms.

Let us   decompose
\begin{gather}
u_h(x,y,t)= u_h^0 (x,y,t) + u_h^1 (x,y,t)
\label{eqn-3.1}\\
\intertext{where $u_h^0 (x,y,t)$ is the solution for the ``free space'' and $u_h^1(x,y,t)$ is a reflected wave, satisfying}
hD_t u_h^1 =A(x,hD,h)u_h ^1,
\label{eqn-3.2}\\
B(x,hD,h) u_h^1 |_{\partial X} = -B(x,hD,h) u_h^0 |_{\partial X},
\label{eqn-3.3}
\end{gather}
where $B(x,hD,h)$ is a boundary operator. Then
\begin{gather}
e^\T_h(x,y,\tau)= e^{0,\T}_h(x,y,\tau)+ e^{1,\T}_h(x,y,\tau),
\label{eqn-3.4}
\end{gather}
where $e^{0,\T}_h(x,y,\tau)$ and $e^{1,\T}_h(x,y,\tau)$ are derived by Tauberian expression  (\ref{eqn-1.12}) from  $u_h^0 (x,y,t) $ and $u^1 (x,y,t)$ correspondingly
with $T=\epsilon_1$.

While $u^0_h(x,y,t)$ can be constructed as an oscillatory integral, our purpose is to construct $u^1_h(x,y,t)$  in some way.
To do this, in this section  we apply the improved method of successive approximations near boundary, in the same way as the standard method was applied in \cite{monsterbook}, Section~\ref{monsterbook-sect-7-2} while in the next Section~\ref{sect-5} we use the geometric optics method. 

Recall that now $A$ is a second order elliptic operator and
$X\cap B(0,1)= \{x\colon \eff(x)>0\}\cap B(0,1)$ with $|\nabla \eff|\ge \epsilon _0$. Without any loss of the generality one can assume that 

\begin{claim}\label{eqn-3.5}
$X\cap B(0,1)=\{x\colon x_1>0\}\cap B(0,1)$ with  $V_1(x)=0$,  $g^{1k}=\updelta_{1k}$ 
\end{claim}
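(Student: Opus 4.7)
\textbf{Proof plan for (\ref{eqn-3.5}).} The normalization is achieved in two separate reductions: first a change of coordinates (which straightens $\partial X$ and simultaneously produces $g^{1k}=\updelta_{1k}$), followed by a gauge transformation (which eliminates $V_1$). Both preserve all the structural hypotheses on $A$.

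\emph{Step 1: boundary-normal coordinates.} The plan is to use Fermi coordinates associated with the dual metric $g_{jk}$ on a neighborhood of $0\in\partial X$. Pick a local $\sC^K$-chart $\sigma\colon\{y_1=0\}\cap B(0,\epsilon')\to \partial X$ of the boundary near $0$, let $\nu(y')$ denote the inward $g_{jk}$-unit normal at $\sigma(y')$, and define
\begin{gather*}
\Psi\colon (y_1,y')\mapsto \gamma_{y'}(y_1),
\end{gather*}
where $\gamma_{y'}$ is the $g$-geodesic starting at $\sigma(y')$ with initial velocity $\nu(y')$. Since $\partial X\in \sC^K$ and the coefficients $g^{jk}$ are $\sC^K$, the map $\Psi$ is a $\sC^K$-diffeomorphism of a neighborhood of $0$ onto its image, with $\Psi(\{y_1=0\})=\partial X$ and $\Psi(\{y_1>0\})=X$ locally. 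Renaming these new coordinates back to $x$, the gradient of $x_1$ is the $g$-unit conormal to the level hypersurfaces $\{x_1=\const\}$, which in the new coordinates means $g^{11}=1$; the condition that $\partial_1$ be $g$-orthogonal to $\partial_k$ (built into the Fermi construction) gives $g^{1k}=0$ for $k\ge 2$. Thus $g^{1k}=\updelta_{1k}$ after the change of variables. The operator $A$ remains second-order elliptic with the same form (\ref{eqn-1.4}), with transformed $\sC^K$-coefficients and with Dirichlet/Neumann boundary conditions preserved (the Neumann condition in the new coordinates reduces to the natural conormal derivative relative to the new $g$).

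\emph{Step 2: gauge transformation.} To kill $V_1$ set
\begin{gather*}
\phi(x_1,x')\coloneqq \int_0^{x_1} V_1(s,x')\,ds,
\end{gather*}
so $\phi$ is $\sC^K$ (one order lost from $V_1$, absorbed into $K$) and vanishes on $\{x_1=0\}$. Conjugating by the unitary multiplier $e^{ih^{-1}\phi}$ sends $A$ to an operator of the same form (\ref{eqn-1.4}) with $V_j$ replaced by $V_j-\partial_j\phi$; in particular the new $V_1$ equals $0$. Because $\phi|_{\partial X}=0$, the Dirichlet condition is invariant, and because $g^{1k}=\updelta_{1k}$ the Neumann (conormal) derivative on $\partial X$ becomes $hD_1-V_1$, which the conjugation simply shifts by $\partial_1\phi=V_1$, again preserving the boundary condition. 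Self-adjointness, ellipticity, and $\xi$-microhyperbolicity on the level $\tau$ are all invariant under both operations (the first is a diffeomorphism, the second a unitary gauge that shifts the symbol $a(x,\xi)$ to $a(x,\xi-\nabla\phi)$, leaving the level set $\{a=\tau\}$ only translated in $\xi$).

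\emph{Main point of care.} The only non-formal step is verifying that the Fermi construction is well-defined on all of $B(0,1)\cap \bar X$: this is where one uses that $\epsilon$ in the ambient setup is small and that $|\nabla\eff|\ge \epsilon_0$, which guarantees that $\Psi$ is a diffeomorphism on a full neighborhood of $0$ and that the new coordinate $x_1$ agrees with $\eff$ up to a $\sC^K$-factor bounded above and below; this is the delicate point because it ties the normalization to the hypothesis $|\nabla\eff|\ge \epsilon_0$ of (\ref{eqn-3.5}) and to the $\sC^K$-regularity of $\partial X$. After these two reductions, the standing assumption (\ref{eqn-3.5}) may be imposed for the remainder of the analysis without loss of generality.
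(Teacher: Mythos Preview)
Your argument is correct and is precisely the standard justification. The paper, however, does not prove (\ref{eqn-3.5}) at all: it is stated as a standing normalization (``Without any loss of the generality one can assume that\ldots'') and no argument is given. So there is nothing to compare against; you have simply supplied the folklore reduction (boundary-normal/Fermi coordinates for $g^{1k}=\updelta_{1k}$ and straightening $\partial X$, then the gauge $e^{ih^{-1}\phi}$ with $\phi=\int_0^{x_1}V_1\,ds$ to kill $V_1$) that the author takes for granted.

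One minor remark: in Step~2 you note that $\phi$ loses one derivative relative to $V_1$. Since the paper already allows $K=K(d,\delta)$ to be chosen as large as needed, this is harmless, but it is worth saying explicitly (as you do) rather than leaving it implicit.
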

and therefore 
\begin{gather}
\eff(x)= x_1,\qquad \ell(x,y)=|x'-y'|+x_1+y_1
\label{eqn-3.6}
\end{gather}
and  $A$ is $\xi$-microhyperbolic on the energy level $\tau$ at $x$ as long as
\begin{gather}
|V(x)-\tau|\ge \epsilon_0,
\label{eqn-3.7}
\end{gather}
and in this case $\{\xi\colon a(x,\xi)=\tau\}$ is strongly convex.

\begin{proposition}\label{prop-3.1}
\begin{enumerate}[wide,label=(\roman*),  labelindent=0pt]
\item
As $T\le \epsilon \ell(x,y)$ the following estimate holds:
\begin{gather}
|F_{t\to h^{-1}\tau}\Bigl(\bar{\chi}_T(t)u^1_h(x,y,t)\Bigr)|\le Ch^{-d}\Bigl(\frac{h}{T}\Bigr)^s.
\label{eqn-3.8}
\end{gather}
\item
Let condition \textup{(\ref{eqn-3.1})} be fulfilled. Then for $\max(C\ell(x,y),\, h^{1-\delta})\le T\le T'\le \epsilon _1$   the following estimate holds:
\begin{gather}
|F_{t\to h^{-1}\tau}\Bigl(\bigl(\bar{\chi}_{T'}(t)- \bar{\chi}_{T} (t)\bigr) u^1_h(x,y,t)\Bigr)|\le Ch^{-d}\Bigl(\frac{h}{T}\Bigr)^s.
\label{eqn-3.9}
\end{gather}
\end{enumerate}
\end{proposition}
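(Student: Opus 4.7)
The plan is to combine propagation of singularities for the reflected propagator $u^1_h$ with integration by parts in $t$ inside the Fourier transform. The only mechanism that can make $F_{t\to h^{-1}\tau}(\bar{\chi}(t)u^1_h)$ of order $h^{-d}$ is the presence of a broken bicharacteristic of length $|t|$ on the energy level $\tau$ connecting $y$ to $x$ with at least one reflection; both statements in the proposition isolate a $t$-interval on which no such bicharacteristic exists.

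First I invoke the standard microlocal description of $u^1_h(x,y,t)$ near the boundary (in the spirit of \cite{monsterbook}, Section~\ref{monsterbook-sect-2-2}): the semiclassical wave front set of $u^1_h$ is contained in the union of generalized broken bicharacteristics of $\tau-a(x,\xi)$ reflecting at $\partial X$. Together with the setup (\ref{eqn-3.5}), $\xi$-microhyperbolicity, and uniform ellipticity, this yields a two-sided length estimate for any such ray joining $y$ to $x$: its length is $\asymp \ell(x,y)$, the lower bound coming from a triangle inequality on the broken path (which must visit $\{x_1=0\}$ and then reach $x$ from $y$, whose distances to the boundary are $y_1$ and $x_1$), and the upper bound from convexity of the energy surface.

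For part (i), taking $T\le\epsilon\ell(x,y)$ with $\epsilon$ smaller than the lower length constant, the support of $\bar{\chi}_T$ contains no $|t|$ at which a reflected ray can reach $x$ from $y$. Thus $\bar{\chi}_T(t)u^1_h(x,y,t)$, viewed as a distribution in $t$ with $x,y$ fixed, is semiclassically $O(h^s)$-smooth. Combining this with the trivial pointwise bound $|u^1_h|\le Ch^{-d}$ and $|\bar{\chi}_T^{(k)}|\le C_k T^{-k}$, iterated integration by parts via $L=(h/(-i\tau))D_t$ in $\int e^{-ih^{-1}t\tau}\bar{\chi}_T(t)u^1_h\,dt$ gives (\ref{eqn-3.8}). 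For part (ii) I additionally observe that $T'\le\epsilon_1$ with $\epsilon_1$ small enough excludes any two-reflection ray inside $B(0,1)$: the metric curvature and ellipticity constants force the second hit of $\partial X$ to occur only after a positive distance bounded below by a fixed constant. Consequently only one-reflection rays contribute for $|t|\le\epsilon_1$, their lengths fill a band $[c_1\ell(x,y),c_2\ell(x,y)]$, and choosing $C>2c_2$ makes $\{T/2\le|t|\le T'\}$ again disjoint from the connecting times, so the same integration-by-parts argument produces (\ref{eqn-3.9}).

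The step I expect to require the most care is the geometric verification in part (ii) that, under the hypothesis (\ref{eqn-3.5}) with $\sC^K$ coefficients, no multi-reflection broken bicharacteristic of total length $\le\epsilon_1$ connects $y$ to $x$; this amounts to a quantitative lower bound on the return time to $\partial X$ of a reflected bicharacteristic of the elliptic operator, and it is precisely what justifies the restriction $T'\le\epsilon_1$ used throughout the paper. Once this geometric fact is in hand, the remainder of the argument reduces to the routine non-stationary phase / integration-by-parts machinery recalled above.
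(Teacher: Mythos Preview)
Your argument for part (i) is correct and matches the paper's: finite speed of propagation implies that for $T\le\epsilon\ell(x,y)$ no reflected broken bicharacteristic from $y$ has yet reached $x$, so $\bar{\chi}_T u^1_h$ is semiclassically negligible.

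Your argument for part (ii), however, has a genuine gap. You assert that ``the metric curvature and ellipticity constants force the second hit of $\partial X$ to occur only after a positive distance bounded below by a fixed constant,'' and from this you conclude that only single-reflection billiards occur for $|t|\le\epsilon_1$. This is false in general. In the normalized setup (\ref{eqn-3.5}) the symbol is $a(x,\xi)=\xi_1^2+b(x,\xi')$, and a reflected ray leaving $\partial X$ with small $\xi_1$ (near-glancing) has $\dot{\xi}_1=-\partial_{x_1}a$; if $\partial_{x_1}b>0$ at the boundary (the Hamiltonian-concave case, which nothing in the hypotheses excludes) then $\xi_1$ decreases and the ray returns to $\partial X$ after a time $\asymp\xi_1$, which can be arbitrarily small. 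Thus multi-reflection billiards of total length $\le\epsilon_1$ do exist, and your geometric dichotomy collapses. The paper itself flags exactly this phenomenon in Remark~\ref{remark-5.2}.

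The paper's proof of (ii) avoids this altogether: it does \emph{not} try to bound the number of reflections or enumerate billiards from $y$ to $x$. Instead it uses that on the energy level $\tau$ the propagation speed in $x$ is bounded away from zero, splitting into the two regimes the paper names explicitly---either $|\xi'|$ is bounded below (and the ray moves at a definite rate tangentially along $\partial X$) or $|\xi_1|$ is bounded below (and it moves at a definite rate in the $x_1$ direction). In either regime the generalized bicharacteristic, regardless of how many reflections it undergoes, is at $x$-distance $\gtrsim|t|$ from $y$ once $|t|\ge C\ell(x,y)$, hence far from $x$. This is the content of the boundary propagation theorems in \cite{monsterbook}, Chapter~\ref{monsterbook-sect-3}, and it is what you should invoke in place of the return-time bound.
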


\begin{proof}
Statement (i)  follows from the finite speed of propagation and Statement (ii) follows from the fact that the speed of propagation with respect to $x$ is disjoint from $0$
(either along $\partial X$, if $|\xi'|$ is disjoint from $0$ or in direction of $x_1$ if $\xi_1$ is disjoint from $0$)--see \cite{monsterbook}, Chapter~\ref{monsterbook-sect-3}.
\end{proof}

\begin{corollary}\label{coro-3.2}
Let $e^{1,\T}_h(x,y,\tau)$ be defined by \textup{(\ref{eqn-1.12})} with $u^1_h(x,y,t)$ instead of $u_h(x,y,t)$.
\begin{enumerate}[wide,label=(\roman*),  labelindent=0pt]
\item
Then in $e^{1,\T}_h(x,y,\tau)$ one can replace $T=\epsilon_1$ by $T= C\max(\ell(x,y),\, h^{1-\delta})$.
\item
Further, in the framework of Proposition~\ref{prop-3.1}(ii) and $\ell (x,y)\ge h^{1-\delta}$ one can replace $\bar{\chi}_{T}(t)$ with $T=\epsilon_1$
by $\bigl(\bar{\chi}_{T}(t)-\bar{\chi}_{T'}(t)\bigr)$ with $T=C_0\ell(x,y)$ and $T'=C_0^{-1}\ell(x,y)$.
\end{enumerate}
\end{corollary}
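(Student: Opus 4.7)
The plan is to derive both parts directly from Proposition~\ref{prop-3.1} by differencing the Tauberian expression (\ref{eqn-1.12}) for two choices of cutoff parameter. Let $e^{1,\T}_{T,h}(x,y,\tau)$ denote the Tauberian expression built from $u^1_h$ with cutoff $\bar{\chi}_T$; then for $T_1\le T_2\le \epsilon_1$,
\[
e^{1,\T}_{T_2, h}(x,y,\tau) - e^{1,\T}_{T_1,h}(x,y,\tau) = h^{-1}\int_{-\infty}^\tau F_{t\to h^{-1}\tau'}\bigl((\bar{\chi}_{T_2}(t) - \bar{\chi}_{T_1}(t)) u^1_h(x,y,t)\bigr)\, d\tau',
\]
so it suffices to show that this right-hand side is $O(h^s)$ for the relevant pairs $T_1,T_2$.

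For part (i), I would take $T_2=\epsilon_1$ and $T_1=C\max(\ell(x,y), h^{1-\delta})$. Proposition~\ref{prop-3.1}(ii) then gives the pointwise bound $C h^{-d}(h/T_1)^s$ on the Fourier transform, uniformly for $\tau'$ in a bounded neighborhood of $\tau$. To turn this into a bound on the integrated expression I would split the $\tau'$-integration by a partition $\phi_1(\tau')+\phi_2(\tau')\equiv 1$ with $\phi_1\in \sC_0^\infty$ supported near $\tau$ and $\phi_2$ supported away from $\tau$ (as in the proof of Proposition~\ref{prop-2.3}). On $\supp\phi_1$ the pointwise bound integrates over a bounded range, contributing at worst an extra $h^{-1}$ which is absorbed into $(h/T_1)^s$ by choosing $s$ large. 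On $\supp\phi_2$ the standard mollification / integration-by-parts argument yields $O(h^s)$ independently of $T$, because this piece is disjoint from the energies carried by $u^1_h$.

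For part (ii), I would apply the same identity with $T_2= C_0\ell(x,y)$ and $T_1 = C_0^{-1}\ell(x,y)$; since $T_1\le \epsilon\ell(x,y)$, Proposition~\ref{prop-3.1}(i) supplies the pointwise bound $C h^{-d}(h/T_1)^s$ on $F_{t\to h^{-1}\tau'}(\bar{\chi}_{T_1}u^1_h)$, and the $\tau'$-integration argument from part (i) is identical. Combining with part (i) shows, under $\ell(x,y)\ge h^{1-\delta}$, that $e^{1,\T}_h$ coincides modulo $O(h^s)$ with the Tauberian expression built from $(\bar{\chi}_{T_2}-\bar{\chi}_{T_1})u^1_h$. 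The only genuine technical point in the whole argument is the reduction of the $\tau'$-integral to a bounded interval around $\tau$; this is the standard device used throughout Chapter~\ref{sect-2}, and I do not anticipate any serious obstacle.
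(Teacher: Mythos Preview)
Your argument is correct and matches the paper's intent; the corollary is stated without proof precisely because it follows directly from Proposition~\ref{prop-3.1} by integrating in $\tau'$. One streamlining worth knowing: since $\bar{\chi}_{T_2}-\bar{\chi}_{T_1}$ vanishes near $t=0$, the identity
\[
h^{-1}\int_{-\infty}^{\tau} F_{t\to h^{-1}\tau'}\bigl((\bar{\chi}_{T_2}-\bar{\chi}_{T_1})u^1_h\bigr)\,d\tau'
= F_{t\to h^{-1}\tau}\bigl((it)^{-1}(\bar{\chi}_{T_2}-\bar{\chi}_{T_1})u^1_h\bigr)
\]
(cf.\ the manipulation with $\beta(t)=t^{-1}\chi(t)$ in the proof of Proposition~\ref{prop-3.8}) lets you apply Proposition~\ref{prop-3.1} directly to the new cutoff without any $\phi_1/\phi_2$ partition; this bypasses the separate treatment of the far-from-$\tau$ region entirely.
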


We need to be more precise about propagation especially with respect to $x_1$ and $\xi_1$ as $\ell ^0\asymp \ell $. 
Let us $\bar{x}$, $\bar{y}$ and $\bar{w}$ be fixed final values of $x$, $y$ and $w$ respectively and we need to consider case $\ell \coloneqq \ell(\bar{x},\bar{y})\ge h^{1-\delta}$.

\begin{proposition}\label{prop-3.3}
Let $h^{1-\delta}\le T \le \epsilon_1$, $T=C_0\ell$ and
\begin{phantomequation}\label{eqn-3.10}\end{phantomequation}
\begin{gather}
\rho T \ge h^{1-\delta}, \qquad \rho \ge T^2.
\tag*{$\textup{(\ref*{eqn-3.10})}_{1,2} $}\label{eqn-3.10-*}
\end{gather}
Let   $b(\bar{y}',\bar{\xi}')=\tau$ and 
\begin{gather}
|\nabla _{y'} b(y', \bar{\xi}  )_{y'=\bar{y'}}|\le \rho T^{-1}
\label{eqn-3.11}\\
\intertext{where here and below}
b(x',\xi')\coloneqq a(x,\xi)|_{x_1=\xi_1=0};
\label{eqn-3.12}
\end{gather}
and let $Q_1(y,\eta')$ have a symbol supported in $(T,   \rho)$-vicinity of $(\bar{y},\bar{\xi}')$. 

\begin{enumerate}[wide,label=(\roman*),  labelindent=0pt]
\item
Let  $Q_2(x,\xi')$ have a symbol, equal $0$ in   $\{(x, \xi')\colon |\xi'-\bar{\xi}'|\le C_1\rho\}$. Then
\begin{gather}
F_{t\to h^{-1}\tau} \Bigl(\bar{\chi} _T(t) Q_{2x} u^1 _h(x,y,t) \,^t\!Q_{1y}\Bigr)=O(h^s).
\label{eqn-3.13}
\end{gather}
\item
Assume instead that  $Q_2(x,hD')$ has a symbol, equal $0$ as  $x_1\le C_1\sigma  T$ where here and below
\begin{gather}
\sigma = (\rho ^{\frac{1}{2}} +T). 
\label{eqn-3.14}
\end{gather}
Then \textup{(\ref{eqn-3.13})} holds.

\item
Assume instead that  $Q_2(x,hD')$ has a symbol, equal $0$  in 
\begin{multline}
\{(x',\xi')\colon |x'-\bar{y}'  + t \nabla _{\xi'}b(\bar{y}', \xi')|_{\xi=\bar{\xi}'}|\le C_1\rho T\}\\ 
\forall t\colon \epsilon T \le \pm t \le C_0T.
\label{eqn-3.15}
\end{multline}
Assume also that
\begin{gather}
\rho^2 T\ge h^{1-\delta},\qquad \rho \ge C_0T.
\label{eqn-3.16}\\
\shortintertext{Then}
F_{t\to h^{-1}\tau} \Bigl(\chi^\pm  _T(t) Q_{2x} u^1 _h(x,y,t) \,^t\!Q_{1y}\Bigr)=O(h^s).
\label{eqn-3.17}
\end{gather}
where $\chi^\pm \in \sC^\infty_0(\bR)$ is supported in $\{t\colon \epsilon \le \pm t\le 1\}$.
\end{enumerate}
\end{proposition}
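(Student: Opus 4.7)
My plan is to derive all three statements from microlocal propagation of singularities for the reflected wave $u^1_h$; since $u^1_h$ solves \textup{(\ref{eqn-3.2})}--\textup{(\ref{eqn-3.3})}, its wavefront set is transported by the Hamilton flow of the principal symbol $a(x,\xi)$ subject to the reflection $\xi_1\mapsto-\xi_1$ at $\partial X$. The requisite commutator and energy estimates with boundary conditions are already developed in \cite{monsterbook}, Chapter~\ref{monsterbook-sect-3}. In all three parts, $\,^t\!Q_{1y}$ localizes the source in a $(T,\rho)$-box around $(\bar{y},\bar{\xi}')$, and I need only verify that the flow-out of this source over $|t|\le T$ does not meet $\supp Q_2$ at any scale compatible with the uncertainty principle.

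For (i), Hamilton's equation $\dot{\xi}'=-\nabla_{x'}a$, the $T$-localization of $y'$, and the hypothesis \textup{(\ref{eqn-3.11})} combined with $b(\bar{y}',\bar{\xi}')=\tau$ give $|\nabla_{x'} a|\le C\rho T^{-1}$ on the characteristic component meeting the source; the tangential momentum therefore drifts by at most $C\rho$ during $|t|\le T$, and since reflection preserves $\xi'$, the whole trajectory stays in a $C\rho$-vicinity of $\bar{\xi}'$. Invoking the uncertainty principle $\rho T\ge h^{1-\delta}$ yields \textup{(\ref{eqn-3.13})}. For (ii), the energy-surface constraint $a(y,\eta)=\tau$ combined with $b(\bar{y}',\bar{\xi}')=\tau$ and the $(T,\rho)$-localization give $|\eta_1|\le C\rho^{1/2}$ at the source, whereas Hamilton's equations under normalization \textup{(\ref{eqn-3.5})} reduce to $\dot{x}_1=2\xi_1$ and $\dot{\xi}_1=O(1)$; these propagate $|\xi_1(t)|\le C\sigma$ for $|t|\le T$, and since reflection enforces $x_1=0$ at some intermediate time, integration yields $x_1(t)\le C\sigma T$. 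The hypothesis $\rho\ge T^2$ makes $\sigma T\asymp\rho^{1/2}T$, so the corresponding uncertainty $\rho^{1/2}\cdot\rho^{1/2}T\ge h^{1-\delta}$ holds and \textup{(\ref{eqn-3.13})} follows.

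For (iii), I will control the tangential position via $\dot{x}'=\nabla_{\xi'}a$, which on the characteristic set and under normalization \textup{(\ref{eqn-3.5})} equals $\nabla_{\xi'} b(y',\xi')$ to leading order. Taylor expanding around $(\bar{y}',\bar{\xi}')$ and using $|\xi'(t)-\bar{\xi}'|=O(\rho)$ from (i), $|x_1(t)|=O(\sigma T)$ from (ii), and the $T$-localization of $y'$, the error in $\dot{x}'$ is $O(\rho+T)$, which is $O(\rho)$ by the hypothesis $\rho\ge C_0T$ in \textup{(\ref{eqn-3.16})}; integration then traps $x'(t)$ in a $C\rho T$-tube about $\bar{y}'-t\nabla_{\xi'}b(\bar{y}',\bar{\xi}')$. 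The strengthened uncertainty $\rho^2T\ge h^{1-\delta}$ matches this tube width against the $\rho$-scale in $\xi'$, and the cutoff $\chi^\pm_T(t)$ restricts to nondegenerate times, yielding \textup{(\ref{eqn-3.17})}. The principal technical difficulty throughout is treating the reflection cleanly in the glancing regime $|\xi_1|\approx 0$---which is precisely why the composite parameter $\sigma=\rho^{1/2}+T$ appears in (ii)---but the apparatus of \cite{monsterbook}, Chapter~\ref{monsterbook-sect-3} is designed exactly for this.
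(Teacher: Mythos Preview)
Your proposal is correct and follows essentially the same route as the paper's proof: both reduce everything to standard microlocal propagation estimates (the paper cites \cite{monsterbook}, Subsection~\ref{monsterbook-sect-5-1-2} rather than Chapter~\ref{monsterbook-sect-3}, but the machinery is the same), bounding first the drift of $\xi'$, then of $(x_1,\xi_1)$, then of $x'$, and checking the relevant uncertainty principle at each stage. One small point: in part~(i) your bound $|\nabla_{x'}a|\le C\rho T^{-1}$ tacitly relies on $\textup{(\ref{eqn-3.10})}_2$, since the Taylor correction from moving $(x',x_1,\xi')$ by $(T,T,\rho)$ contributes an $O(T+\rho)$ term to $\nabla_{x'}a$, and one needs $T^2\le\rho$ to absorb this into $C\rho T^{-1}$ (the paper states the speed as $C_1(\rho+T)$ and invokes $\textup{(\ref{eqn-3.10})}_2$ explicitly); making this dependence explicit would complete the argument.
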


\begin{proof}
\begin{enumerate}[wide,label=(\roman*),  labelindent=0pt]
\item
Observe that under assumptions $\textup{(\ref{eqn-3.10})}_{2}$ and (\ref{eqn-3.11}) the propagation speed with respect  to $\xi'$ does not exceed 
$C_1 (\rho +T)$ in 
\begin{gather*}
\{(x,\xi')\colon |x'-\bar{y}'|+|x_1|+|y_1|\le C_1T,\ |\xi'-\bar{\xi}' |\le C_1\rho\}.
\end{gather*}
This implies Statement~(i) by the standard methods of propagation; see Subsection~\ref{monsterbook-sect-5-1-2} of \cite{monsterbook}. Here  $\textup{(\ref*{eqn-3.10})}_{1}$  is the microlocal uncertainty principle).

\item
Due to Statement~(i) the propagation is confined to $\{(x,\xi')\colon |b(x',\xi')-\tau |\le C_0\sigma^2\}$ and thus to $|\xi_1|\le C\sigma $ and then to
$x_1 \le  C_1\sigma \ell$. The microlocal uncertainty principle with respect to $(x_1,\xi_1)$, that is
$\sigma \times\sigma \ell\ge h^{1-\delta}$ is fulfilled due to $\textup{(\ref{eqn-3.10})}_{1} $ and therefore we can consider  propagation with respect to $(x_1,\xi_1)$ as 
$x_1\ge \sigma T$.

\item
Due to Statement~(i) the propagation in the time direction $\pm t>0$ is confined to the domain (\ref{eqn-3.15}) and here (\ref{eqn-3.16}) is a microlocal uncertainty principle. 
\end{enumerate}
\end{proof}

\begin{remark}\label{remark-3.4}
\begin{enumerate}[wide,label=(\roman*),  labelindent=0pt]
\item
We can achieve (\ref{eqn-3.11})  by  gauge transformation, not affecting $x_1,\xi_1$. Further, if
\begin{gather}
\rho \ge T
\label{eqn-3.18}
\end{gather}
then (\ref{eqn-3.11})   is fulfilled automatically.

\item
Obviously, condition  (\ref{eqn-3.11})  is important only as $\rho \le T$, which is not the case as $T\le h^{\frac{1}{2}}$. Therefore we can cover also $\ell \le h^{1-\delta}$ with $T=h^{1-\delta}$.

\item
Both statements of Proposition~\ref{prop-3.3} hold for $\tau'\colon |\tau-\tau'|\le \sigma^2 =(\rho +\ell^2)$.
\end{enumerate}
\end{remark}

\section{Spectral estimates}
\label{sect-3.2}

The following proposition will be useful in the proof of Theorem~\ref{thm-1.2}:

\begin{proposition}\label{prop-3.5}
\begin{enumerate}[wide,label=(\roman*),  labelindent=0pt]
\item
Let $L \ell^0 (x,y)\ge h^{1-\delta}$. Let $\phi \in \sC_0^\infty (\bR)$. Then
\begin{gather}
\int \phi ((\tau-\tau')/L ) F_{t\to h^{-1}\tau'} \Bigl((\bar{\chi}_T (t) u_h(x,y,t)\Bigr)\,d\tau'=O(h^s).
\label{eqn-3.19}
\end{gather}
\item
Let $L \ell (x,y)  \ge h^{1-\delta}$. Let $\phi \in \sC_0^\infty (\bR)$. Then
\begin{gather}
\int \phi ((\tau-\tau')/L ) F_{t\to h^{-1}\tau'} \Bigl((\bar{\chi}_T (t) u^1 _h(x,y,t)\Bigr)\,d\tau'=O(h^s).
\label{eqn-3.20}
\end{gather}
\end{enumerate}
\end{proposition}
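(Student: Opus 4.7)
The plan is to apply Fubini to convert the spectral integral into a time integral against a Schwartz-class window, and then to exploit finite propagation speed. Interchanging integrations gives
\begin{gather*}
\int\phi\Bigl(\frac{\tau-\tau'}{L}\Bigr)F_{t\to h^{-1}\tau'}\bigl(\bar{\chi}_T(t) u_h(x,y,t)\bigr)\,d\tau' = L\int e^{-ih^{-1}\tau t}\,\tilde\phi(Lt/h)\,\bar{\chi}_T(t) u_h(x,y,t)\,dt,
\end{gather*}
where $\tilde\phi(\xi)\coloneqq\int\phi(u)e^{i\xi u}\,du$ is Schwartz; thus the time weight $L\tilde\phi(Lt/h)$ is concentrated at scale $|t|\lesssim h/L$ with rapid decay outside.

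I would then split the $t$-integration at $|t|=h^{1-\delta}/L$. On the outer region, Schwartz decay gives $|\tilde\phi(Lt/h)|\le C_N h^{N\delta}$, and combined with the crude bound $|u_h(x,y,t)|\le C h^{-d}$ on $\supp\bar{\chi}_T$, this piece contributes $O(L\cdot T\cdot h^{N\delta-d})=O(h^s)$ for $N$ large. On the inner region, the hypothesis $L\ell^0(x,y)\ge h^{1-\delta}$ (resp.\ $L\ell(x,y)\ge h^{1-\delta}$ in (ii)) forces $|t|\le c\ell^0(x,y)$ (resp.\ $c\ell(x,y)$), after absorbing a harmless constant into $\delta$. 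For such $t$ no Hamilton trajectory --- resp.\ no generalized billiard ray with at least one reflection --- can carry a signal from $y$ to $x$, so the propagation bounds of \cite{monsterbook}, Chapter~\ref{monsterbook-sect-3} (the same input used in Proposition~\ref{prop-3.1}(i)) yield $u_h(x,y,t)=O(h^s)$ (resp.\ $u_h^1(x,y,t)=O(h^s)$) pointwise throughout this $t$-window, whence the inner contribution is also $O(h^s)$.

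The delicate point is the geometric estimate behind (ii): every generalized billiard ray with at least one reflection from $y$ to $x$ has length at least $c\ell(x,y)$, because the ray must travel from $y$ to $\partial X$ (cost $\ge\eff(y)$) and from $\partial X$ to $x$ (cost $\ge\eff(x)$), while its endpoints differ by $|x-y|$, giving length $\ge\max(|x-y|,\eff(x)+\eff(y))\ge\frac{1}{2}\ell(x,y)$. For (i), one may further decompose $u_h=u_h^0+u_h^1$; since $\ell\ge\ell^0$, the hypothesis $L\ell^0\ge h^{1-\delta}$ also yields $L\ell\ge h^{1-\delta}$, so the reflected piece is handled by the same mechanism and the free piece by the analogous in-domain propagation estimate.
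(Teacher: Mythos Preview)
Your argument is correct and is essentially the paper's proof written out in full. The paper compresses everything into one line: it observes that the left-hand side equals $\phi((hD_t-\tau)/L)\,u_h(x,y,t)\big|_{t=0}$ (respectively with $u_h^1$) and then invokes finite speed of propagation---which is exactly your Fubini step producing the time-localized weight $L\tilde\phi(Lt/h)$, followed by the propagation estimate on the window $|t|\lesssim h^{1-\delta}/L$.

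Two minor remarks. First, the decomposition $u_h=u_h^0+u_h^1$ in your treatment of (i) is unnecessary: finite propagation speed applies directly to the full propagator, since any (generalized) trajectory from $y$ to $x$---reflected or not---has length at least $c|x-y|=c\ell^0(x,y)$. Second, the crude pointwise bound $|u_h(x,y,t)|\le Ch^{-d}$ you use on the outer region relies on the oscillatory-integral representation with compactly supported amplitude (implicit energy localization near $\tau$), which is indeed available in this paper's setup; without it the full propagator kernel need not obey such a bound.
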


\begin{proof}
The left-hand expressions are just $\phi ((hD_t-\tau) /L) u_h(x,y,t)\bigr|_{t=0}$ and $\phi ((hD_t-\tau) /L) u^1_h(x,y,t)\bigr|_{t=0}$ correspondingly and we apply finite speed of propagation.
\end{proof}

\begin{proposition}\label{prop-3.6}
Under $\xi$-microhyperbolicity condition on the energy level $\tau$ estimates  \textup{(\ref{eqn-1.8})} and
\begin{gather}
|e_h(x,y,\tau)|\le Ch^{1-d} \bigl(1+\ell^{0\,-1}(x,y)\bigr)
\label{eqn-3.21}
\end{gather}
hold.
\end{proposition}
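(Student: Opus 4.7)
The plan is to combine the Tauberian reduction (\ref{eqn-1.11})--(\ref{eqn-1.12}) with the splitting $e_h^\T=e_h^{0,\T}+e_h^{1,\T}$ from (\ref{eqn-3.4}), choosing the cutoff scale $T$ to match the separation of $x$ and $y$: $T=\max(\epsilon\ell^0,\,h^{1-\delta})$ for (\ref{eqn-3.21}) and $T=\max(\epsilon\ell,\,h^{1-\delta})$ for (\ref{eqn-1.8}). The Tauberian bound (\ref{eqn-1.11}) then produces exactly the $CT^{-1}h^{1-d}$ factor appearing on the right-hand sides, reducing both inequalities to the control of $e_{T,h}^\T(x,y,\tau)$, respectively of $e_{T,h}^\T(x,y,\tau)-e_h^\W(x,y,\tau)$.

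For (\ref{eqn-3.21}) in the nontrivial range $\ell^0\ge h$, the free Tauberian piece $e_{T,h}^{0,\T}$ is dominated by Proposition~\ref{prop-2.4}(i), which gives $Ch^{(1-d)/2}(\ell^0)^{-(d+1)/2}\le Ch^{1-d}(\ell^0)^{-1}$; the reflected piece $e_{T,h}^{1,\T}$ is $O(h^\infty)$ because $T\le\epsilon\ell$ puts us in the hypothesis of Proposition~\ref{prop-3.1}(i), and Remark~\ref{remark-3.4}(iii) together with Proposition~\ref{prop-3.5}(ii) promote the pointwise-in-$\tau$ rapid decay to $O(h^\infty)$ of the full Tauberian integral in $\tau'$, using ellipticity to confine the spectrum to a bounded window. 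For $\ell^0\le h$ the bound $Ch^{1-d}(1+(\ell^0)^{-1})$ already majorizes $Ch^{-d}$, so the universal Cauchy--Schwarz estimate $|e_h(x,y,\tau)|\le(e_h(x,x,\tau)e_h(y,y,\tau))^{1/2}\le Ch^{-d}$ closes the argument.

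For (\ref{eqn-1.8}) the same architecture with the Weyl reference kernel (\ref{eqn-1.5}) inserted gives
\begin{equation*}
e_h-e_h^\W=\bigl(e_h^0-e_h^{0,\W}(x,y,\tau)\bigr)+\bigl(e_h^1\mp e_h^{0,\W}(x,\tilde y,\tau)\bigr).
\end{equation*}
The first bracket is $O(h^{1-d})$ by Proposition~\ref{prop-2.4}(ii) once $T\ge h^{1-\delta}$; the reflected Tauberian term $e_{T,h}^{1,\T}$ is $O(h^\infty)$ by the argument of the previous paragraph; and $e_h^{0,\W}(x,\tilde y,\tau)$ is an inside-domain Weyl kernel with separation $|x-\tilde y|\asymp\ell$, so Remark~\ref{remark-1.3}(ii) yields $|e_h^{0,\W}(x,\tilde y,\tau)|\le Ch^{1-d}(1+\ell^{-1})$ throughout $\ell\ge h$. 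The range $\ell\le h$ is again closed by the trivial on-diagonal bound.

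The main technical obstacle is upgrading the pointwise-at-$\tau$ estimate of Proposition~\ref{prop-3.1} to an $O(h^\infty)$ bound on the full Tauberian integral in $\tau'$: this requires applying Remark~\ref{remark-3.4}(iii) at every nearby energy level and combining with a partition-of-unity version of Proposition~\ref{prop-3.5}(ii) to cover the bounded spectral window provided by ellipticity. A secondary subtlety in $d=2$ is the correction term $e_{h,\corr}$ of Theorem~\ref{thm-1.2}(ii), but since (\ref{eqn-1.8}) is only the crude bound flagged in Remark~\ref{remark-1.3}(iv), the correction term plays no role here.
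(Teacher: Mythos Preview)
Your proposal is correct and follows essentially the same route as the paper: Tauberian reduction with $T$ proportional to $\ell$ (respectively $\ell^0$), the splitting $e_h^\T=e_h^{0,\T}+e_h^{1,\T}$, finite speed of propagation (Proposition~\ref{prop-3.1}(i)) to kill the reflected piece, and the interior estimates of Section~\ref{sect-2} for the free piece. The paper's version is terser---it takes $T=\epsilon_1\ell(x,y)$ directly and asserts $e^{0,\T}_h=e^{0,\W}+O(h^{-d}(h\ell^{-1})^s)$ and $e^{1,\T}_h=O(h^s)$ by finite speed of propagation, without your floor $h^{1-\delta}$ in $T$ or the separate discussion of promoting (\ref{eqn-3.8}) to the Tauberian integral via Proposition~\ref{prop-3.5}; your extra care is harmless but not needed.
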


\begin{proof}
In virtue of Subsection~\ref{sect-2.1} for $h\le \T\le \epsilon$
\begin{multline*}
e_h(x,y,\tau) = e^{\T}_h(x,y,\tau) +O(T^{-1}h^{1-d}) \\
=e^{0,\T}_h(x,y,\tau) +e^{1,\T}_h(x,y,\tau) +O(T^{-1}h^{1-d}) 
\end{multline*}
and for $T=\epsilon_1 \ell(x,y)$
\begin{gather*}
e^{0,\T}_h(x,y,\tau) =e^{0,\W}(x,y,\tau)+O(h^{-d}(h\ell^{-1})^s)\\
\shortintertext{and}
e^{1,\T}_h(x,y,\tau) =O(h^s)
\end{gather*}
due to the finite speed of propagation, which implies (\ref{eqn-1.8}). Estimate  (\ref{eqn-3.21}) is proven in the same way.
\end{proof}

As $h^{1-\delta}\le \ell(\bar{x},\bar(y))\le \epsilon_1$ let us introduce $\bar{\xi}^{\prime\pm}$:
\begin{gather}
\nabla _{\xi'} b(\bar{w}', \xi')|_{\xi'=\bar{\xi}^{\prime\pm}}= t(\bar{y}-\bar{x}), \qquad  b(\bar{w}', \bar{\xi}^{\prime\pm})=\tau,\qquad \pm t>0
\label{eqn-3.22}
\end{gather}
with $w'= \frac{1}{2}(\bar{x}'+\bar{y}')$. Due to strong convexity $\bar{\xi}^{\prime\pm}$ are defined uniquely and $t \asymp \ell(\bar{x},\bar(y))$.

\begin{proposition}\label{prop-3.7}
Let $h^{1-\delta}\le T \le \epsilon_1$, $T=C_0\ell$,  \textup{(\ref{eqn-3.18})} and \textup{(\ref{eqn-3.16})} be fulfilled. Then
\begin{gather}
 e_h^{1,\T } (\bar{x},\bar{y},\tau) = Q_{2x}  e_h^{1,\T} (x,y,\tau) \,^t\!Q_{1y} |_{x=\bar{x},y=\bar{y}} +O(h^s)
 \label{eqn-3.23}
\end{gather}
where $Q_1$, $Q_2$ are operators with symbols equal $1$ in $(C_1\ell, C_1\rho)$ vicinities of $(\bar{w}', \bar{\xi}^{\prime\pm})$.
\end{proposition}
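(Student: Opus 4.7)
The driving heuristic is that, by strong convexity of $\{a=\tau\}$ and (\ref{eqn-3.22}), exactly two one-reflection billiard rays on the energy surface join $\bar y$ to $\bar x$ in time $\asymp\ell$, and each carries tangential momentum near $\bar\xi^{\prime\pm}$ at the midpoint $\bar w'$. Since Corollary~\ref{coro-3.2}(ii) concentrates $e_h^{1,\T}$ on precisely this time-scale, only these two momenta should contribute modulo $O(h^s)$, which is what the cutoffs $Q_{1,2}$ already retain.

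Concretely, by Corollary~\ref{coro-3.2}(ii) I replace $\bar\chi_{\epsilon_1}(t)$ in (\ref{eqn-1.12}) by $\psi(t):=\bar\chi_{T}(t)-\bar\chi_{T'}(t)$ with $T=C_0\ell$, $T'=C_0^{-1}\ell$, modulo $O(h^s)$, and split $\psi=\psi^++\psi^-$ by time sign. Then
\begin{gather*}
e_h^{1,\T}(\bar x,\bar y,\tau)-Q_{2x}\,e_h^{1,\T}\,{}^t\!Q_{1y}\big|_{\bar x,\bar y}
=(I-Q_{2x})\,e_h^{1,\T}\big|_{\bar x,\bar y}+Q_{2x}\,e_h^{1,\T}\,(I-{}^t\!Q_{1y})\big|_{\bar x,\bar y}.
\end{gather*}
The second summand is handled exactly as the first by interchanging $(x,\bar x,Q_2)\leftrightarrow(y,\bar y,Q_1)$ (using $u^1_h(x,y,t)=\overline{u^1_h(y,x,-t)}$), so I focus on the first.

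Introduce a pseudodifferential partition of unity $1=q^+_1(y,hD')+q^-_1(y,hD')+q^0_1(y,hD')$ near $\bar y$, with $q^\pm_1$ supported in the $(\tfrac12 C_1\ell,\tfrac12 C_1\rho)$-vicinity of $(\bar w',\bar\xi^{\prime\pm})$, and decompose $(I-Q_{2x})e_h^{1,\T}\,|_{\bar x,\bar y}=\sum_\bullet(I-Q_{2x})e_h^{1,\T}\,{}^t\!q^\bullet_{1y}\,|_{\bar x,\bar y}$. For $\bullet=\pm$, Proposition~\ref{prop-3.3}(i) applies with $\bar\xi'=\bar\xi^{\prime\pm}$: the symbol of $I-Q_{2x}$ vanishes in the $C_1\rho$-vicinity of $\bar\xi^{\prime\pm}$ for $x'$ in the $C_1\ell$-vicinity of $\bar w'$ (hence at $\bar x'$), conditions (\ref{eqn-3.10-*}) follow from the assumed (\ref{eqn-3.16}) and (\ref{eqn-3.18}), and (\ref{eqn-3.11}) is free by Remark~\ref{remark-3.4}(i). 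For $\bullet=0$, further partition $q^0_1=\sum_j q^{0,j}_1$ into small momentum patches around points $\bar\xi'_{(j)}$ with $|\bar\xi'_{(j)}-\bar\xi^{\prime\pm}|\geq\tfrac12 C_1\rho$; by strong convexity of $\{b(\bar w',\cdot)=\tau\}$ and the uniqueness in (\ref{eqn-3.22}), the linearized tangential trajectory $s\mapsto\bar y'+s\nabla_{\xi'}b(\bar y',\bar\xi'_{(j)})$ for $|s|\in[\epsilon T,C_0T]$ misses $\bar x'$ by $\gtrsim\rho\ell$, exceeding the tube radius $C_1\rho T=C_1\rho\cdot C_0\ell$ in (\ref{eqn-3.15}) once the constants are fixed appropriately. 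Proposition~\ref{prop-3.3}(iii) applied to each $\psi^\pm$ and each $q^{0,j}_1$ then yields $O(h^s)$.

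These fixed-$\tau'$ estimates extend by Remark~\ref{remark-3.4}(iii) to the window $|\tau-\tau'|\leq\sigma^2=\rho+\ell^2$; for $|\tau-\tau'|\geq\sigma^2$ I invoke Proposition~\ref{prop-3.5}(ii), whose hypothesis $L\ell\geq h^{1-\delta}$ holds with $L=\sigma^2$ by (\ref{eqn-3.16}). Integrating $h^{-1}\int_{-\infty}^\tau(\cdots)\,d\tau'$ then completes (\ref{eqn-3.23}). The main obstacle is the $q^0_1$-piece: one must verify geometrically that \emph{every} one-reflection ray on energy $\tau$ from $\bar y$ of length $\asymp\ell$ either has tangential midpoint momentum within $O(\rho)$ of $\bar\xi^{\prime\pm}$, or else misses $\bar x'$ by more than the tube width in (\ref{eqn-3.15}). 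The calibration between miss-distance $\gtrsim\rho\ell$ and tube radius $C_1\rho T$ is precisely what conditions (\ref{eqn-3.16}) and (\ref{eqn-3.18}) are designed to enable, and keeping track of this matching across finitely many momentum patches is the only nontrivial bookkeeping in the argument.
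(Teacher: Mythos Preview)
Your proposal is correct and follows essentially the same approach as the paper: localize in time to $|t|\asymp\ell$ via Corollary~\ref{coro-3.2}(ii), localize in $\tau'$ via Proposition~\ref{prop-3.5}(ii), and then use the propagation results of Proposition~\ref{prop-3.3} together with the strong convexity of $\Sigma(\bar w',\tau)$ to conclude that only momenta near $\bar\xi^{\prime\pm}$ contribute. The paper's proof is organized slightly more economically: it inserts spatial cutoffs $\psi_1,\psi_2$ supported in $\epsilon\ell$-vicinities of $\bar y,\bar x$ and then applies Proposition~\ref{prop-3.3}(iii) directly to a single operator $Q'$ whose symbol vanishes near both $\bar\xi^{\prime\pm}$, rather than first splitting off the $q_1^\pm$ pieces and handling them by part~(i). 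Your explicit three-piece decomposition and the geometric miss-distance calculation for the $q_1^0$ piece make the mechanism more visible, at the cost of some extra bookkeeping; one small point to tighten is that Proposition~\ref{prop-3.3}(i) as stated asks the symbol of the left cutoff to vanish for \emph{all} $x$ with $|\xi'-\bar\xi'|\le C_1\rho$, so to apply it to $I-Q_{2x}$ you should first insert a spatial cutoff $\psi_2$ near $\bar x$ (harmless since you evaluate at $x=\bar x$), exactly as the paper does.
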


\begin{proof}
Due to Proposition~\ref{prop-3.5}
\begin{multline}
e_h^{1,\T } (x,y,\tau) \\
= h^{-1}\int_{-\infty}^\tau \bar{\phi } ((\tau -\tau')L^{-1}) F_{t\to h^{-1}\tau'} \bigl(\bar{\chi} _T(t)u^1 _h(x,y,t)\bigr) \,d\tau' +O(h^s)
\label{eqn-3.24}
\end{multline}
where $\bar{\phi } \in \sC^\infty (\bR)$, $\bar{\phi } (\tau)=1$ as $\tau \le 1$ and $\bar{\phi } (\tau)=0$ as $\tau \ge 2$, $L\ell \ge h^{1-\delta}$. 

Due to Proposition~\ref{prop-3.1}(ii) we can replace $\bar{\chi}_T(t)$ by $\chi_T(t)= \bar{\chi}_T(t)- \bar{\chi}_{\epsilon T}(t)$.
Then Proposition~\ref{prop-3.3}(iii)  implies that if $\psi _1, \psi_2\in \sC_0^\infty $ are supported in $\epsilon\ell$-vicinities of $\bar{y}$ and $\bar{x}$ respectively and
$Q'=Q'(hD')$ is $0$ in $C\rho$-vicinities of $\bar{\xi}^{\prime\pm}$ then as $|\tau'-\tau |\le L = h^{1-\delta}L^{-1}$
\begin{gather*}
F_{t\to h^{-1}\tau'} \Bigl(\chi  _T(t) Q'_x\psi_2(x) u^1_h(x,y,t)\psi_1(y)\Bigr)=O(h^s), \\
F_{t\to h^{-1}\tau'} \Bigl(\chi _T(t) \psi_2(x) u^1_h(x,y,t)\psi_1(y)\,^t\!Q'_y\Bigr)=O(h^s)
\end{gather*}
Indeed condition now are stronger than those of Proposition~\ref{prop-3.3}(iii).

Then these equalities and (\ref{eqn-3.24}) imply (\ref{eqn-3.23}).
\end{proof}

\begin{proposition}\label{prop-3.8}
In the framework of Proposition~\ref{prop-3.7}
\begin{gather}
|e^{1,\T}_{T,h}(x,y,\tau)|\le Ch^{1-d} \ell^{-1}\sigma \rho^{d-2}
\label{eqn-3.25}
\end{gather}
with $\sigma = h^{\frac{1}{2}-\delta'}\ell^{-\frac{1}{2}}+\ell$.
\end{proposition}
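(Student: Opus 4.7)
The plan is to follow the interior microlocal scheme outlined in Subsection~\ref{sect-1.5}, with the volume of the relevant $\xi$-phase-space region recomputed to account for the additional constraint that propagation near the boundary imposes on the reflected wave. Concretely, I will (a)~microlocalize in the tangential directions via Proposition~\ref{prop-3.7}, (b)~use Proposition~\ref{prop-3.3}(ii) to further restrict $|\xi_1|\le C\sigma$ and $x_1\le C\sigma\ell$, and (c)~bound the resulting doubly microlocalized kernel using the trivial estimate of Proposition~\ref{prop-3.6} multiplied by the corresponding phase-space factor.

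Step~(a) gives, modulo $O(h^s)$,
\[
e^{1,\T}_{T,h}(\bar x,\bar y,\tau)=\bigl[Q_{2x}\,e^{1,\T}_{T,h}(x,y,\tau)\,{}^t\!Q_{1y}\bigr]_{x=\bar x,\,y=\bar y},
\]
where the tangential PDOs $Q_1,Q_2$ have symbols supported in $(C_1\ell,C_1\rho)$-vicinities of $(\bar w',\bar\xi^{\prime\pm})$. In step~(b), Proposition~\ref{prop-3.3}(ii) shows that the reflected wave $u^1_h$---and therefore, via the Tauberian expression~\textup{(\ref{eqn-1.12})} together with the spectral cutoff of Proposition~\ref{prop-3.5}, also $e^{1,\T}_{T,h}$---is microlocally concentrated in $\{|\xi_1|\le C\sigma,\ x_1\le C\sigma\ell\}$ modulo $O(h^s)$. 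By $\xi$-microhyperbolicity and strong convexity of the energy surface $\{a(x,\xi',\xi_1)=\tau\}$, the portion of the $\rho$-ball in $\xi'$ compatible with $|\xi_1|\le C\sigma$ has $(d-1)$-dimensional measure $\le C\rho^{d-2}\sigma$, a factor $\sigma/\rho$ smaller than the full $\rho^{d-1}$ volume that would be available in the interior.

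Step~(c) is then the standard microlocal kernel estimate sketched in Subsection~\ref{sect-1.5}: writing the microlocalized kernel as the double integral
\[
\iint K_2(\bar x',x')\,e^{1,\T}_{T,h}(\bar x_1,x',\bar y_1,y',\tau)\,K_1(y',\bar y')\,dx'\,dy',
\]
applying Cauchy-Schwarz with the $L^2$ bounds on the tangential kernels $K_1,K_2$ (whose effective $\xi'$-support has been reduced to measure $\rho^{d-2}\sigma$ by step~(b)), and combining with the trivial pointwise estimate $|e^{1,\T}_{T,h}|\le Ch^{1-d}\ell^{-1}$, yields the desired bound $|e^{1,\T}_{T,h}(\bar x,\bar y,\tau)|\le Ch^{1-d}\ell^{-1}\sigma\rho^{d-2}$.

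The main obstacle is step~(b): the quantitative implementation of the $|\xi_1|\le C\sigma$ restriction as an effective reduction of the available $\xi'$-measure (rather than merely an a posteriori statement about the support of the propagation) requires a careful interplay between Propositions~\ref{prop-3.3} and~\ref{prop-3.5}, in particular to transfer the microlocal confinement from $u^1_h$ to the Tauberian expression without losing the improved volume factor. Once this is in place, the conclusion follows by routine Cauchy-Schwarz bookkeeping.
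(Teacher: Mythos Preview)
Your steps (a) and (b) are fine and match the paper. The gap is in step (c): the Cauchy--Schwarz you propose does not extract the factor $\sigma\rho^{d-2}$. If you bound $|e^{1,\T}_{T,h}|$ pointwise by $Ch^{1-d}\ell^{-1}$ and pair this with $L^1$ or $L^2$ norms of the tangential kernels $K_1,K_2$, you recover at best the trivial bound: the $L^1_{x'}$-norm of a semiclassical PDO kernel is $O(1)$, and an $L^2$ pairing forces you to control $e^{1,\T}_{T,h}$ as an operator on $L^2(\bR^{d-1}_{x'})$ \emph{at fixed} $x_1,y_1$, which is not available (a hyperplane slice of a projector kernel is not the kernel of a bounded operator). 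There is also a small slip in your volume count: the $\xi'$-Lebesgue measure of $\{|b-\tau|\le C\sigma^2\}$ inside the $\rho$-ball is $\asymp\rho^{d-2}\sigma^2$, not $\rho^{d-2}\sigma$; the extra $\sigma^{-1}$ comes later from the density $d\xi{:}d_\xi a\asymp d\xi'/|\xi_1|$ on the energy surface.

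The paper's mechanism is different and uses positivity. First it replaces $e^{1,\T}_{T,h}$ by the full $e^{\T}_{T,h}$ (legitimate since $\ell\asymp\ell^0$ here), rewrites the Tauberian expression through the spectral measure so that what must be bounded is $Q_{2x}\,e_h(x,y,\tau'+hT^{-1},\tau')\,{}^t\!Q_{1y}$, and then exploits that $e_h(\,\cdot\,,\,\cdot\,,\tau_1,\tau_2)$ is the kernel of a \emph{positive} operator. Cauchy--Schwarz at the operator level gives
\[
\bigl|Q_{2x}e_h(x,y,\cdot)\,{}^t\!Q_{1y}\bigr|
\le
\bigl|Q_{2x}e_h(x,z,\cdot)\,{}^t\!Q_{2z}\bigr|_{z=x}^{1/2}
\bigl|Q_{1z}e_h(z,y,\cdot)\,{}^t\!Q_{1y}\bigr|_{z=y}^{1/2},
\]
reducing to the \emph{on-diagonal} microlocalized spectral function. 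The on-diagonal Tauberian/Weyl estimate then gives precisely the phase-space integral $\int_{\Sigma(x,\tau')\cap(\Omega\times\bR_{\xi_1})}d\xi{:}d_\xi a\le C\sigma\rho^{d-2}$, and the factor $T^{-1}\asymp\ell^{-1}$ comes from the Tauberian convolution. This positivity step---not a kernel $L^2$ estimate---is what your argument is missing.
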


\begin{proof}
Due to Propositions~\ref{prop-3.1}(ii) and~\ref{prop-3.7} we need to estimate the right-hand expression of (\ref{eqn-3.24}) with symbols of $Q_1$ and $Q_2$
supported in $\Omega$, the union of $(C_1 \ell, C_1 \rho)$-vicinities of $(\bar{w}',\bar{\xi}^{\prime+})$ and $(\bar{w}',\bar{\xi}^{\prime-})$ intersected with
\begin{gather*}
\{\xi \colon |b(x',\xi')-\tau|\le C\sigma^2\},
\end{gather*}
 and  with $\bar{\chi}_T(t)$ 
replaced by  $\chi_T(t)= \bar{\chi}_T(t)- \bar{\chi}_{\epsilon T}(t)$. Observe that modulo $O(h^s)$ 
\begin{multline*}
e_h^{1,\T} (x,y,\tau)  \equiv h^{-1} \int_{-\infty}^\tau F_{t\to h^{-1}\tau'} \Bigl (\chi_T(t) u^1_h (x,y,t) \Bigr)\,d\tau'\\
= T^{-1}  F_{t\to h^{-1}\tau} \Bigl (\beta_T(t) u^1_h (x,y,t) \Bigr)
\end{multline*}
with $\beta(t)=t^{-1}\chi(t)$. 

Since $\ell \asymp \ell^0$ it sufficient to prove it for $e^{\T}_{T,h}(x,y,\tau)$.  Expressing $u_h(x,y,t)= \int e^{ih^{-1}t\tau} \,d_\tau e_{h}(x,y,\tau)$ we see that expression in question equals
\begin{gather*}
\int \hat{\beta}\bigl (\frac{(\tau-\tau')T}{h}\bigr) \,d_{\tau'} Q_{2x}e_h(x,y,\tau')\,^t\!Q_{1y}
\end{gather*}
which does not exceed
\begin{align*}
C\sup _{\tau'\colon |\tau'-\tau|\le L}& |Q_{2x} e_h(x,y,\tau'+ hT^{-1},\tau')\,^t\! Q_{1y}| +C'h^s.\\
\intertext{Since $e_h(x,y,\tau)$ is the Schwartz kernel of the orthogonal projector this does not exceed}
C\sup _{\tau'\colon |\tau'-\tau|\le L} & |Q_{2x} e_h(x,z,\tau'+ h T^{-1} ,\tau')\,^t\! Q_{2z}|_{z=x} |^{\frac{1}{2}}\\ 
\times& |Q_{1z} e_h(z,y,\tau'+ hT^{-1},\tau')\,^t\! Q_{1y}|_{z=y}|^{\frac{1}{2}} +C'h^s.
\end{align*}
Applying Tauberian estimate  for $x=y$ we see that the right-hand  expression does not exceed 
\begin{gather*}
Ch^{1-d}\ell^{-1} \sup _{\tau'\colon |\tau'-\tau|\le L} \int _{\Sigma (x,\tau')\cap (\Omega  \times \bR_{\xi_1})} \, d\xi:d_\xi a .
\end{gather*}
The integral in the right-hand expression does not exceed $C\sigma \rho^{d-2}$. Finally, recall that $T\asymp \ell $.  
\end{proof}

Taking $\rho =\max ((h^{1-\delta'}\ell^{-1})^{\frac{1}{2}},\, \ell)$ to satisfy (\ref{eqn-3.16}) and (\ref{eqn-3.18}), and thus $\sigma=\rho$ we get
\begin{corollary}\label{cor-3.9}
The following estimates hold as $|x_1|+|y_1|\le C\sigma \ell$
\begin{gather}
e^{1,\T}_{h} (x,y,\tau)=O(h^{1-d}) \ \ \text{for\ \ }   \left\{\begin{aligned}
&\ell \ge h^{\frac{1}{2}+\delta} &&d\ge 4,\\
&\ell \ge h^{\frac{1}{2}-\delta} &&d=3,\\
&\ell \ge h^{\frac{1}{3}-\delta} &&d=2.
\end{aligned}\right.
\label{eqn-3.26}
\end{gather}
\end{corollary}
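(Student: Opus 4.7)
The plan is to substitute the proposed choice of $\rho$ into Proposition~\ref{prop-3.8}, verify the constraints (\ref{eqn-3.16}) and (\ref{eqn-3.18}), and then optimize the resulting bound case by case in $d$. After this, the transition between $e^{1,\T}_{T,h}$ (with $T\asymp \ell$) and $e^{1,\T}_h$ will be handled by Corollary~\ref{coro-3.2}(i), which contributes only $O(h^{1-d})$.

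First I would set $\rho=\max\bigl((h^{1-\delta'}\ell^{-1})^{1/2},\,\ell\bigr)$ with $\delta'$ chosen slightly smaller than~$\delta$, and check that (\ref{eqn-3.16}) and (\ref{eqn-3.18}) hold with $T\asymp \ell$: the inequality $\rho\ge T$ is immediate from the first branch of the max, and $\rho^2 T\ge h^{1-\delta}$ follows from the second branch since $\rho^2\ell\ge h^{1-\delta'}$. With this choice $\sigma=h^{1/2-\delta'}\ell^{-1/2}+\ell\asymp \rho$, so Proposition~\ref{prop-3.8} yields
\begin{gather*}
|e^{1,\T}_{T,h}(x,y,\tau)|\le Ch^{1-d}\ell^{-1}\rho^{d-1}.
\end{gather*}

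Next I would split into the two regimes determining $\rho$. In the regime $\ell\ge h^{(1-\delta')/3}$ we have $\rho\asymp\ell$, so the right-hand side becomes $Ch^{1-d}\ell^{d-2}$, which is $O(h^{1-d})$ for all $d\ge 2$ since $\ell\le \epsilon$. In the regime $\ell\le h^{(1-\delta')/3}$ we have $\rho\asymp h^{(1-\delta')/2}\ell^{-1/2}$, and the bound becomes
\begin{gather*}
Ch^{1-d}\ell^{-1}\cdot h^{(d-1)(1-\delta')/2}\ell^{-(d-1)/2}=Ch^{1-d}\cdot h^{(d-1)(1-\delta')/2}\ell^{-(d+1)/2}.
\end{gather*}
For this to be $O(h^{1-d})$ one needs $\ell^{(d+1)/2}\ge c\,h^{(d-1)(1-\delta')/2}$, i.e.\ $\ell\ge c\,h^{(d-1)(1-\delta')/(d+1)}$. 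Plugging in $d=2,3$ this gives exactly $\ell\ge h^{1/3-\delta}$ and $\ell\ge h^{1/2-\delta}$ respectively, while for $d\ge 4$ the exponent $(d-1)/(d+1)\ge 3/5>1/2$, so the stronger assumption $\ell\ge h^{1/2+\delta}$ of the statement a fortiori implies it (for $\delta$ small relative to $\delta'$).

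Finally I would invoke Corollary~\ref{coro-3.2}(i) to replace $e^{1,\T}_{T,h}$ by $e^{1,\T}_h$ modulo an $O(T^{-1}h^{1-d})=O(\ell^{-1}h^{1-d})$ error, and absorb this into the main bound using the case distinction above (in the regime $\rho\asymp\ell$ the extra $\ell^{-1}$ factor yields $Ch^{1-d}\ell^{d-3}$, still $O(h^{1-d})$ for the admissible $\ell$; in the regime $\rho\asymp h^{(1-\delta')/2}\ell^{-1/2}$ the constraint $\ell\ge c\,h^{(d-1)(1-\delta')/(d+1)}$ also ensures $\ell^{-1}h^{1-d}=O(h^{1-d})$ since $\ell\ge h$). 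The main obstacle is the bookkeeping of exponents: choosing $\delta'$ slightly smaller than $\delta$ and propagating this through both regimes must be done carefully so that the final thresholds match the statement exactly for each $d$. Everything else is a direct substitution.
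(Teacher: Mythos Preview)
Your approach is exactly the paper's: plug $\rho=\max\bigl((h^{1-\delta'}\ell^{-1})^{1/2},\,\ell\bigr)$ into Proposition~\ref{prop-3.8}, observe $\sigma\asymp\rho$, and check the resulting bound $Ch^{1-d}\ell^{-1}\rho^{d-1}$ against $h^{1-d}$ in each regime. Three minor bookkeeping slips to clean up: (a) your ``first/second branch'' labels are swapped---it is $\rho\ge\ell$ (the second alternative) that gives $\rho\ge T$, and $\rho\ge (h^{1-\delta'}\ell^{-1})^{1/2}$ (the first) that gives $\rho^2\ell\ge h^{1-\delta'}$; (b) for $h^{1-\delta'}\ge h^{1-\delta}$ you need $\delta'\ge\delta$, not $\delta'<\delta$; (c) Corollary~\ref{coro-3.2}(i) replaces $T=\epsilon_1$ by $T\asymp\ell$ with error $O(h^s)$ (it comes from Proposition~\ref{prop-3.1}, not from the Tauberian estimate~(\ref{eqn-1.11})), so your final absorption paragraph is unnecessary.
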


\begin{remark}\label{remark-3.10}
\begin{enumerate}[wide,label=(\roman*),  labelindent=0pt]
\item
Then $\sigma \ge h^{\frac{1}{3}-\delta'}$.
\item
The similar arguments work under assumption $|x'-y'|\ge \epsilon |x_1|+|y_1|$ instead of $|x_1|+|y_1|\le C\sigma \ell$.
\item
The similar but simpler arguments work for $e^{0,\T}_h(x,y,\tau)$  as $\rho^2 \ell ^0 \ge h^{1-\delta}$.
\end{enumerate}
\end{remark}

\chapter{Successive approximations}
\label{sect-4}

\section{Successive approximations inside domain}
\label{sect-4.1}

\subsection{Standard successive approximations}
\label{sect-4.1.1}
In this Section we are going to apply method of successive approximations to derive asymptotics of $e_h(x,y,\tau)$ near boundary. However we start from the partial proof of Theorem~\ref{thm-1.1} by this method away from the boundary. According to \cite{monsterbook}, Section~\ref{monsterbook-sect-5-3} (which is our standard reference here), we consider problem for propagator $u _h(x,y,t)$
\begin{align*}
&(hD_t-A) u_h =0, && u|_{t=0}=\updelta (x-y)
\end{align*}
with $A=A(x,hD_x,h)$ and rewrite  for $u^\pm_h(x,y,t)\coloneqq u_h(x,y,t)\uptheta(\pm t)$. We have equation
\begin{gather}
(hD_t-A)u_h^\pm = \mp ih\updelta (x-y)\updelta (t)
\label{eqn-4.1}
\end{gather}
which we are going to solve by the successive approximations with unperturbed operator $\bar{A}=a(y,hD_x)$. 
Then (\ref{eqn-4.1}) yields the equality
\begin{gather}
(hD_t -\bar{A})u_h^\pm = \mp ih\updelta (x-y)\updelta (t)+Ru_h^\pm
\label{eqn-4.2}\\
\shortintertext{and hence}
u_h^\pm =\mp ih\bar{G}^\pm \updelta (x-y)\updelta (t)+\bar{G}^\pm Ru_h^\pm,
\label{eqn-4.3}
\end{gather}
where $\bar{G}^\pm$ and $G^\pm$ are parametrices of the problems
\begin{align}
&(hD_t-\bar{A})v=f, \qquad\supp(v)\subset\{\pm (t-t_0)\ge 0 \}
\label{eqn-4.4}\\
\shortintertext{and}
&(hD_t-{A})v=f, \qquad\supp(v)\subset \{\pm(t-t_0) \ge 0 \}
\label{eqn-4.5}
\end{align}
respectively with $\supp(f)\subset \{\pm (t-t_0) \ge 0\}$ for some $t_0 \in \bR$; $R=A-\bar{A}$.

Moreover, equation (\ref{eqn-4.1}) yields that
\begin{equation}
u_h^\pm=\mp ihG^\pm \updelta (x-y)\updelta (t).
\label{eqn-4.6}
\end{equation}
Iterating (\ref{eqn-4.3}) $N$ times and then substituting (\ref{eqn-4.6}) we
obtain the equality
\begin{multline}
u_h^\pm = \mp ih \sum _{0 \le n \le N-1 }
(\bar{G}^\pm R)^n\bar{G}^\pm \updelta (x-y)\updelta (t) \mp\\
ih(\bar{G}^\pm R)^nG^\pm \updelta (x-y)\updelta (t).
\label{eqn-4.7}
\end{multline}
Finally, we apply $^t\!Q_y=\,^t\!Q(y, hD_y)$ to the right of (\ref{eqn-4.7}), where $Q$ is an operator with compactly supported symbol, equal $1$ as $a(y,\xi)\le \tau+\epsilon$.
After this cut-off, according to \cite{monsterbook}, Section~\ref{monsterbook-sect-5-3}, norms of the terms in (\ref{eqn-4.7}) in the strip $\{(x,t)\colon |t|\le T\}$ do not exceed $Ch^{-M-n} T^{2n}$ and 
as $T\le h^{\frac{1}{2}+\delta}$ we can ignore the remainder term. Since we need to consider   $T\le c\ell(x,y)$ in the end we arrive to
\begin{gather}
\max(c\ell,\, h^{1-\delta}) \le T \le h^{\frac{1}{2}+\delta}.
\label{eqn-4.8}
\end{gather}
Then as it was shown (see \cite{monsterbook}, simplified (\ref{monsterbook-4-3-33}))
\begin{multline}
F_{t\to h^{-1}\tau} \Bigl(\bar{\chi}_T u_h^\pm(x,y,t)\,^t\!Q_y\Bigr) \\
\equiv 
\mp i\sum_{0\le n\le N}  (2\pi)^{-d-1}h^{-d+n}\int e^{ih^{-1}\langle x-y,\xi\rangle} F_n(y,\xi,\tau ) [q_2(y,\xi,h)]\,d\xi,\quad \pm \Im \tau <0,
\label{eqn-4.9}
\end{multline}
where $F_n =\sum_{\beta\colon |\beta|\le n } F_{n,\beta} \partial_\xi^\beta $ are differential operators  applied to $q_2$, with coefficients $F_{n\beta}$ holomorphic with respect to $\xi,\tau$ as $\Im \tau \ne 0$, and have poles as $\tau \in \bR$; denominators are $(\tau -a(y,\xi))^{2n+1+|\beta|}$. Then
\begin{multline}
F_{t\to h^{-1}\tau} \Bigl(\bar{\chi}_T u_h(x,y,t)\Bigr) \\
\equiv 
\sum_{0\le n\le N}  (2\pi)^{-d-1}h^{-d+n}\int e^{ih^{-1}\langle x-y,\xi\rangle} \cF_n(y,\xi,\tau ) \,d\xi,\qquad \tau\in \bR
\label{eqn-4.10}
\end{multline}
where $ \cF_n(y,\xi,\tau )=i\bigl( F_{n0} (y,\xi,\tau -i0)- F_{n0} (y,\xi,\tau +i0)\bigr)$. Then the principal term in $e^\T_{T,h}(x,y,\tau)$ can be rewritten as $e^\W_h(x,y,\tau)$ while all other terms as 
\begin{gather}
h^{-d+n-|\alpha|  }  \sum_{\alpha\colon |\alpha|\le 2n-1} (x-y)^\alpha  \int _{\Sigma (y,\tau)} W_{n,\alpha} (y,\xi) e^{ih^{-1}\langle x-y,\xi\rangle}\,d\xi
\label{eqn-4.11}
\end{gather}
with  $n\ge 1$ and smooth $W_{n,\alpha} $. 

Indeed, taking $\xi$-partition and using $\xi$-micro\-hyperbolicity, we can rewrite on each element $\cF_n$ as a sum of  ${W'_{n,\alpha} (y,\xi)\partial_\xi^\alpha \updelta '(\tau-a(y,\xi))}$. Then, integrating by parts we will get (\ref{eqn-4.11}) for terms in $e^\T_{T,h} (x,y,\tau)$.

Now due to strong convexity and stationary phase principle these terms do not exceed
\begin{gather}
Ch^{-d+n-|\alpha|}  ({h}/{\ell})^{\frac{d-1}{2}} \ell^{|\alpha|} \le Ch^{-\frac{d+1}{2}+n -|\alpha| } \ell^{-\frac{d-1}{2} +|\alpha|},
\label{eqn-4.12}
\end{gather}
which is $O(h^{1-d})$ as $d\ge 3$ and $O(h^{-\frac{3}{2}}\ell^{\frac{1}{2}})$ as $d=2$. Recall that $\ell\le h^{\frac{1}{2}+\delta}$.

\subsection{Improved successive approximations}
\label{sect-4.1.2}
To improve this approach we need to freeze symbol at point $w=\frac{1}{2}(x+y)$ and to do this let us observe that \begin{align*}
&\bigl(hD_t-A(x,hD_x,h)\bigr) u_h(x,y,t)=0,\\
& \bigl(hD_t-  A(y,-hD_y,h)\bigr)u_h(x,y,t)=0, && u_h|_{t=0}=\updelta (x-y)
\end{align*}
and therefore denoting $v_h (w,z,t) = u_h (w+\frac{1}{2}z, w-\frac{1}{2}z, t)$ we have
\begin{gather}
\bigl(hD_t - \mathfrak{A} (w, z, hD_z, hD_w,h)\bigr)v_h(w,z,t)=0, \qquad v_h|_{t=0} =  \updelta (z)
\label{eqn-4.13}\\
\shortintertext{with}
\mathfrak{A}  = \frac{1}{2}\bigl(A(w +\frac{1}{2}z, hD_z+\frac{1}{2}hD_w,h) + A(w-\frac{1}{2}z, hD_z-\frac{1}{2}hD_w, h)\bigr).
\label{eqn-4.14}
\end{gather}
Taking as unperturbed operator
\begin{gather}
\bar{\mathfrak{A}}= A(w, hD_z,h) 
\label{eqn-4.15}
\end{gather}
we get $R=\mathfrak{A}-\bar{\mathfrak{A}} $ a a sum of $z^\alpha (h\partial_w)^\beta$ with $|\alpha|+|\beta|\ge 2$. 

With this modification we can write formula, similar to (\ref{eqn-4.7}).  Due to the propagation results we can add to $R$ factor $\bar{\chi}_{cT}(h\partial _w)$ and then each new term in this formula adds an extra factor $C(T^2+h) Th^{-1}$ instead of $CT^2h^{-1}$ and we can replace (\ref{eqn-4.8}) by a weaker condition
\begin{gather}
\max(c\ell,\, h^{1-\delta}) \le T \le h^{\frac{1}{3}+\delta}.
\label{eqn-4.16}
\end{gather}
Calculations, similar to those in the standard method lead us to the main term $e^\W(x,y,\tau)$ and to (\ref{eqn-4.11}) replaced by
\begin{gather}
h^{-d+n-|\alpha|  }  \sum_{\alpha\colon 2|\alpha|\le 3n-1} (x-y)^\alpha  \int _{\Sigma (w,\tau)} W_{n,\alpha} (w,\xi) e^{ih^{-1}\langle z,\xi\rangle}\,d\xi.
\label{eqn-4.17}
\end{gather}
and to the same estimate (\ref{eqn-4.12}) albeit with a restrictions $|\alpha|\le (3n-1)/2$, $\ell \le h^{\frac{1}{3}}$, and we get $O(h^{1-d})$ as $d\ge 1$. 

\section{Successive approximations near boundary}
\label{sect-4.2}

\subsection{Standard successive approximations}
\label{sect-4.2.1}

We know (see \cite{monsterbook}, Section~\ref{monsterbook-sect-7-2}) that $u_h^0(x,y,t)$ and then $u_h^1(x,y,t)$ for $|t|\le T =h^{\frac{1}{2}+\delta}$, $x_1+y_1\le C_0T$  could be constructed by the method of successive approximations with unperturbed operator $\bar{A}=a(0,y'; hD_x)$ (so the principal part of $A$ is frozen at point $(0,y')$):  it leads us to the expression for $e^\T_h(x,y,\tau)$.

In this case unperturbed term would be as for   $\bar{A}$ in the half-space and it will not exceed $Ch^{-d}(h/\ell)^{{d+1}/2}$ and  and one can prove that the perturbed term would acquire factor $T^2/h$ with $T\asymp \ell$ and it does not exceed  (\ref{eqn-4.12}) (with redefined $\ell(x,y)$ now) and it is $O(h^{1-d})$ as $d\ge 3$ and 
$Ch^{-\frac{3}{2}}\ell ^{\frac{1}{2}}$ as $d=2$. 

Since we assumed that the boundary conditions are either Dirichlet or Neumann, we arrive to the expression (\ref{eqn-1.3}) for $e^{1,\T}_h(x,y,\tau)$ as described in Theorem~\ref{thm-1.2}.

\subsection{Improved successive approximations}
\label{sect-4.2.2}

Let us improve this construction in the same manner as we did inside domain. Consider problem for $u^1_h$:
\begin{align}
& (hD_t-A)u_h^1 =0,\qquad  u^1_h |_{t=0}=0,
\label{eqn-4.18}\\[3pt]
& \eth B u_h^1   =-\eth B u_h^0|_{x\in \partial X},
\label{eqn-4.19}
\end{align}
where $u^0_h(x,y,t)$ satisfies the same equation  in the ``whole space'' and $\eth$ is an operator restriction to $\partial X\ni x$.
Then
\begin{align}
& (hD_t-A)u_h^{1,\pm}  =0,
\label{eqn-4.20}\\[3pt]
& \eth B u_h^{1\,\pm}  =-\eth B u_h^{0\,\pm}\label{eqn-4.21}
\end{align}
where again $u^{j\,\pm}=\uptheta(\pm t)u^j_h (x,y,t)$ and 
\begin{align}
&(hD_t -\bar{A})u_h^{1\,\pm} = Ru^{1\,\pm},
\label{eqn-4.22}\\
&\eth B u_h^{1\,\pm}  =- Bu_h^{0\,\pm},
\label{eqn-4.23}
\end{align}
where $u_h^{0,\pm}$ satisfies (\ref{eqn-4.2}); recall that $B=I$ or $B=D_1$ for Dirichlet and Neumann boundary conditions correspondingly\footnote{\label{foot-4} For more general boundary conditions we would need to replace $B$ by $\bar{B}+R_1$.} 
Then like in \cite{monsterbook}, Section~\ref{monsterbook-sect-7-2}
\begin{gather}
u_h^{1\,\pm} =-\bar{G}^{\prime\,\pm} \eth B  u_h^{0\,\pm} + \bar{G}^\pm Ru_h^{1\,\pm},
\label{eqn-4.24}
\end{gather}
where  $\bar{G}^\pm$ are parametrices for the problems
\begin{align}
&(hD_t-\bar{A})v =f, && \eth Bv =0, && \supp(v)\subset \{\pm (t-t_0)\ge 0\}
\label{eqn-4.25}\\
\intertext{and $\bar{G}^{\prime\,\pm}$ are parametrices for the problems}
&(hD_t-\bar{A})v =0, && \eth Bv =f, && \supp(v)\subset \{\pm (t-t_0)\ge 0\}
\label{eqn-4.26}
\end{align}
respectively with $\supp(f)\subset \{\pm (t-t_0) \ge 0\}$ for some $t_0 \in \bR$; $R= A-\bar{A}$. Moreover,
\begin{gather}
u_h^{1\,\pm} =- G ^{\prime\,\pm} \eth B  u_h^{0\,\pm} 
\label{eqn-4.27}
\end{gather}
where ${G}^\pm$ and ${G}^{\prime\,\pm}$ are parametrices for the problems (\ref{eqn-4.25}) and (\ref{eqn-4.26}) but for operator $A$.
Iterating (\ref{eqn-4.24}) $N$ times and then substituting (\ref{eqn-4.27}) we arrive to formula similar to (\ref{eqn-4.7})
\begin{gather}
u_h^{1\,\pm} =-\sum_{0\le n\le N-1}
(\bar{G}^\pm R)^n \bar{G}^{\prime\,\pm} \eth B  u_h^{0\,\pm} - (\bar{G}^\pm R)^N  G ^{\prime\,\pm} \eth B  u_h^{0\,\pm}.
\label{eqn-4.28}
\end{gather}
What is more, we plug $u_h^{0\,\pm}$ given by (\ref{eqn-4.7}) with  $\bar{G}^{\pm}$ and $G^{\pm}$ replaced by $\bar{G}^{0\,\pm}$ and $G^{0\,\pm}$
which are parametrices for (\ref{eqn-4.4}) and (\ref{eqn-4.5}) in the ``whole space''.

Finally, we apply $^t\!Q_1=\,^t\!Q(y_1, z', hD'_z)$ to the right of (\ref{eqn-4.27}) and (\ref{eqn-4.7}), where $Q_1$ is an operator with the  symbol, supported in
\begin{gather}
\Omega_{ T, \rho,\sigma }\coloneqq \{(x_1, z',\zeta)\colon |z'|\le T,\ |\zeta- \bar{\xi}'| \le \rho, \ |x_1|\le \sigma T\}
\label{eqn-4.29}
\end{gather}
where $|a(0,\bar{x}', 0,\bar{\xi}')-\tau)|\le c\rho^2$. Then due to Proposition~\ref{prop-3.3}, under assumptions (\ref{eqn-3.11}) and $\textup{(\ref*{eqn-3.10})}_{1,2}$ we can insert
$Q_2 (x_1, hD'_z)$ to the right of each copy of $R$ with symbol supported in $\Omega_{3T, 3\rho, 3\sigma}$ and equal $1$ in $\Omega_{2T, 2\rho,2\sigma}$.

So far we we followed exactly \cite{monsterbook}, Section~\ref{monsterbook-sect-7-2}, except there we had $\rho=c$. However now instead of $\bar{A}=a(0,y'; hD_x)$ we take
\begin{gather}
\bar{A}=a(w,  hD_x), \qquad w =(0,w'),\qquad w'=\frac{1}{2}(x'+y').
\label{eqn-4.30}
\end{gather}

Then, the norm of $\bar{G}^{\pm}RQ_2$  does not exceed $C(T^2+\sigma T+h) Th^{-1}$ and we can replace (\ref{eqn-4.8}) by a weaker condition 
$(T^2+\sigma T +h) T\le h^{1+\delta}$, which is equivalent to (\ref{eqn-4.16}) plus 
\begin{gather}
\sigma \ell ^2 \le h^{1+\delta}.
\label{eqn-4.31}
\end{gather}

\begin{remark}\label{remark-4.1}
 Recall that $\sigma $ is defined by (\ref{eqn-3.14}) and with $\rho = h^{1-\delta}\ell^{-1}$ condition (\ref{eqn-4.31}) is also equivalent to (\ref{eqn-4.16}).
 However, for us it is more important which value of the reflection angle is allowed in this method.
\end{remark}

Thus we arrive to the following statement:
\begin{proposition} \label{prop-4.2}
Let $Q=Q(x_1,z',hD'_z)$ be an operator with the symbol $q(x_1,z',\zeta')$ supported in $\Omega_{T,\rho,\sigma}$ where condition  \textup{(\ref{eqn-3.11})}  is fulfilled  and let conditions \textup{(\ref{eqn-4.16})} and \textup{(\ref{eqn-4.31})} be also fulfilled.

Then we can skip  remainder terms in both   \textup{(\ref{eqn-4.27})} and modified \textup{(\ref{eqn-4.7})}, which  are $O(h^s)$,  
leaving us only with $\bar{G}^\pm$, $\bar{G}^{\prime\,\pm}$ and $\bar{G}^{0\,\pm}$.
\end{proposition}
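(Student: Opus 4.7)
The plan is to control the two remainder terms
\[
\cR_1 \coloneqq (\bar{G}^\pm R)^N G^{\prime\,\pm} \eth B u_h^{0\,\pm}\,{}^t\!Q
\qquad\text{and}\qquad
\cR_2 \coloneqq (\bar{G}^\pm R)^N G^{\pm}\updelta(x-y)\updelta(t)\,{}^t\!Q
\]
arising from iterating \textup{(\ref{eqn-4.24})} and \textup{(\ref{eqn-4.3})} (in the modified form with $\bar{\mathfrak{A}}$ of \textup{(\ref{eqn-4.30})}), and to show that after insertion of the microlocal cutoffs $Q_2(x_1,hD'_z)$ between each factor the norms are $O(h^s)$ for $N$ sufficiently large. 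The strategy follows the scheme of \cite{monsterbook}, Section~\ref{monsterbook-sect-7-2}, but with $\rho$ and $\sigma$ varying rather than fixed at a constant.

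First I would verify that Proposition~\ref{prop-3.3}, applied to each intermediate factor, allows us to insert $Q_2$ (supported in $\Omega_{3T,3\rho,3\sigma}$ and equal to $1$ on $\Omega_{2T,2\rho,2\sigma}$) to the right of every copy of $R$ at the cost of an $O(h^s)$ error per insertion. The microlocal uncertainty principles $\rho T\ge h^{1-\delta}$ and $\sigma\cdot\sigma T\ge h^{1-\delta}$ needed for this are exactly conditions $\textup{(\ref*{eqn-3.10})}_{1,2}$, while (\ref{eqn-3.11}) guarantees that the symbol of $\bar{A}$ is close enough to $A$ on the support of $Q_2$ that trajectories stay confined; this is where the freezing at $w=(0,w')$ in (\ref{eqn-4.30}) matters, because only then is $R=\mathfrak{A}-\bar{\mathfrak{A}}$ genuinely a sum of terms containing factors $z^\alpha(h\partial_w)^\beta$ with $|\alpha|+|\beta|\ge 2$, which is crucial in the next step.

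Next I would estimate the norm of a single factor $\bar{G}^\pm R Q_2$. On $\Omega_{3T,3\rho,3\sigma}$ the operator $R$ is bounded in the operator norm on $\sC^\infty$-localized data by $C(T^2+\sigma T+h)$: the $T^2$ comes from a double spatial Taylor expansion in $z'$, the $\sigma T$ from the mixed $x_1$-$w$ derivative terms (since $x_1\le \sigma T$ on the support), and the $h$ from the subprincipal commutator correction. The parametrix $\bar{G}^\pm$ contributes $Th^{-1}$, so each iteration multiplies the estimate by $C(T^2+\sigma T+h)Th^{-1}$. Under (\ref{eqn-4.16}) the terms $T^3h^{-1}$ and $T$ are both $O(h^\delta)$, and (\ref{eqn-4.31}) ensures that $\sigma T^2 h^{-1}\le h^\delta$ as well. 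Hence the full $N$-fold product is $O(h^{N\delta})$.

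Finally I would combine this with polynomial-in-$h^{-1}$ bounds for the initial factors $G^{\prime\,\pm}\eth B u_h^{0\,\pm}$ and $G^\pm\updelta(x-y)\updelta(t)$ (standard bounds, since these are parametrices acting on Dirac masses and boundary traces of $u_h^{0\,\pm}$, which are controlled by finite speed of propagation once the cutoff $Q_2$ is inserted on the right), and choose $N$ so that $N\delta\ge s+M$ with $M$ absorbing the polynomial loss. The result is that $\cR_1,\cR_2={}O(h^s)$, so the sums in (\ref{eqn-4.28}) and the modified (\ref{eqn-4.7}) may be truncated as claimed. The main obstacle is bookkeeping the three scales $(T,\rho,\sigma)$ simultaneously: one must check that (\ref{eqn-3.11}), $\textup{(\ref*{eqn-3.10})}_{1,2}$, (\ref{eqn-4.16}) and (\ref{eqn-4.31}) are jointly consistent at every insertion of $Q_2$, and that the $\sigma T$ contribution to $R$ does not degrade the gain when $\sigma$ is close to its maximal admissible value.
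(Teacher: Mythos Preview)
Your proposal is correct and follows essentially the same approach as the paper. The paper does not give a separate proof block for Proposition~\ref{prop-4.2}; the argument is the discussion in Subsection~\ref{sect-4.2.2} immediately preceding the statement (``Thus we arrive to the following statement:''), and you have accurately reconstructed it: insert $Q_2$ via Proposition~\ref{prop-3.3}, bound each factor $\bar{G}^\pm R Q_2$ by $C(T^2+\sigma T+h)Th^{-1}$, and observe that this is $\le h^\delta$ under \textup{(\ref{eqn-4.16})} and \textup{(\ref{eqn-4.31})}. One small terminological point: the $\sigma T$ contribution comes from the part of $R$ that is \emph{linear} in $x_1$ (since $x_1$ is frozen at $0$ in $\bar{\mathfrak{A}}$, while only $x'$ is frozen at the midpoint $w'$), not from a mixed $x_1$--$w$ derivative; but your estimate and its use are correct.
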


Thus (\ref{eqn-4.28}) without the last term becomes an asymptotic series. Now let us calculate $e_h^{1,\T} (x,y,\tau)$ under these assumptions. 

\begin{proposition}\label{prop-4.3}
In the framework of Proposition~\ref{prop-4.2} 
\begin{multline}
F_{t\to h^{-1}\tau} \Bigl(\bar{\chi}_T(t) u_h^{1\,\pm} \,^t\!Q_1 \Bigr)\\
\sim 
\sum_{n\ge 0} h^{-d+n}  \int T\hat{\bar{\chi}} \bigl( \frac{(\tau-\tau')T}{h} \bigr) \,d\tau'  \int d\xi'\int _{\gamma_+} d\xi_1 \int_{\gamma_-} d\eta_1\\
\times
e^{ih^{-1}(x_1\xi_1- y_1\eta_1 +\langle x'-y',\xi'\rangle)}
F_{n}(w, \xi',\xi_1,\eta_1,\tau')[q (w,\xi')] 
\label{eqn-4.32}
\end{multline}
for $ \tau \in \bC_\mp $, where $\gamma_\pm$ are closed contours in $\bC_\pm $ passing once
around the poles of $\bigl(\tau - a(z,\xi _1)\bigr)^{-1}$ lying in $\bC_\pm$ and not passing around the poles lying in $\bC_\mp$; for real $z$ and
$\Im \tau \ne 0$ there is no real pole, and $F_n$ are sums of the terms
\begin{multline}
\xi_1^j \eta_1^k W _{n,l,m,p, j,k}(w,\xi ',\tau )\\ 
\times
\bigl(\tau -a(w,\xi _1,\xi ')\bigr)^{-l}  \bigl(\tau -a(w,\eta_1,\xi ')\bigr)^{-m}  \bigl(\tau -b(w,\xi ')\bigr)^{-p}
\label{eqn-4.33}
\end{multline}
with
\begin{gather}
b(w,\xi')= a(w, 0, \xi'),
\label{eqn-4.34}\\[2pt]
l\ge 1, \; m\ge 1,\;  2(l+m +p )\le 3n +j+k+3, \; j\le l,\ k\le m  
\label{eqn-4.35}
\end{gather}
and uniformly smooth $W _{n,l,m,p, j,k}$.
\end{proposition}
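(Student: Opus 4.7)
The plan is to start from the truncated successive approximation formula \textup{(\ref{eqn-4.28})}, whose remainder is already $O(h^s)$ by Proposition~\ref{prop-4.2}. Into each copy of $u_h^{0\,\pm}$ we substitute the analogous free-space expansion \textup{(\ref{eqn-4.7})} with $\bar{G}^{0\,\pm}$, $G^{0\,\pm}$ in place of $\bar{G}^\pm$, $G^\pm$; again the full parametrix tail is absorbed into $O(h^s)$. The result is a finite sum, indexed by $n$, of terms of schematic form $(\bar{G}^\pm R)^{n_1} \bar{G}^{\prime\,\pm}\eth B (\bar{G}^{0\,\pm} R)^{n_2} \bar{G}^{0\,\pm} \updelta(x-y)\updelta(t)\,^t\!Q_1$ with $n_1 + n_2 = n$, each of which we shall convert into an explicit oscillatory integral.

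Since $\bar{A}=a(w,hD_x)$ has constant coefficients (frozen at $w=(0,w')$), the parametrices admit exact Fourier representations after a partial Fourier transform in $(t,x')$ and reduction to an ODE in $x_1$. Writing $\updelta(x-y)\updelta(t)$ by Fourier inversion and applying $\bar{G}^{0\,\pm}$ produces an integral over $\eta_1\in \bR$ of $e^{ih^{-1}(-y_1\eta_1+\cdots)}\bigl(\tau'-a(w,\eta_1,\xi')\bigr)^{-1}$, which, for $\tau\in\bC_\mp$, we deform to the contour $\gamma_-$. For the half-space parametrices $\bar{G}^{\prime\,\pm}$ and $\bar{G}^\pm$, the fact that $a(w,\xi_1,\xi')$ is a quadratic polynomial in $\xi_1$ yields precisely one pole in each half-plane; the Dirichlet/Neumann boundary conditions are solved explicitly by taking the difference of the free resolvent and its reflection, producing the contour $\gamma_+$ integral in $\xi_1$ against $e^{ih^{-1}x_1\xi_1}$. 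The key point is that imposing the boundary condition $\eth B u = -\eth B u_h^{0\,\pm}$ involves dividing by the restriction of the boundary symbol, which for frozen coefficients and either Dirichlet ($B=I$) or Neumann ($B=hD_1$) is a constant (respectively $1$ or $i\xi_1^+(w,\xi',\tau')$). After combining the boundary restriction and reflection, the $\xi_1$ and $\eta_1$ integrations decouple up to the single ``shared'' denominator $(\tau-b(w,\xi'))^{-1}$, where $b(w,\xi')=a(w,0,\xi')$ arises as the leading part of the boundary symbol (equivalent to the product of the two residues of $\xi_1\mapsto (\tau-a(w,\xi_1,\xi'))^{-1}$).

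Next I would iterate. Each factor $R=\mathfrak{A}-\bar{\mathfrak{A}}$, as explained in Subsection~\ref{sect-4.1.2}, is a sum of monomials $z^\alpha(h\partial_w)^\beta a_{\alpha\beta}(w,hD_z)$ with $|\alpha|+|\beta|\ge 2$; the factors $z^\alpha=(x-y)^\alpha$ integrate by parts as $(h\partial_\xi)^\alpha$ on the amplitude, the $(h\partial_w)^\beta$ differentiate smooth coefficients, and the polynomial in $hD_z$ contributes a factor of $\xi^\gamma$ of degree $\le 2$. Applying this to the integrand above, each new application of $(\bar{G}^\pm R)$ multiplies by one new factor $(\tau'-a(w,\xi_1,\xi'))^{-1}$ or $(\tau'-a(w,\eta_1,\xi'))^{-1}$, and each $h\partial_\xi$ raises the pole order of the already present denominator by at most one, with a compensating power of $\xi_1$ or $\eta_1$ in the numerator. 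A straightforward bookkeeping of how many $(\tau'-a)^{-1}$ factors each iteration can produce, versus how many $\xi_1^j\eta_1^k$ can accumulate, yields exactly the constraints $l\ge 1$, $m\ge 1$, $j\le l$, $k\le m$, $2(l+m+p)\le 3n+j+k+3$: the ``$3n$'' reflects that one unit of $|\alpha|+|\beta|\ge 2$ is always absorbed by the gain from the improved successive approximation (this is precisely what \textup{(\ref{eqn-4.31})} controls), exactly as in Subsection~\ref{sect-4.1.2}. Finally, the Fourier inversion in $t$ collapses to the factor $T\hat{\bar{\chi}}((\tau-\tau')T/h)$ after applying $\bar{\chi}_T(t)$ and using the Plancherel pairing.

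The main obstacle will be the last combinatorial step: tracking, under iteration, the simultaneous growth of the two independent $\xi_1,\eta_1$-denominators and of the shared $(\tau-b(w,\xi'))^{-1}$ boundary denominator, while extracting the cancellations that keep $j\le l$ and $k\le m$. These cancellations come from the explicit structure of the half-space Green's kernel (residue at $\xi_1^+$ minus residue at $\xi_1^-$), and from the fact that $\bar{G}^{\prime\,\pm}$ brings in only one extra $(\tau-b)^{-1}$ per occurrence rather than one per $R$; this must be checked cleanly to obtain the sharp inequality \textup{(\ref{eqn-4.35})}. Everything else is direct analogue of the calculation carried out in the interior (Subsection~\ref{sect-4.1.2}), adapted to the half-space parametrices.
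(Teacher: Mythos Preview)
Your overall architecture---substitute the free-space expansion into \textup{(\ref{eqn-4.28})}, pass to Fourier variables, and bookkeep the denominators---is correct and matches the paper's route. But there is a real gap in your identification of the perturbation $R$, and it is exactly the step that produces the sharp constraint \textup{(\ref{eqn-4.35})}.

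You describe $R$ as a sum of monomials $z^\alpha(h\partial_w)^\beta$ with $|\alpha|+|\beta|\ge 2$, citing Subsection~\ref{sect-4.1.2}. That is the \emph{interior} structure, where one freezes at the full midpoint $w=\tfrac12(x+y)$. Near the boundary the freezing point is $w=(0,w')$ with $w'=\tfrac12(x'+y')$ (see \textup{(\ref{eqn-4.30})}), while $x_1,y_1$ are kept as separate variables. Consequently $R$ contains, in addition to the quadratic terms in $(z',hD_{w'})$, terms that are \emph{linear} in $x_1$; the paper states this explicitly (``The main part of the perturbation $R$ is quadratic with respect to $z',hD'_w$ and linear with respect to $x_1$''). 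Your bookkeeping, based on $|\alpha|+|\beta|\ge2$ throughout, would therefore miscount and not recover \textup{(\ref{eqn-4.35})}.

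The paper handles the two kinds of factors by two different commutator moves. Tangential factors $z_r,\,hD_{w_r}$ ($r\ge2$) are pushed to the \emph{right} towards $\updelta(z')$ via \textup{(\ref{eqn-4.38})} and \textup{(\ref{eqn-4.40})}; each such pair, accompanied by one $\bar{G}$, increments $(m+p)$ (or $(l+p)$) by $3/2$ per unit of $h$---this is the ``improved'' gain you invoke. Factors $x_1$, by contrast, are pushed to the \emph{left}: towards $\eth B$ on the free-space side (Part~(b) of the paper's proof), where they either annihilate (Dirichlet) or cancel against $h\partial_{x_1}$ (Neumann), and via the identities \textup{(\ref{eqn-4.41})}--\textup{(\ref{eqn-4.42})} on the half-space side. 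Each $x_1$ commutation gains $h\xi_1$ (or $h\eta_1$) and increments the pole order by $2$, i.e.\ the ratio is $2$ rather than $3/2$. The final count (Part~(f)) is $n=n'+j+k$, $l+p=l'+2j$, $m=m'+2k$, $l'+m'\le\tfrac32 n'+2$, which combines to \textup{(\ref{eqn-4.35})}. Your Fourier-side integration-by-parts can be made to work, but you must treat $x_1$ (which becomes $h\partial_{\xi_1}$ acting on a \emph{contour} integral over $\gamma_+$) separately from $z'$, and the left-versus-right asymmetry is precisely what generates the $j,k$ indices with $j\le l$, $k\le m$.
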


\begin{proof}
Proof follows the proof of  Theorem~\ref{monsterbook-thm-7-2-13} of \cite{monsterbook} with the some modifications, mainly introduced in Subsection~\ref{sect-4.1.2}. We rewrote $A$ as $\mathfrak{A}$ by (\ref{eqn-4.14}) and defined $\bar{\mathfrak{A}}$ by (\ref{eqn-4.15}) with the exception that $w'$ and $z'$ are now $(d-1)$-dimensional variables (see (\ref{eqn-4.30})) and we preserve $x_1$ and $y_1$. 

The main part of the perturbation $R$ is  quadratic  with respect to $z', hD'_w$ and linear with respect to $x_1$. 
 In comparison with (\ref{monsterbook-7-2-60}) of \cite{monsterbook} formula (\ref{eqn-4.33}) is simpler, because we have scalar factors here and products collapse into powers.
On the other hand, instead of claiming that $W$ of that proof are holomorphic satisfying (\ref{monsterbook-7-2-38}) of \cite{monsterbook} here we do it in much explicit way since now we need to follow very carefully what are relations between $j,l,l, m, p$ and $n$.

We have operators $a(w, hD_1, hD'_z, hD'_w)$ with symbols which do not depend on $x$. Let us make  $h$-Fourier transform by $x'$ and $t$.

Consider first $u^0_h(x,y,t)$. 
We claim that
\begin{multline}
F_{t\to h^{-1}\tau} \Bigl(\bar{\chi}_T(t) \eth (hD_{x_1})^r u_h^{0\,\pm} \,^t\!Q_1 \Bigr)\\
\sim 
\sum_{n\ge 0} h^{1-d+n}  \int T\hat{\bar{\chi}} \bigl( \frac{(\tau-\tau')T}{h} \bigr) \,d\tau'  \int d\xi'\int_{\gamma_-} d\eta_1\\
\times
e^{ih^{-1}(- y_1\eta_1 +\langle x'-y',\xi'\rangle)}
F^0_{n}(w, \xi', \eta_1,\tau')[q (w,\xi')] 
\label{eqn-4.36}
\end{multline}
for $ \tau \in \bC_\mp$, $r=1$  and $F^0_n$ are sums of the terms
\begin{gather}
\eta_1^k W^0 _{n,m, j,k}(w,\xi ',\tau ) \bigl(\tau -a(w,\xi _1,\xi ')\bigr)^{-l}  \bigl(\tau -a(w,\eta_1,\xi ')\bigr)^{-m} 
\label{eqn-4.37}
\end{gather}
with $2m \le 3n +k+2-r$ and $ k< m $ with the only exception $n=0$, $m=1$, $k=r=1$. 

\begin{enumerate}[wide,label=(\alph*),  labelindent=0pt]
\item
Like in that proof of \cite{monsterbook},  each factor $z_r$ and $hD_{w_r}$  with $r =2,\ldots, d$ is moved to the right towards $\updelta (z')$,  using commutator relations
\begin{align}
&z_r\bar{G}^{0\,\pm} = \bar{G}^{0\,\pm}  z_r -  \bar{G}^{0\,\pm} [ \bar{\mathfrak{A}},z_r ]\bar{G}^{0\,\pm} 
\label{eqn-4.38}
\end{align}
which also hold for  $hD_{w_r}$. So, each such commuting adds factor $h$ and increases either $m$ or $p$ by $1$. However, each pair of  those factors are accompanied by  $\bar{G}^{0\,\pm}$, and therefore in this process  increment  of the power of $h$ by $1$ is ``paid'' by  increment   of $m$ with the factor $3/2$\,\footnote{\label{foot-5} Compare with the standard method, when the factor was $2$.}. On the other hand, commuting with $^t\!Q_1$, each increment  power of $h$ by $1$ is ``paid'' by increment of $(m+p)$ with the factor $1$.

\item
Also recall that  each factor $x_1$   is moved to the left, towards $\eth B$, also using   (\ref{eqn-4.38}) but for $x_1$.
In this commutator factor $h\eta_1$ is also gained and $m$ is increased by $1$   and also $\bar{G}^{0\,\pm}$ is applied so in the end  $m$ is increased by $2$. When $x_1$ reaches $\eth B$ the term  disappears for Dirichlet boundary condition, or this factor cancels with $B=h\partial_{x_1}$ for Neumann boundary condition (and factor $h$ is gained). process each increment  power of $h\eta_1$ by $1$ is ``paid'' by  increment   of $m$ with the factor $2$. Therefore (\ref{eqn-4.36})--(\ref{eqn-4.37}) has been proven. 

\item
Then for 
\begin{gather}
F_{t\to h^{-1}\tau} \Bigl(\bar{\chi}_T(t) \bar{G}^{\prime\,\pm}\eth   B  u_h^{0\,\pm} \,^t\!Q_1 \Bigr)
\label{eqn-4.39}
\end{gather}
formula (\ref{eqn-4.33}) holds with $l=1$, $p=0$  and $j=1,0$ for Dirichlet and Neumann boundary conditions respectively. 

\item
Again  each factor $z_r$ and $hD_{w_r}$  with $r =2,\ldots, d$ is moved to the right towards $\updelta (z')$,  using commutator relations
(\ref{eqn-4.38}) for $\bar{G}^\pm$ rather than for $\bar{G}^{0\,\pm}$ and also
\begin{align}
&z_r \bar{G}^{\prime\,\pm} = \bar{G}^{\prime\,\pm} z_r -  \bar{G}^\pm [ \bar{\mathfrak{A}},z_r]\bar{G}^{\prime\,\pm}
\label{eqn-4.40}
\end{align}
which  holds also for $hD_{w_r}$. Observe that  that $p$ can increase but  again as in (a) increment  of the power of $h$ by $1$ is ``paid'' by  increment   of $m+p$ with the factor $3/2$\,\footnote{\label{foot-6} Indeed, it is sufficient to consider boundary value problem for ODE $(\tau-D^2)v=f$ with $f(x)= \int_{\gamma_+}\xi ^j (\tau-\xi^2)^{-l}e^{ix \xi }\,d\xi$. One can see easily that
\begin{gather*}
v(x)= \int_{\gamma_+}\Bigl( \xi ^j (\tau-\xi^2)^{-l-1} - \kappa \xi^{j'} \tau ^{(j-j')/2 -l } (\tau- \xi^2)^{-1} \Bigl) e^{ix \xi }\,d\xi
\end{gather*}
satisfies this equation and $v(0)=0$ for some constant $\kappa = \kappa_{\D jl}$ and  $j'\equiv j \mod 2$ and $j'=0,1$. Also $v'(0)=0$ with some other constant
$\kappa = \kappa_{\N jl}$.}.

\item
Also recall that  each factor $x_1$ which is to the left from $\bar{G}^{\prime\,\pm}\eth B$  is moved to the right, towards $\bar{G}^{\prime\,\pm}\eth B$ but instead of 
(\ref{eqn-4.38}) we use
\begin{align}
&x_1\bar{G}_* ^\pm\ = \bar{G}_\D^\pm x_1 -  \bar{G}_\D^\pm [ \bar{\mathfrak{A}},x_1]\bar{G}_*^\pm,
\label{eqn-4.41}\\
&x_1\bar{G}_*^{\prime\,\pm} =   -  \bar{G}_\D^\pm [ \bar{\mathfrak{A}},x_1]\bar{G}_*^{\prime\,\pm}
\label{eqn-4.42}
\end{align}
with $*=\D,\N$ (in particular, for $\bar{G}_\D$ (\ref{eqn-4.38}) holds). 

Again, similarly to  (c) either $l+p$ is increased by $2$ and $j$ does not change, or $j$ is decreased by $1$ and $l+p$ is increased by $1$.

\item
Then we have $n=n'+j+k$ with $l+p=l' +2j$, $m=m'+2k$ and $l'+m' \le \frac{3}{2}n'+2$. 
Those are ``the worst case  scenarios'' when we consider the main part of perturbation $R$, quadratic by $(x'-y')$ and linear by $x_1$. For the rest of perturbation increments of $l+p,m$ are relatively smaller. In the end we arrive to (\ref{eqn-4.35}).

\item
One can see easily that for the main term we have $l=m=1$, $p=0$ and $j+k=1$ with $j=0,1$ for Dirichlet and Neumann problems respectively and $W_{*}=\const $. 
\end{enumerate}
\vspace{-\baselineskip}
\end{proof}

\begin{proposition}\label{prop-4.4}
In the framework of Proposition~\ref{prop-4.2}
\begin{multline}
F_{t\to h^{-1}\tau} \Bigl(\bar{\chi}_T(t) u_h^{1\,\pm} \,^t\!Q_1 \Bigr)\\
\sim 
\sum_{n\ge 0, j\ge 0, k\ge 0} h^{-d+n}  \int T\hat{\bar{\chi}} \bigl( \frac{(\tau-\tau')T}{h} \bigr) \,d\tau'  \int d\xi'\int _{\gamma_+} d\xi_1 \\
\times
e^{ih^{-1}((x_1+y_1)\xi_1+\langle x'-y',\xi'\rangle)}
F_{n,j,k}(w, \xi',\xi_1,\tau')[q (w,\xi')] 
\label{eqn-4.43}
\end{multline}
for $\tau\in \bC_\mp$  where $F_{n,j,k}$ are sums of the terms
\begin{gather}
\xi_1^j \eta_1^k W _{n,l,j,k}(w,\xi ',\tau ) \bigl(\tau -a(y',\xi _1,\xi ')\bigr)^{-l}  
\label{eqn-4.44}
\end{gather}
with $1\le l\le \frac{3}{2}n +j+k+1$ and uniformly smooth $W _{n,l,j,k}$ and also
\begin{multline}
F_{t\to h^{-1}\tau} \Bigl(\bar{\chi}_T(t) u_h^{1} \,^t\!Q_1 \Bigr)\\
\sim 
\sum_{n\ge 0, j\ge 0, k\ge 0} h^{-d+n}  \int T\hat{\bar{\chi}} \bigl( \frac{(\tau-\tau')T}{h} \bigr) \,d\tau'  \int d\xi\\
\times
e^{ih^{-1}((x_1+y_1)\xi_1+\langle x'-y',\xi'\rangle)}
\cF_{n,j,k}(w, \xi',\xi_1,\tau')[q (w,\xi')],
\label{eqn-4.45}
\end{multline}
where $\cF_{n,j,k}(w, \xi',\xi_1,\tau') = F_{n,j,k}(w, \xi',\xi_1,\tau'-i0) - F_{n,j,k}(w, \xi',\xi_1,\tau'+i0)$.
\end{proposition}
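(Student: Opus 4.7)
The plan is to obtain (\ref{eqn-4.43})--(\ref{eqn-4.45}) by collapsing the double-contour representation of Proposition~\ref{prop-4.3}. The key structural input is (\ref{eqn-3.5}): $g^{1k}=\updelta_{1k}$ and $V_1=0$ make $a(w,\xi_1,\xi')$ even in $\xi_1$, so the two $\xi_1$-roots of $\tau=a(w,\xi_1,\xi')$ are $\pm\xi_1^+(w,\xi',\tau)$, with $\gamma_+$ encircling $+\xi_1^+$ and $\gamma_-$ encircling $-\xi_1^+$. Geometrically this expresses the method-of-images reflection on the half-space for the Dirichlet or Neumann parametrices of $\bar{A}$: an incoming $x_1$-momentum $\eta_1=-\xi_1^+$ reflects to outgoing $\xi_1=\xi_1^+$, with total accumulated phase $(x_1+y_1)\xi_1^+$ rather than $x_1\xi_1^+-y_1\eta_1^-$.

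To implement this analytically, first perform the change of variables $\eta_1\mapsto-\xi_1'$ in the $\eta_1$-integration of (\ref{eqn-4.32})+(\ref{eqn-4.33}). By evenness $a(w,-\xi_1',\xi')=a(w,\xi_1',\xi')$; the contour $\gamma_-$ is mapped bijectively and orientation-preservingly onto $\gamma_+$; $d\eta_1=-d\xi_1'$; and $e^{-ih^{-1}y_1\eta_1}$ becomes $e^{ih^{-1}y_1\xi_1'}$. The resulting $\gamma_+\times\gamma_+$ integral carries the compound phase $e^{ih^{-1}[x_1\xi_1+y_1\xi_1'+\langle x'-y',\xi'\rangle]}$. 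Next, evaluate the $\xi_1'$-integral by residues at $\xi_1'=\xi_1^+$, yielding $e^{ih^{-1}y_1\xi_1^+}G(\xi_1^+,\xi',\tau;w)$ with $G$ rational in $\xi_1^+$; negative powers of $\xi_1^+$ combine with $(\tau-b(w,\xi'))^{-p}$ via $(\xi_1^+)^2=\tau-b$ (on the root) into uniformly smooth factors. Then re-express
\[
e^{ih^{-1}y_1\xi_1^+}g(\xi_1^+)=\frac{1}{2\pi i}\oint_{\gamma_+}e^{ih^{-1}y_1\xi_1}\,g(\xi_1)\,\frac{-2\xi_1^+}{\tau-a(w,\xi_1,\xi')}\,d\xi_1
\]
and merge with the original $\xi_1$-contour, producing the single $\gamma_+$-integral with phase $e^{ih^{-1}[(x_1+y_1)\xi_1+\langle x'-y',\xi'\rangle]}$ and integrand of the form (\ref{eqn-4.44}). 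In that form, $\eta_1$ is a symbolic parameter equal to $-\xi_1^+$ encoding the incoming momentum; the change of base point from $w$ to $y'$ is a Taylor expansion whose remainder contributes to higher orders in $n$ within $W_{n,l,j,k}$.

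The third step is the combinatorial bookkeeping of $(l,m,p,j,k)$: the residue in $\xi_1'$ consumes $(\tau-a(w,\xi_1',\xi'))^{-m}$ and contributes up to $m-1$ derivatives, while the contour re-expression adds one simple pole in $\xi_1$; starting from $2(l+m+p)\le 3n+j+k+3$ in (\ref{eqn-4.35}), this reproduces $1\le l\le \tfrac{3}{2}n+j+k+1$ in (\ref{eqn-4.44}). Finally (\ref{eqn-4.45}) follows from (\ref{eqn-4.43}) by the standard jump formula: writing $u_h^1=u_h^{1,+}+u_h^{1,-}$ with $u_h^{1,\pm}$ analytic in $\tau\in\bC_\mp$ and taking boundary values on the real axis yields $\cF_{n,j,k}(\tau')=F_{n,j,k}(\tau'-i0)-F_{n,j,k}(\tau'+i0)$. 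The main obstacle will be this precise index bookkeeping in the third step, which parallels the count performed in the proof of Theorem~\ref{monsterbook-thm-7-2-13} of \cite{monsterbook} already invoked in the proof of Proposition~\ref{prop-4.3}.
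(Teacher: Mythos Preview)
Your proposal follows essentially the same route as the paper's proof: collapse the double contour $\gamma_+\times\gamma_-$ of Proposition~\ref{prop-4.3} to a single $\gamma_+$-integral by residue calculus, using the evenness of $a(w,\cdot,\xi')$ in $\xi_1$ forced by (\ref{eqn-3.5}), and then pass to (\ref{eqn-4.45}) via the boundary-value jump. The paper compresses all of this into the single sentence ``calculating residues at zeroes of $(\tau-a(x,\xi_1,\xi'))$ and $(\tau-a(x,\eta_1,\xi'))$,'' whereas you spell out the change of variables $\eta_1\mapsto-\xi_1'$ and the re-expression explicitly; your treatment of (\ref{eqn-4.45}) via the $u_h^{1,+}+u_h^{1,-}$ decomposition matches the paper's ``replace $\gamma_+$ by $\bR$'' verbatim in content.

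One point worth tightening: your ``merge'' step takes the evaluated $\xi_1'$-residue $e^{ih^{-1}y_1\xi_1^+}g(\xi_1^+)$, re-expresses it as a $\gamma_+$-integral, and then combines with the remaining $\oint_{\gamma_+}e^{ih^{-1}x_1\xi_1}f(\xi_1)\,d\xi_1$. When both $l,m\ge2$ in (\ref{eqn-4.33}), the two residues separately produce polynomial factors in $x_1/h$ and $y_1/h$, not in $(x_1+y_1)/h$; naively multiplying the two contour integrals does not give a single integral with phase $(x_1+y_1)\xi_1$. The clean way---which is what the paper's terse ``calculating residues in (\ref{eqn-4.33}) and (\ref{eqn-4.43})'' is presumably pointing at---is to evaluate \emph{both} contour integrals down to functions of $\xi_1^+$ and then recognise the result as the residue of an integrand of the shape (\ref{eqn-4.44}); the index bound $l\le\tfrac32 n+j+k+1$ then falls out of (\ref{eqn-4.35}) exactly as you indicate. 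The paper's proof is equally silent on this mechanism, so your level of detail already exceeds it, but the word ``merge'' undersells what actually has to happen there.
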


\begin{proof}
(\ref{eqn-4.43})--(\ref{eqn-4.44})  follows  from Proposition~\ref{prop-4.3}  by calculating residues at zeroes of $(\tau -a(x,\xi_1,\xi'))$ and  $(\tau -a(x,\eta_1,\xi'))$ in (\ref{eqn-4.33}) and (\ref{eqn-4.43}).

Next we observe that in (\ref{eqn-4.43}) we can replace integral along $\gamma_+$ by integral along $\bR$. Then (\ref{eqn-4.45}) follows trivially.
\end{proof}

\begin{proposition}\label{prop-4.5}
Let $\xi$-microhyperbolicity condition be fulfilled at energy level $\tau$ and let 
\begin{gather}
\eff(x)+\eff(y) \le \sigma \ell
\label{eqn-4.46}\\
\shortintertext{with}
\ell(x,y)\le  h^{\frac{1}{3}+\delta}, \quad \sigma = h^{\frac{1}{2}+\delta }\ell^{-\frac{1}{2}}.
\label{eqn-4.47}
\end{gather}
Let $T=T^* \ge  C_0\ell (x,y)$. Then for any $T\colon T^*\le T\le \epsilon$
\begin{enumerate}[wide,label=(\roman*),  labelindent=0pt] 
\item
As $d\ge 3$ estimate 
\begin{gather}
|e^{1,\T}_{T,h} (x,y,\tau) -[\mp]  e_h^{0,\W}(x,\tilde{y},\tau)|\le Ch^{1-d}
\label{eqn-4.48}
\end{gather}
with sign $[\mp ]$ for Dirichlet and Neumann boundary conditions respectively.
\item
As $d=2$ estimate 
\begin{gather}
|e^{1,\T}_{T,h} (x,y,\tau) -[\mp]  e_h^{0,\W}(x,\tilde{y},\tau)- e_{h,\corr} (x,y,\tau)|\le Ch^{-1}
\label{eqn-4.49}
\end{gather}
with $e_{h,\corr} (x,y,\tau)$ given by \textup{(\ref{eqn-4.54})} below.
\end{enumerate}
\end{proposition}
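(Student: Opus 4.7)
The plan is to apply the asymptotic expansion of Proposition~\ref{prop-4.4} to the Tauberian expression (\ref{eqn-1.12}), isolate the leading $n=0$ term as the half-space Weyl reflection $[\mp]e_h^{0,\W}(x,\tilde y,\tau)$, and estimate every higher-order contribution by stationary phase. First I would verify that under the hypothesis (\ref{eqn-4.46})--(\ref{eqn-4.47}) the choice $T=C_0\ell$ together with $\rho=\sigma = h^{\frac{1}{2}+\delta}\ell^{-\frac{1}{2}}$ meets the constraints of Proposition~\ref{prop-4.2}: condition (\ref{eqn-4.16}) holds because $\ell\le h^{\frac{1}{3}+\delta}$, and (\ref{eqn-4.31}) holds because $\sigma\ell^2 = h^{\frac{1}{2}+\delta}\ell^{\frac{3}{2}}\le h^{1+\delta}$. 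The microlocal cut-off $Q_1$ supported in $\Omega_{T,\rho,\sigma}$ can then be inserted via Proposition~\ref{prop-3.3} up to $O(h^s)$, and strong convexity of the frozen characteristic variety $\Sigma(w,\tau)$ is inherited from the $\xi$-microhyperbolicity, making the expansion (\ref{eqn-4.45}) applicable on the support of $Q_1$.

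Next I identify the $n=0$ contribution in (\ref{eqn-4.45}). By step (g) of the proof of Proposition~\ref{prop-4.3} this term has $l=m=1$, $p=0$, $j+k=1$ and $W_*=\const$, with $j=1,0$ in the Dirichlet/Neumann cases respectively. Performing the $\tau'$-integration, summing the residue along $\gamma_+$, and recognizing the result as the exact Schwartz kernel of the spectral projector of the frozen constant-coefficient boundary value problem for $\bar A = a(w,hD_x)$ on $\{x_1>0\}$, one obtains (by the method of images) $e_h^{0,\W}(x,y,\tau)[\mp]e_h^{0,\W}(x,\tilde y,\tau)$; the free-space piece is already carried by $e^{0,\T}_h$, leaving the reflected piece $[\mp]e_h^{0,\W}(x,\tilde y,\tau)$. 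Each term in (\ref{eqn-4.45}) with $n\ge 1$ becomes, after residue in $\xi_1$, an oscillatory integral in $\xi'$ with phase $h^{-1}\bigl(\langle x'-y',\xi'\rangle + (x_1+y_1)\zeta(w,\xi',\tau)\bigr)$, amplitude of order $h^{-d+n}$, and polynomial prefactor $(x'-y')^\alpha(x_1+y_1)^\beta$ of degree controlled by (\ref{eqn-4.35}). Stationary phase in $\xi'$ produces the factor $(h/\ell)^{(d-1)/2}$, so each such term is bounded by $Ch^{-(d+1)/2+n-|\alpha|}\ell^{(1-d)/2+|\alpha|}$ --- the same estimate as (\ref{eqn-4.12}). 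Under $\ell\le h^{\frac{1}{3}+\delta}$ this is $O(h^{1-d})$ for every $n\ge 1$ when $d\ge 3$, proving (i).

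For $d=2$ the stationary phase factor degrades to $(h/\ell)^{\frac{1}{2}}$, so the previous bound reads $Ch^{-\frac{3}{2}+n-|\alpha|}\ell^{-\frac{1}{2}+|\alpha|}$ and may exceed $O(h^{-1})$ for small $n$ with maximal admissible $|\alpha|$ when $\ell\gg h$. I would collect exactly these slowly-decaying contributions --- the $n=1$ term and, systematically, those higher-$n$ terms saturating the constraint $2(l+m+p)\le 3n+j+k+3$ from (\ref{eqn-4.35}) --- into the explicit correction $e_{h,\corr}(x,y,\tau)$, written as an oscillatory integral to be defined by (\ref{eqn-4.54}) below. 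The remaining terms satisfy the improved bound $O(h^{-1})$, establishing (ii). The main obstacle is the bookkeeping in the last step: verifying that the $n=0$ residue reproduces $[\mp]e_h^{0,\W}(x,\tilde y,\tau)$ in the frozen coefficients at $w=(0,\frac{1}{2}(x'+y'))$ rather than giving only an approximation, and checking that the borderline sub-principal contributions assemble into a single closed-form correction (\ref{eqn-4.54}) without delicate cancellations between residues at the two branches of $\tau=a(w,\xi_1,\xi')$.
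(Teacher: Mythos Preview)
Your broad strategy---apply Proposition~\ref{prop-4.4}, isolate the $n=0$ term as the reflected Weyl piece, and bound the rest---matches the paper, but the estimate you invoke for the higher-order terms would not close, and this is exactly what blocks the $d=2$ case. The bound (\ref{eqn-4.12}) you cite comes from the \emph{interior} improved method, where after integration by parts the amplitudes $W_{n,\alpha}$ are uniformly smooth on $\Sigma(w,\tau)$. In the boundary expansion (\ref{eqn-4.33})--(\ref{eqn-4.35}) each term carries factors $(\tau-b(w,\xi'))^{-p}$ which are singular on the tangential set $\{\xi_1=0\}$; after the $\xi_1$-residue the $\xi'$-amplitude retains negative powers of $\tau-b$, and stationary phase in $\xi'$ alone cannot control these. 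What bounds them is the microlocal cut-off, which forces $|\tau-b|\gtrsim\rho$ on $\supp Q_1$; the genuine per-iteration factor is then $\|\bar G^\pm R Q_2\|\lesssim(\sigma\ell+\ell^2+h)\ell h^{-1}\asymp\rho^{\frac12}\ell^2 h^{-1}$, giving the paper's estimate (\ref{eqn-4.52}). With this bound \emph{all} terms with two or more iterations are $O(h^{1-d+\delta''})$ for every $d\ge2$ once $\ell\le h^{\frac13+\delta}$, and the $O(\ell^2)$-piece of $R$ in the first iteration is likewise negligible by (\ref{eqn-4.53}). Hence only \emph{one} term survives to form $e_{h,\corr}$: the first iteration of the $x_1$-linear part of $R$, computed explicitly as (\ref{eqn-4.54}). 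Your worry about collecting ``higher-$n$ terms saturating the constraint'' is an artifact of using the wrong estimate.

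There are two further gaps. First, you conflate the $\sigma=h^{\frac12+\delta}\ell^{-\frac12}$ of the hypothesis (which bounds $x_1+y_1$ at the final points) with the microlocal $\sigma=\rho^{\frac12}+T$ of (\ref{eqn-3.14}); the paper takes $\rho=\min((h/\ell)^{\frac12}h^{-\delta},C_2)$ for small $\ell$, not your $\rho=h^{\frac12+\delta}\ell^{-\frac12}$. Second, for $d=2$ this choice fails (\ref{eqn-4.31}) once $\ell\ge h^{\frac12-\delta}$, so on the range $h^{\frac12-\delta}\le\ell\le h^{\frac13+\delta}$ the paper switches to $\rho=h^{1-\delta}\ell^{-1}$; since this may violate $\rho\ge C_0\ell$, the two points $\bar\xi^{\prime\pm}$ are treated separately via gauge transformations $\xi_2\mapsto\xi_2-h^{-1}\alpha_\pm z_2$ restoring (\ref{eqn-3.11}). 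For $d\ge3$ that upper range is instead dispatched by Corollary~\ref{cor-3.9}, since both $e^{1,\T}_h$ and $e^{0,\W}_h(x,\tilde y,\tau)$ are already $O(h^{1-d})$ there.
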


\begin{proof}
\begin{enumerate}[wide,label=(\alph*),  labelindent=0pt] 
\item
Consider first $\ell \le h^{\frac{1}{2}-\delta}$. Let us take $\rho =\min ( (h/\ell)^{\frac{1}{2}}h^{-\delta} ,\, C_2)$.  Then $\rho\ge C_0\ell$,  $\sigma=\rho^{\frac{1}{2}}$ and
\begin{gather}
\sigma \ell^2 \le h^{1+\delta} \qquad \text{as\ \ } \ell\le h^{\frac{3}{7}+\delta}
\label{eqn-4.50} 
\end{gather}
and the method of successive approximations works. Let us take take $\rho$-admissible partition of unity by $\zeta'$ in $\rho$-vicinity of $\Sigma(w,\tau)$. 
In this case $\rho \ge \ell$ and $\sigma=\rho^{\frac{1}{2}}$. 
Moreover, as $\rho=C_2$ it covers the whole domain
$\{\zeta'\colon b(w,\zeta')<2\tau\}$ and as $\rho < C_2$ and therefore $\ell \ge h^{1-2\delta}$ and also $\rho^2 \ell \ge h^{ 1-2\delta}$ we can use the fact that

\begin{claim}\label{eqn-4.51}
If $\rho \ell \ge h^{1-\delta}$, $\rho \ge C_0\ell^2$, (\ref{eqn-3.11}) is fulfilled  and $Q_1$ is an operator with the symbol equal $0$ in $\rho$-vicinity of $\Sigma(w,\tau)$ and $\ell\ge h^{1-\delta}$ then
\begin{gather*}    
e^{1,\T}_h(x,y,\tau) \,^t\!Q_{1,z} =O(h^s)
\end{gather*}
\end{claim}
\vspace{-\baselineskip}
which follows from Section~\ref{sect-3}. Therefore we can take cut-off operator $Q_1=I$. 

Then the main term of the final expression equals to $e^{0,\W}_h(x,\tilde{y},\tau)$ and is $\asymp h^{-d}(h\ell^{-1})^{(d+1)/2}$ and $n$-th term does not exceed
\begin{gather}
Ch^{-d-\delta'}\bigl(\frac{h}{\ell}\bigr)^{(d+1)/2} \times (\rho^{\frac{1}{2}}\ell^2 h^{-1})^{n-1} 
\label{eqn-4.52}
\end{gather}
due to the same microlocal arguments as above. One can see easily that expression  (\ref{eqn-4.52}) is $O(h^{1-d+\delta''})$ as $d\ge 2$ and $n\ge 3$ and $\ell \le h^{\frac{1}{3}+\delta}$. Furthermore, if we consider terms $O(\ell^2)$ in $R$ then the corresponding second term does not exceed 
\begin{gather}
Ch^{-d-\delta'}\bigl(\frac{h}{\ell}\bigr)^{(d+1)/2} \times  \ell^3 h^{-1})^{n-1} = O(h^{1-d+\delta''}).
\label{eqn-4.53}
\end{gather}
Therefore we need to consider only the second term in the successive approximations. It is equal to 
\begin{multline}
 [\mp] e_{h,\corr} (x,y,\tau)\\
\coloneqq  -\frac{1}{2}(2\pi h)^{-d} \int _{\Sigma (w,\tau)} \lambda (w',\xi') (x_1+y_1) e^{ih^{-1}\langle x-\tilde{y},\xi \rangle}\,d\xi :d_\xi a(w,\xi)
\label{eqn-4.54} 
\end{multline}
with
\begin{gather}
\lambda(w',\xi')= a_{x_1} (x_1,w',\xi)|_{x_1=\xi_1=0}.
\label{eqn-4.55}
\end{gather}
Indeed, one can see easily that the second term in the successive approximation for 
$[\mp] F_{t\to h^{-1}\tau} \bigl(\bar{\chi}_T(t)u^{1\,\pm}_h (x,y,t) \bigr)$ is equal to
\begin{multline*}
\pm 2 h^3 (2\pi h)^{-1-d}  \int _{\gamma_+\times \bR} \lambda (w',\xi ') \xi_1 (\tau -a(x,\xi))^{-3} e^{i\langle x-\tilde{y},\xi \rangle} \, d\xi \\
= \mp \frac{i}{2}h^2  (2\pi h)^{-1-d}  \int _{\gamma_+\times \bR} \lambda (w',\xi ') (x_1+y_1)  (\tau -a(x,\xi))^{-2} e^{i\langle x-\tilde{y},\xi \rangle}  \qquad \mp  \Im \tau <0;
\end{multline*}
then the second term in decomposition for $[\mp] F_{t\to h^{-1}\tau} \bigl(\bar{\chi}_T(t)u^{1}_h (x,y,t) \bigr)$
is equal to
\begin{gather*}
-\frac{1}{2}h(2\pi h)^{-d} \int \lambda (w',\xi') (x_1+y_1) e^{ih^{-1}\langle x-\tilde{y},\xi \rangle}\updelta '(\tau-a(w,\xi))\,d\xi 
\end{gather*}
which implies (\ref{eqn-4.55}).

\item
Due  to stationary phase integral  in \ref{eqn-4.54}) does not exceed $Ch^{-d} \bigl(\frac{h}{\ell}\bigr)^{(d-1)/2}$, 
and  expression (\ref{eqn-4.54}) is $O(h^{1-d})$ as $d\ge 3$, $x_1+y_1\le \sigma \ell$.

Due to Corollary~\ref{cor-3.9} as $d\ge 3$ we need to consider only this case $\ell\le h^{\frac{1}{2}-\delta}$. Therefore Statement~(i)  has been proven.

\item
Let  $d=2$. Then  we need to consider also $h^{\frac{1}{2}-\delta}\le \ell \le h^{\frac{1}{3}+\delta}$ and with selected $\rho = (h/\ell)^{\frac{1}{2}}h^{-\delta}$ 
condition $\sigma \ell^2 \le h^{1+\delta}$ may fail. We need to consider lesser $\rho$. Let us pick up $\rho = h^{1-\delta}\ell^{-1}$. 
Then $\rho\ge C_0\ell^2$ and $\sigma \ell^2 \le h^{1+\delta}$ as $\ell\le h^{\frac{1}{3}-\delta}$.

However condition  $\rho \ge C_0\ell$ may fail and then we need to take care of (\ref{eqn-3.11})  at each point   of $ \Sigma (w,\tau)\cap \{\xi_1=0\}$. 
 
 The good news is that this set  consists of two points $\bar{\xi}^{\prime\pm}$ and we do not need $\rho^2 \ell \ge h^{1-\delta}$ because we can deal with each of these two points separately. Therefore we can replace  $\rho$-admissible operator $Q_1$ with the symbol, equal $1$ in $\rho$-vicinity of $\bar{\xi}^{\prime\pm}$ by operator with the symbol equal $1$ in $\epsilon$-vicinity of this point. Observing that the required gauge transformations are 
 $\xi_2\mapsto \xi_2 - h^{-1}\alpha_\pm z_2$, that are multiplications by $e^{ih^{-1}\alpha_\pm z_2^2/2}$ which do not affect $e(x,y,\tau)$ as $z_2=\frac{1}{2}(x_2-y_2)$. Therefore again we can take $Q_2=I$.

 Then  $n$-th term does not exceed (\ref{eqn-4.52}) which is $O(h^{-1+\delta''})$ as $n\ge 3$ in current settings. Again, if we consider $O(\ell^2)$  terms  in $R$ then the corresponding second term does not exceed (\ref{eqn-4.53}). Thus again we are left with expression (\ref{eqn-4.54}) but this time we cannot claim that it is $O(h^{-1})$ even for
 $\ell \le h^{\frac{1}{2}+\delta}$ and $\rho = h^{1-\delta'}\ell^{-1}$. Therefore Statement~(ii)  has been proven.
\end{enumerate}
\end{proof}

\chapter{Geometric optics method}
\label{sect-5}

\section{Constructing solution}
\label{sect-5.1}

\begin{proposition}\label{prop-5.1}
Let
\begin{gather}
\eff(x)+\eff(y)\ge C_0 \ell^2(x,y)
\label{eqn-5.1}
\end{gather}
and $\ell(x,y)\le \epsilon$. Then on the energy level $\tau$ there exists a single generalized Hamiltonian billiard  of the length $\le c\epsilon$ with at least one reflection from $\partial X$, from $y$ to  $x$ and one from $x$ to $y$, and each  has exactly one reflection and the incidence angles are $\asymp \bigl(\nu(x)+\nu(y)\bigr)\ell^{-1}(x,y)$ (so, they are   \emph{standard  Hamiltonian billiards}).
\end{proposition}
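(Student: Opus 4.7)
The plan is to work in the normalization \textup{(\ref{eqn-3.5})}, so that $X\cap B(0,1)=\{x_1>0\}\cap B(0,1)$, $\eff(x)=x_1$, and $\partial X=\{x_1=0\}$ is flat, and then to perturb off the constant-coefficient Euclidean half-space model. In that model the Hamiltonian flow on the energy surface is straight-line motion, and by the method of images, one-reflection billiards from $y$ to $x$ are in bijection with straight segments from $y$ to the mirror point $\tilde x=(-x_1,x')$. Thus there is a unique candidate, of length $|y-\tilde x|=\sqrt{(x_1+y_1)^2+|x'-y'|^2}\asymp \ell(x,y)$, and the sine of the angle its velocity makes with $\partial X$ at the reflection point equals $(x_1+y_1)/|y-\tilde x|\asymp (\eff(x)+\eff(y))/\ell(x,y)$. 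Under \textup{(\ref{eqn-5.1})} this is $\ge C'_0\,\ell(x,y)$, i.e.\ strictly transversal, matching the claimed incidence-angle estimate. The billiard from $x$ to $y$ is obtained by time-reversal.

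In the flat model, $k\ge 2$ reflections are impossible for straight rays with both endpoints in the open half-space, so the single candidate is the only billiard of any length. To transfer existence to the actual curved operator I would apply the implicit function theorem: the generalized Hamiltonian flow with reflection depends $\sC^{K-1}$-smoothly on $g^{jk}, V_j, V$ and on the initial data, while $\partial X$ deviates from flat by $O(|x'|^2)$; the transversality bound furnished by \textup{(\ref{eqn-5.1})} makes the flat reference billiard a nondegenerate solution of the two-point boundary problem, and strong convexity of $\Sigma(w,\tau)$ controls the Jacobi field along it. This delivers a unique nearby one-reflection curved billiard, with length and incidence angle equal to their flat values up to lower-order corrections, preserving both asserted asymptotic equivalences.

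To rule out $k\ge 2$ curved billiards of length $\le c\epsilon$, I would use quantitative propagation estimates over this short length scale: trajectories deviate from straight lines by $O(\epsilon^2)$ and $\partial X$ from flat by $O(\epsilon^2)$, so any would-be standard $k\ge 2$ billiard with strictly transversal reflections would lie in an $O(\epsilon^2)$-neighborhood of a corresponding flat one — and no such flat billiard exists. The main obstacle is making this exclusion uniform and quantitative in $\ell$ and in the distribution of reflection points, while simultaneously keeping the incidence-angle lower bound from \textup{(\ref{eqn-5.1})} intact: one must verify that $c$ can be chosen small enough and $C_0$ large enough so that the curvature-induced corrections never compensate for the geometric impossibility inherited from the flat model, across the full range $\ell\le \epsilon$.
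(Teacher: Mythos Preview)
Your proposal is correct and in fact supplies far more detail than the paper does: the paper's entire proof of Proposition~\ref{prop-5.1} reads ``The proof is obvious.'' Your reduction to the flat half-space model via the normalization \textup{(\ref{eqn-3.5})}, the method-of-images bijection with straight segments to $\tilde{x}=(-x_1,x')$, and the perturbative transfer to the curved setting is exactly the sort of argument the author is tacitly invoking. The incidence-angle computation $(x_1+y_1)/|y-\tilde{x}|\asymp(\eff(x)+\eff(y))/\ell(x,y)$ and the observation that \textup{(\ref{eqn-5.1})} forces this to be $\gtrsim\ell$, hence bounded below away from tangency, is the heart of the matter and you have it right.

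Your closing caveat about making the exclusion of $k\ge 2$ reflections uniform is prudent but not a genuine obstruction: once the boundary is flattened as in \textup{(\ref{eqn-3.5})} and trajectories on the energy level $\tau$ are rescaled by $\ell^{-1}$, the Hamiltonian flow differs from the straight-line flow by $O(\ell)$ in $\sC^1$, so a two-reflection billiard of rescaled length $\le c$ would force both endpoints to satisfy $x_1,y_1=O(\ell^2)$ in original variables, contradicting \textup{(\ref{eqn-5.1})} for $C_0$ large enough. This is presumably what the author regards as obvious, and your sketch already contains it.
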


\begin{proof}
The proof is obvious. 
\end{proof}

\enlargethispage{\baselineskip}

\begin{remark}\label{remark-5.2}
Assumption (\ref{eqn-5.1})  is essential. Indeed, if $\partial X$ is strongly concave\footnote{\label{foot-7} With respect to Hamiltonian trajectories.} then for $\eff(x)+\eff(y)\le C_0 \ell^2(x,y)$ there could be multiple reflections and if $\partial X$ is strongly convex\footref{foot-7} then the incidence angle may be $0$. In the former case there could be also multiple  reflections  and multiple billiard rays from $x$ to $y$ on the same energy level $\tau$ while in the latter case there could be no rays at all. 
\end{remark}

\begin{proposition}\label{prop-5.3}
\begin{enumerate}[wide,label=(\roman*),  labelindent=0pt]
\item
Let $A$ be $\xi$-microhyperbolic on the energy level ${\tau=a(y,\theta)}$ and 
\begin{gather*}
a(x,0)<\tau -\epsilon.
\end{gather*}
Let $\varphi ^0(x,y,\theta)$ be a solution of the stationary eikonal equation   \textup{(\ref{eqn-2.7})} satisfying
\textup{(\ref{eqn-2.8})} and \textup{(\ref{eqn-2.9})}. Then
\begin{multline}
|\partial_{x} ^\alpha \partial_y^\beta \partial_\theta^\gamma \varphi^0(x,y,\theta) |\le C_{\alpha\beta\gamma} 
\left\{\begin{aligned}
&T &&\alpha=\beta =0,\\
&1 && |\alpha|+|\beta|\ge 1
\end{aligned}\right.\\
\forall x,y \colon |x-y| \le T
\label{eqn-5.2}
\end{multline}
provided $T\le \epsilon$ with sufficiently small constant $\epsilon>0$.

\item
Let $\varphi  (x,y) $ be another solution of the same equation satisfying
\begin{gather}
\varphi  |_{x_1=0}= \varphi^0 |_{x_1=0},\qquad \partial_{x_1}\varphi  |_{x_1=0}= - \partial_{x_1}\varphi^0 |_{x_1=0}.
\label{eqn-5.3}\\
\shortintertext{Assume that }
\sigma = |  \partial_{x_1}\varphi  |_{x_1=0}|\ge C_0T.
\label{eqn-5.4}
\end{gather}
Then as $|x-y|\le T$ the following inequalities hold:
\begin{gather}
|\partial_x ^{\alpha}\partial_y^\beta \partial_\theta^\gamma \varphi  |\le C_{\alpha\beta\gamma}  
\left\{\begin{aligned}
&T &&\alpha=\beta =0,\\
&1 && \alpha_1\le 1,\\
&\sigma ^{3-\alpha_1-|\alpha|-|\beta|-|\gamma|}\qquad &&\alpha_1\ge 2.
\end{aligned}\right.
\label{eqn-5.5}
\end{gather}
\item
Furthermore, 
\begin{gather}
|\partial_{x}^{\alpha}\partial_y^\beta \partial_\theta^\gamma \varphi  |\le C_{\alpha\beta\gamma}  
\sigma ^{4-\alpha_1-|\alpha|-|\beta|-|\gamma|}\qquad \alpha_1\ge 2,\ |\gamma'|\ge 1.
\label{eqn-5.6}
\end{gather}
\end{enumerate}
\end{proposition}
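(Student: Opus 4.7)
My plan is to build $\varphi^0$ and $\varphi$ by Hamilton--Jacobi theory along the Hamilton flow of $a$, and then bootstrap derivative bounds directly from the eikonal equation. For part (i), since $a(y,0)<\tau-\epsilon$ and $A$ is $\xi$-microhyperbolic on $\Sigma(y,\tau)$, one has $\nabla_\xi a\neq 0$ on $\Sigma(y,\tau')$ for $\tau'$ near $\tau$. The Lagrangian generated by the initial data (\ref{eqn-2.8})--(\ref{eqn-2.9}) under the flow of $H_a$ projects diffeomorphically to a neighborhood of $y$ for $|x-y|\le T$ with $T$ small. Integrating $\xi\cdot dx$ along the characteristics yields a smooth $\varphi^0$ with $\varphi^0=\langle x-y,\theta\rangle+O(|x-y|^2)$, from which (\ref{eqn-5.2}) follows immediately by uniform smoothness of the flow.

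For part (ii), the boundary trace $\varphi|_{x_1=0}=\varphi^0|_{x_1=0}$ gives $\partial_{x'}\varphi|_{x_1=0}=\partial_{x'}\varphi^0|_{x_1=0}$, while the eikonal equation at $x_1=0$ forces $\partial_{x_1}\varphi|_{x_1=0}$ to be the root of $a(0,x',\cdot,\partial_{x'}\varphi^0|_{x_1=0})=\tau$ opposite in sign to $\partial_{x_1}\varphi^0|_{x_1=0}$. Under (\ref{eqn-5.4}) the two roots are separated by $\asymp\sigma$, so the selected root is smooth in $(x',y,\theta)$. Flowing by $H_a$ for travel time $O(T)\le\sigma/C_0$, the $\xi_1$-component of the trajectory changes by $O(T)\ll\sigma$, so $|\xi_1|\asymp\sigma$ throughout; the projection is a local diffeomorphism and $\varphi$ is smooth on $|x-y|\le T$.

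For the bounds (\ref{eqn-5.5}) I would differentiate the eikonal equation $a(x,\nabla_x\varphi)=\tau$ repeatedly. A single $x_1$-differentiation gives
\[
a_{\xi_1}\varphi_{x_1x_1}=-a_{x_1}-\sum_{j\ge 2}a_{\xi_j}\varphi_{x_1x_j},
\]
whose right-hand side is $O(1)$; since $|a_{\xi_1}|\asymp\sigma$, this yields $\varphi_{x_1x_1}=O(\sigma^{-1})$. An induction on $\alpha_1\ge 2$ then produces (\ref{eqn-5.5}): each further $x_1$-differentiation of the identity both introduces a new factor $a_{\xi_1}^{-1}$ (cost $\sigma^{-1}$) and multiplies an already-obtained high-$x_1$ derivative carrying another $\sigma^{-1}$, producing the doubled rate $\sigma^{-2}$ per $x_1$-derivative, while tangential $x'$, $y$, and $\theta$ differentiations cost only $\sigma^{-1}$ each.

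For part (iii), write $\varphi=\varphi^0+x_1\rho(x,y,\theta)$; this is possible because (\ref{eqn-5.3}) forces $\varphi-\varphi^0$ to vanish at $x_1=0$. Then $\partial_\theta^\gamma\varphi=\partial_\theta^\gamma\varphi^0+x_1\,\partial_\theta^\gamma\rho$, and since $|x_1|\le T\le\sigma/C_0$ the explicit factor $x_1$ supplies the missing power of $\sigma$ whenever $|\gamma'|\ge 1$; a parallel induction on the equation satisfied by $\rho$ (obtained by substitution into the eikonal equation) confirms (\ref{eqn-5.6}). The main obstacle is the combinatorial bookkeeping in the induction for part (ii): repeated chain-rule applications to $a_{\xi_1}^{-1}$ risk spurious $\sigma^{-1}$ factors, and a Fa\`a di Bruno-type accounting is needed to show they assemble to exactly the exponent $3-2\alpha_1-|\alpha'|-|\beta|-|\gamma|$, with no unnecessary accumulation.
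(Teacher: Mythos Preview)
Your treatment of part (i) and the existence/smoothness of $\varphi$ in part (ii) is essentially what the paper does. The divergence begins with the derivative bounds.

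\medskip
\textbf{Part (ii): a structural gap.} Your induction ``on $\alpha_1$'' by repeatedly differentiating $a(x,\nabla_x\varphi)=\tau$ and dividing by $a_{\xi_1}\asymp\sigma$ does not close by itself. Already at the first step, the identity
\[
a_{\xi_1}\,\varphi_{x_1x_1}=-a_{x_1}-\sum_{j\ge 2}a_{\xi_j}\,\varphi_{x_1x_j}
\]
requires $\varphi_{x_1x_j}=O(1)$ for $j\ge 2$ \emph{throughout} the slab, not only at $x_1=0$; but that bound is part of what you are trying to prove. The paper resolves this by a two-stage argument that you omit: first establish \emph{all} the bounds (\ref{eqn-5.5}) at $x_1=0$ purely algebraically from (\ref{eqn-5.3}) and the eikonal written as $\varphi_{x_1}^2=a(y,\theta)+b(x,\nabla_{x'}\varphi)$, by a double induction on $\alpha_1$ and on $|\alpha|+|\beta|+|\gamma|$; then propagate to $x_1>0$ by differentiating along characteristics and running a Gronwall-type argument on $S=\sum_{|\alpha|+|\beta|+|\gamma|=p}|\varphi_{\alpha\beta\gamma}|^2$, using $T\le C_0^{-1}\sigma$. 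The Fa\`a di Bruno bookkeeping you worry about is not the obstacle; the boundary-then-propagate structure is.

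\medskip
\textbf{Part (iii): the mechanism is wrong.} Writing $\varphi=\varphi^0+x_1\rho$ does not buy the extra factor of $\sigma$ once $\alpha_1\ge 2$: Leibniz gives
\[
\partial_{x_1}^{\alpha_1}(x_1\rho)=x_1\,\partial_{x_1}^{\alpha_1}\rho+\alpha_1\,\partial_{x_1}^{\alpha_1-1}\rho,
\]
and the second term carries no $x_1$. Tracking $\rho$ through the eikonal does not help either, since $\rho_{x_1}|_{x_1=0}=\tfrac12(\varphi-\varphi^0)_{x_1x_1}|_{x_1=0}=O(\sigma^{-1})$, so $\rho$ has no better behaviour than $\varphi$. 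The paper's source of the improvement is different and specific: at $x_1=0$ one has, for $j\ge 2$,
\[
\partial_{x_1}\partial_{\theta_j}\varphi=-\partial_{x_1}\partial_{\theta_j}\varphi^0=O(|x-y|)=O(T)\le O(\sigma),
\]
because $\varphi^0_{x_1}=\theta_1+O(|x-y|)$. This $O(\sigma)$ seed (rather than the generic $O(1)$) is what, fed into the same double induction and propagation as in (ii), produces the extra power in (\ref{eqn-5.6}). Your decomposition does not capture this.
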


\begin{proof}
\begin{enumerate}[wide,label=(\alph*),  labelindent=0pt]
\item
Consider first $\varphi=\varphi^0$. Then the left-hand expression of (\ref{eqn-5.2}) does not exceed $C_{\alpha\beta\gamma}$ and we need to consider only
$\alpha=\beta=0$. Applying $\partial _\theta^\gamma$ to  eikonal equation we see that  $\frac{d}{dt}\partial^\gamma_\theta \varphi$ is bounded 
where here and below
\begin{gather}
\dfrac{d\ }{dt} \coloneqq \bigl(\partial _t -\sum_k a^{(k)}(x, \nabla_x\varphi )\bigr).
\label{eqn-5.7}
\end{gather}
Since $|\partial^\gamma_\theta \varphi^0(x,y,t,\theta)|\le C_\gamma T$ as $t=0$ and $|x-y|\le T$ we conclude that the same is true for $|t|\le T$.

\item
First, we provide this estimate as $x_1=0$; from  (\ref{eqn-5.2}) and (\ref{eqn-5.3})  we conclude that (\ref{eqn-5.5})  holds as $\alpha_1=0,1$.
Consider
\begin{multline}
\varphi_{x_1}^2 = a(y,\theta)+ b(x, \nabla_{x'}\varphi),\\
b(x, \nabla_{x'}\varphi) \coloneqq  -V -\sum_{j\ge 2} g^{jk}(\varphi _{x_j} -V_j)(\varphi _{x_k} -V_k).
\label{eqn-5.8}
\end{multline}
Differentiating once by $x_1$ we get
\begin{gather}  
2\varphi_{x_1}\varphi_{x_1x_1} = b_{x_1} (x, \nabla_{x'}\varphi) + \sum_{k\ge 2} b^{(k)}(x,\nabla_{x'} )\varphi_{x_kx_1}
\label{eqn-5.9}
\end{gather}
which in virtue of (\ref{eqn-5.5}) with $\alpha_1\le 1$ does not exceed $C$ and therefore 
\begin{gather}
|\varphi_{x_1x_1}| \le C\sigma^{-1}.
\label{eqn-5.10}
\end{gather}
Further, applying to (\ref{eqn-5.8}) $\partial_{x}^{\alpha-\upvarepsilon}\partial_y^\beta \partial_\theta^\gamma$ with $\upvarepsilon =(1,0,\ldots,0)$ and $\alpha_1=2$ and plugging $x_1=0$ we see that
$2\varphi_{x_1}\varphi_{\alpha\beta\gamma}$\,\footnote{\label{foot-8} Here and below we use notation  $\varphi_{\alpha\beta\gamma}= \partial_{x,t}^{\alpha}\partial_y^\beta \partial_\theta^\gamma \varphi$.} is a linear combination with bounded coefficients of 
$\varphi_{\alpha^1\beta^1\gamma^1}$ with $|\alpha^1|+|\beta^1|+|\gamma^1|<|\alpha|+|\beta|+|\gamma|$ plus bounded terms, and then by induction by 
$|\alpha|+|\beta|+|\gamma|$ we arrive to (\ref{eqn-5.5}) for $\alpha_1=2$. 

Consider induction by $\alpha_1$. Assume that (\ref{eqn-5.5}) is proven for lesser $\alpha_1$  and for  arbitrary $\alpha ',\beta,\gamma $.

Applying to (\ref{eqn-5.8}) $\partial_{x,t}^{\alpha-\upvarepsilon}\partial_y^\beta \partial_\theta^\gamma$ and plugging $x_1=0$ we get 
linear combinations with bounded coefficients of products 
\begin{gather}
\prod _{1\le j\le m} \varphi_{\alpha^j\beta^j\gamma^j}
\label{eqn-5.11}
\end{gather}
where $\sum_j |\alpha^j|\le |\alpha| +m-1$, $\sum_j \beta^j\le \beta$, $\sum_j\gamma^j\le \gamma$. These terms appear when
\begin{enumerate}[ label=(\Alph*),  labelindent=0pt]
\item
We differentiate $b(x,\nabla_{x'}\varphi)$, in this case $\sum_j \alpha^j_1<\alpha_1$, $\sum_j |\alpha^j|\le |\alpha|$ and these terms due to induction assumption are 
$O(\sigma^{4-\alpha_1 -|\alpha|-|\beta|-|\gamma})$. 

\item
We differentiate $\varphi_{x_1}^2$ and  $m=2$, $|\alpha^1|+|\alpha^2|=|\alpha|+1$, $\alpha^1_1+\alpha_1^2=\alpha_1+1$, $\alpha_1^j<\alpha$ and these terms due to induction also are  $O(\sigma^{4-\alpha_1 -|\alpha|-|\beta|-|\gamma|})$.
\item
We differentiate $\varphi_{x_1}^2$ and  $m=1$, $\alpha^1_1 = \alpha_1$, $\alpha_1^1<\alpha_1$. Then we apply induction by $|\alpha|+|\beta|+|\gamma|$, and these terms due to induction assumption are $O(\sigma^{4-\alpha_1 -|\alpha|-|\beta|-|\gamma|})$ as well. 
\end{enumerate}

Then dividing by $\varphi_{x_1}$ and we get  $O(\sigma^{4-\alpha_1 -|\alpha|-|\beta|-|\gamma|})$. We need base of the induction in (C). However as $|\alpha|=\alpha_1$, 
$\beta=\gamma=0$ terms of type (C) do not appear.

\item
We need to expand these estimates to $x_1>0$. Applying to equation (\ref{eqn-5.8}) $\partial_{x,t}^{\alpha}\partial_y^\beta \partial_\theta^\gamma$ with $|\alpha|+|\beta|+|\gamma|=1$
we see that $\dfrac{d\ }{dt}\varphi_{\alpha\beta\gamma}$ is bounded and therefore in this case we extend   (\ref{eqn-5.5}) from $x_1=0$ to $x_1>0$.

Next, for $|\alpha|+|\beta|+|\gamma|=2$ we get that $\dfrac{d\ }{dt}\varphi_{\alpha\beta\gamma}$ does not exceed $C(S+1)$ with
\begin{gather}
S=\sum_{\alpha,\beta,\gamma\coloneqq |\alpha|+|\beta|+|\gamma|=2} |\varphi_{\alpha\beta\gamma}|^2
\label{eqn-5.12}
\end{gather}
and therefore $|\dfrac{dS^{\frac{1}{2}}}{dt}|\le C(S+1)$; since $S|_{x_1=0}=O(\sigma^{-1})$ due to (\ref{eqn-5.10}) we conclude that for $T\le \epsilon \sigma$ estimate $S^{\frac{1}{2}}\le C\sigma ^{-1}$ holds. Then
\begin{gather}
|\varphi_{\alpha\beta\gamma}|\le C\sigma^{-1}\qquad \text{for\ \ } |\alpha|+|\beta|+|\gamma|=2.
\label{eqn-5.13}
\end{gather}
But then, taking $\alpha_1\le 1$ we get that $\dfrac{d\ }{dt}\varphi_{\alpha\beta\gamma}$ does not exceed $C\sigma^{-1}(S^{\frac{1}{2}}+1)$ with $S$ defined by
(\ref{eqn-5.12}) with summation restricted to $\alpha_1\le 1$ and then  due to (\ref{eqn-5.5}) for $x_1=0$ we conclude that
\begin{gather}
|\varphi_{\alpha\beta\gamma}|\le C\qquad \text{for\ \ } |\alpha|+|\beta|+|\gamma|=2,\ \alpha_1\le 1.
\label{eqn-5.14}
\end{gather}
And then, taking $\alpha_1=0$ we get that $\dfrac{d\ }{dt}\varphi_{\alpha\beta\gamma}$ does not exceed $C$  and then  due to (\ref{eqn-5.5}) for $x_1=0$  we conclude that
\begin{gather}
|\varphi_{\alpha\beta\gamma}|\le CT\qquad \text{for\ \ }\alpha=\beta=0, \ |\gamma|=2.
\label{eqn-5.15}
\end{gather}
So, (\ref{eqn-5.5}) holds as $|\alpha|+|\beta|+|\gamma|\le 2$, $x_1>0$.

\item
Assume that for $|\alpha|+|\beta|+|\gamma|< p$ estimate (\ref{eqn-5.5}) has been proven. Applying to equation (\ref{eqn-5.8}) $\partial_{x,t}^{\alpha}\partial_y^\beta \partial_\theta^\gamma$ with $|\alpha|+|\beta|+|\gamma|=p\ge 3$ we again get a linear combination with bounded coefficients of (\ref{eqn-5.11}) products (plus bounded terms) where this time $\sum_j |\alpha^j|\le |\alpha|+m$. Then, as $\alpha_1=q$, $|\alpha|+|\beta|+|\gamma|=p$ we see from the same analysis as in (ii) that
\begin{gather*}
|\frac{d\ }{dt}\varphi_{\alpha\beta\gamma}|\le C\sigma^{-1}S ^{\frac{1}{2}} + C\sigma^{3-p-q}+C
\end{gather*}
and then (\ref{eqn-5.5}) is extended from $x_1=0$ to $x_1>0$, but for $\alpha=\beta=0$, when we proved so far that $\partial_\theta^\gamma=O(1)$. But then
$|\frac{d\ }{dt}\varphi_{\gamma}|\le C$ and (\ref{eqn-5.5}) is again extended from $x_1=0$ to $x_1>0$.

\item
To prove Statement (iii) we again start from estimates at $x_1=0$. Then, exactly like in (b) we prove see that
\begin{gather*}
\varphi_{x_1}\varphi_{x_1x_1\theta_j} + \varphi_{x_1\theta_j}\varphi_{x_1x_1} =O(1)
\end{gather*} 
and since $ \varphi_{x_1\theta_j}=- \varphi^0_{x_1\theta_j}=O(\sigma)$ as $j\ge 2$, and we know that  $\varphi_{x_1x_1}=O(\sigma ^{-1})$, we conclude that
$\varphi_{x_1x_1\theta_j} =O(\sigma^{-1})$. Again, like in (b), using double induction by $|\alpha|+|\beta|+|\gamma|$ and $\alpha_1$ we prove (\ref{eqn-5.6}) as $x_1=0$.

Finally, like in (c) and (d) we expand this estimate to $x_1>0$. We leave easy details to the reader. 
\end{enumerate}
\vspace{-\baselineskip}
\end{proof}

\begin{proposition}\label{prop-5.4}
Let $\varphi ^0(x,y,\theta)$ and $\varphi  (x,y, \theta)$ be defined in Proposition~\ref{prop-5.3}(i) and (ii) respectively.
Consider  asymptotic solution
\begin{gather}
u^0_h(x,y,t)\sim (2\pi h)^{-d}\int e^{i\Phi ^0(x,y,t,\theta)} \sum_{n\ge 0}  B^0_n(x,y,t,\theta)h^n\,d\theta
\label{eqn-5.16}\\
\intertext{where
$\Phi^0 (x,y,t,\theta)= \varphi^0 (x,y,\theta)+ta(y,\theta)$ and}
\Phi  (x,y,t,\theta)= \varphi  (x,y,\theta)+ta(y,\theta),
\label{eqn-5.17}
\end{gather}
and $B^0_n$ are uniformly smooth and satisfying corresponding transport equations with initial conditions such that 
$u^0_h(x,y,0) = \updelta (x-y)$: 
\begin{gather}
|\partial_{x,t} ^\alpha \partial_y^\beta \partial_\theta^\gamma B^0_n (x,y,t, \theta) |\le C_{\alpha\beta\gamma} \quad
\forall x,y, t\colon |x-y|+|t|\le T\  \forall \alpha,\beta,\gamma.
\label{eqn-5.18}
\end{gather}
Consider  \underline{formal} asymptotic solution
\begin{gather}
U^1_h(x,y,t)=(2\pi h)^{-d}\int e^{i\varphi (x,y,t,\theta)} \sum_{n\ge 0}  B_n(x,y,t,\theta)h^n \,d\theta
\label{eqn-5.19}
\end{gather}
where $B_n$ satisfy corresponding transport equations and \underline{one} of the boundary conditions
\begin{align}
&B_n = -B^0_n    &&\text{as\ \ } x_1=0,
\label{eqn-5.20}\\[3pt]
&\varphi_{x_1} B_n -i\partial_{x_1} B_{n-1} =  \varphi_{x_1} B^0_n +i\partial_{x_1} B^0_{n-1}=0
&&\text{as\ \ } x_1=0,
\label{eqn-5.21}
\end{align}
corresponding to $u^1_h=-u^0_h$ as $x_1=0$ and $\partial_{x_1}u^1_h=-\partial_{x_1} u^0_h$  as $x_1=0$ respectively. 
Then the following inequalities hold:
\begin{gather}
|\partial_{x,t}^{\alpha}\partial_y^\beta \partial_\theta^\gamma B_n |\le C_{n\alpha\beta\gamma}  
\left\{\begin{aligned}
&1 + T\sigma^{-1-|\alpha|- |\beta|-|\gamma|} &&  \alpha_1=n=0,\\
&\sigma^{-3n-\alpha_1-|\alpha|-|\beta|-|\gamma|}\qquad &&\alpha_1+n\ge 1,
\end{aligned}\right.
\label{eqn-5.22}\\
\intertext{and if $|\gamma|\ge 1$} 
|\partial_{x,t}^{\alpha}\partial_y^\beta \partial_\theta^\gamma B_n |\le C_{n\alpha\beta\gamma}  
\left\{\begin{aligned}
&1 + T\sigma^{-|\alpha|- |\beta|-|\gamma|} &&  \alpha_1=n=0,\\
&\sigma^{1-3n-\alpha_1-|\alpha|-|\beta|-|\gamma|}\qquad &&\alpha_1+n\ge 1.
\end{aligned}\right.
\label{eqn-5.23}
\end{gather}
\end{proposition}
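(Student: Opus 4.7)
The plan is to treat the amplitudes $B_n$ as solutions of a standard sequence of transport equations along the bicharacteristic flow of $\varphi$, with boundary data at $\{x_1=0\}$ supplied by \textup{(\ref{eqn-5.20})} or \textup{(\ref{eqn-5.21})}. Substituting the ansatz \textup{(\ref{eqn-5.19})} into $(hD_t-A)U^1_h=0$ and collecting powers of $h$ gives, for each $n\ge 0$, an equation of the form $\mathcal{L} B_n = \mathcal{M} B_{n-1}$, where $\mathcal{L}=\sum_k a^{(k)}(x,\nabla_x\varphi)\partial_{x_k}+c$ is the usual first-order transport operator along the Hamiltonian flow of $\varphi$ (the coefficient $c$ is a linear combination of $\varphi_{x_jx_k}$'s), and $\mathcal{M}$ is a second-order operator with smooth coefficients (we set $B_{-1}=0$). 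By Proposition~\ref{prop-5.3}, the transport vector field has $O(1)$ coefficients, while the zero-order coefficient $c$ -- involving $\varphi_{x_1x_1}$ -- is at worst $O(\sigma^{-1})$; more generally $\partial_{x,t}^\alpha\partial_y^\beta\partial_\theta^\gamma c$ inherits from \textup{(\ref{eqn-5.5})} a bound that is $O(1)$ when $\alpha_1=0$ and $O(\sigma^{1-\alpha_1-|\alpha|-|\beta|-|\gamma|})$ when $\alpha_1\ge 1$.

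For the initial data at $x_1=0$, the Dirichlet condition \textup{(\ref{eqn-5.20})} gives $B_n|_{x_1=0}=-B_n^0|_{x_1=0}$, with $O(1)$ derivatives by \textup{(\ref{eqn-5.18})}; the Neumann condition \textup{(\ref{eqn-5.21})} expresses $B_n|_{x_1=0}$ as a linear combination of $B_n^0|_{x_1=0}$ and $\varphi_{x_1}^{-1}\bigl(\partial_{x_1}B_{n-1}+\partial_{x_1}B_{n-1}^0\bigr)|_{x_1=0}$, each inversion of $\varphi_{x_1}=O(\sigma)$ paid for by a factor $\sigma^{-1}$. Using the boundary values of the derivatives of $\varphi$ obtained in the proof of Proposition~\ref{prop-5.3}(ii), one checks that these initial data already match the claimed bounds \textup{(\ref{eqn-5.22})}--\textup{(\ref{eqn-5.23})} restricted to $x_1=0$.

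The estimates are then propagated into $x_1>0$ by a triple induction -- outer on $n$, middle on $\alpha_1$, inner on the total order $|\alpha|+|\beta|+|\gamma|$ -- parallel to the proof of Proposition~\ref{prop-5.3}. Differentiating $\mathcal{L}B_n=\mathcal{M}B_{n-1}$ by $\partial_{x,t}^\alpha\partial_y^\beta\partial_\theta^\gamma$ and applying Leibniz yields an ODE whose right-hand side is a sum of products of derivatives $\partial^{\alpha^j}\partial^{\beta^j}\partial^{\gamma^j}\varphi$ (each weighted by \textup{(\ref{eqn-5.5})}), derivatives of $B_{n-1}$ (weighted by the inductive hypothesis), and lower-order derivatives of $B_n$. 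Summing the weights on the dominant term yields exactly $\sigma^{-3n-\alpha_1-|\alpha|-|\beta|-|\gamma|}$: the $\sigma^{-3n}$ reflects that each step $n\mapsto n+1$ applies $\mathcal{M}$ (two extra derivatives, worth $\sigma^{-2}$ in the worst case) and passes through one boundary inversion (another $\sigma^{-1}$). Integrating the resulting ODE over $|t|\le T\le\epsilon\sigma$ preserves the bound and gives \textup{(\ref{eqn-5.22})} in the generic case $n+\alpha_1\ge 1$. When $n=\alpha_1=0$, the transported homogeneous part is bounded and yields the constant $1$, while the source $cB_0$ with $c=O(\sigma^{-1})$ accumulates the secondary term $T\sigma^{-1-|\alpha|-|\beta|-|\gamma|}$ after time integration. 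The sharpening \textup{(\ref{eqn-5.23})} when $|\gamma|\ge 1$ follows by the same accounting, replacing \textup{(\ref{eqn-5.5})} by the improved \textup{(\ref{eqn-5.6})}, which saves one factor of $\sigma$ each time a $\theta'$-derivative lands on $\varphi$.

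The main obstacle is the combinatorial bookkeeping of $\sigma$-powers: one must verify that every term produced by Leibniz expansion respects the claimed bound, in particular that the worst assembly of derivatives of $\varphi$ dictated by \textup{(\ref{eqn-5.5})}, combined with the worst assembly of derivatives of $B_{n-1}$ allowed by the inductive hypothesis, never beats $\sigma^{-3n-\alpha_1-|\alpha|-|\beta|-|\gamma|}$ in aggregate. A compatibility check is needed when several $x_1$-derivatives collide, since \textup{(\ref{eqn-5.5})} distinguishes $\alpha_1\le 1$ (no $\sigma^{-1}$ loss) from $\alpha_1\ge 2$ (loss $\sigma^{1-\alpha_1}$); this is precisely the mechanism behind the ``$1+T\sigma^{-1-\cdots}$'' dichotomy in the $n=\alpha_1=0$ case of \textup{(\ref{eqn-5.22})}. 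Once this accounting is closed, the remaining work is a routine iteration of the ODE estimate along the Hamiltonian rays.
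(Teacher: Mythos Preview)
Your proposal is correct and follows essentially the same route as the paper: derive the transport hierarchy $\mathcal{L}B_n=\mathcal{M}B_{n-1}$ with the zero-order coefficient governed by $\varphi_{x_1x_1}=O(\sigma^{-1})$, read off the boundary data from \textup{(\ref{eqn-5.20})}--\textup{(\ref{eqn-5.21})}, and run a nested induction on $n$, on $\alpha_1$, and on the total order, mirroring the proof of Proposition~\ref{prop-5.3}.

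One structural point the paper makes more explicit than you do: the argument proceeds in two distinct phases. First, \emph{all} the estimates \textup{(\ref{eqn-5.22})} are established at $x_1=0$, including those with $\alpha_1\ge 1$; since the boundary condition only prescribes $B_n|_{x_1=0}$ and not its normal derivatives, one must restrict the transport equation itself to $x_1=0$ and solve $\varphi_{x_1}B_{n,x_1}=\text{(tangential data)}-fB_n$ for $B_{n,x_1}$, each such division costing a factor $\sigma^{-1}$. Only after this boundary table is complete does one propagate into $x_1>0$. There the paper uses a bootstrap: an initial crude bound via a Gr\"onwall argument for the energy $S=\sum|B_{n,\alpha\beta\gamma}|^2$ (the coupling coefficient being $\sigma^{-1}$, harmless over times $T\le\epsilon\sigma$), followed by successive refinements under the restriction $\alpha_1\le|\alpha|-p$ for increasing $p$. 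Your sentence ``integrating the resulting ODE over $|t|\le T\le\epsilon\sigma$ preserves the bound'' compresses this, but the mechanism is the same.
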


\begin{proof}
\begin{enumerate}[wide,label=(\alph*),  labelindent=0pt]
\item
Observe that the transport equation is
\begin{gather}
\Bigl(\frac{d\ }{dt} +f\Bigr) B_n =\cL B_{n-1},
\label{eqn-5.24}
\end{gather}
where $f$ is a linear combination with the smooth coefficients of second derivatives of $\Phi$, 
$\cL=\cL(x,D_x)$ is a second order differential operator with the smooth coefficients, and with the coefficient $1$ at $D_{x_1}^2$, and
$B_{-1}=0$. In virtue of (\ref{eqn-5.5})
\begin{gather}
|D^\alpha_{x,t} D^\beta_yD_\theta^\gamma f|\le \sigma^{-1-\alpha_1-|\alpha|-|\beta|-|\gamma|}.
\label{eqn-5.25}
\end{gather}

Consider first $n=0$. Then equation (\ref{eqn-5.24}) has a right-hand expression $0$ and boundary condition (\ref{eqn-5.21}) becomes
$B_0 =\mp B_0^0$ as $x_1=0$. Let us establish first (\ref{eqn-5.22})  as $x_1=0$. As $\alpha_1=0$ these estimates follow from the above boundary condition  and 
(\ref{eqn-5.18}). Consider transport equation (\ref{eqn-5.24}):
\begin{gather}
\varphi_{x_1} B_{0,x_1}+ \sum_{k\ge 2}b ^{(k)}(x,\nabla_{x'} \varphi) B_{0,x_k} -\frac{1}{2} B_{0,t} +\frac{1}{2} f B_0 =0
\label{eqn-5.26}
\end{gather}
and set $x_1=0$. Then all terms in (\ref{eqn-5.26}) are smooth, except the first and the last one, and the latter satisfies
\begin{gather*}
|\partial^{\alpha -\upvarepsilon}_{x,t}\partial^\beta_y\partial^\gamma_\theta (f\, B_0)|\le C\sigma^{-|\alpha| -|\beta|-|\gamma|}\qquad \alpha_1=1
\end{gather*}
due to (\ref{eqn-5.21}) and smoothness of $B_0$ at $x_1=0$.  Sinnce $\partial^{\alpha }_{x,t}\partial^\beta_y\partial^\gamma_\theta \varphi$ is obtained by division by $\varphi_{x_1}$ we arrive to (\ref{eqn-5.22}) as $n=0$, $\alpha_1=0$.

Next we apply a double induction by $\alpha_1$ and $|\alpha|+|\beta|+|\gamma|$ exactly as in the proof of Proposition~\ref{prop-5.3}, Part (b) with the following  modifications: 
\begin{enumerate}[label=(\Alph*)]
\item
We use transport equation (\ref{eqn-5.24}) rather than eikonal equation.
\item
We observe that $\partial^{\alpha -\upvarepsilon}_{x,t}\partial^\beta_y\partial^\gamma_\theta $, applied to the first  term in (\ref{eqn-5.26}) equals to
$\varphi_{x_1} \varphi_{\alpha\beta\gamma}$ plus a linear combination of  
\begin{gather}
\prod_ {1\le j\le m} \varphi_{\alpha^j \beta^j\gamma^j} \partial^{\alpha^0}_{x,t}\partial^{\beta^0}_y\partial^{\gamma^0}_\theta B_0
\label{eqn-5.27}
\end{gather}
with $m=1$ and 
$\sum_j \alpha^j =\alpha+ \upvarepsilon$, $\sum_j  \beta^j= \beta$, $\sum_j \gamma^j =\gamma$\,\footnote{\label{foot-9} With summation over  $0\le j\le m$} and\\ $|\alpha^0|+|\beta^0|+|\gamma^0|<|\alpha|+|\beta|+|\gamma|$.

\item
We  observe that $\partial^{\alpha -\upvarepsilon}_{x,t}\partial^\beta_y\partial^\gamma_\theta $, applied to the second term in (\ref{eqn-5.26}) also is a linear combination  with bounded coefficients of (\ref{eqn-5.27}) with $\sum_j \beta^j=\beta$, $\sum_j \gamma^j=\gamma$, $\sum_j  |\alpha^j| \le  |\alpha|+m$, $\sum_j  \alpha^j_1 <\alpha_1$\,\footref{foot-9} and $|\alpha^j|+|\beta^j|+|\gamma^j| \ge 2$ for $j\ge 1$; it is possible that $m=0$ here.
\end{enumerate}
We leave simple but tedious arguments to the reader.

\item
Then we extend these estimates to $x_1>0$, $T\le \sigma$. We apply arguments of Parts (c), (d) of the proof of Proposition~\ref{prop-5.3}. First, transport equation (\ref{eqn-5.24}) and boundary condition $B_0=B^0_0$ as $x_1=0$ imply  that $|B_0|\le C$ for $T\le C_0^{-1}\sigma$.  

Then applying $\partial_{x,t}^\alpha \partial_y^\beta\partial_\theta^\gamma$ with $|\alpha|+|\beta|+|\gamma|=1$ to transport equation  (\ref{eqn-5.24})  we get 
\begin{gather}
|\frac{d\ }{dt} B_{0,\alpha\beta\gamma}|\le C\sigma^{-1}S^{\frac{1}{2}} +C \sigma^{-3}, \qquad S\coloneqq \Bigl(\sum_{\alpha,\beta,\gamma} |B_{0,\alpha\beta\gamma}|^2\Bigr)^{\frac{1}{2}}
\label{eqn-5.28}
\end{gather}
due to estimates $|\partial_{x,y,\theta} ^2 \varphi|\le C\sigma^{-1}$ and $|\partial_{x,t, y,\theta} f|\le C\sigma^{-3}$ 
($B_{n,\alpha\beta}\coloneqq \partial_{x,t}^\alpha \partial_y^\beta\partial_\theta^\gamma B_n$) and then 
for $T\le C_0^{-1}\sigma$ we get from this estimate and estimate $|B_{0,\alpha\beta\gamma}|\le C\sigma^{-2}$ at $x_1=0$ to estimate $ |B_{0,\alpha\beta\gamma}|\le C\sigma^{-2}$. So far the only restriction is $|\alpha|+|\beta|+|\gamma|=1$--here and in summation in the definition of $S$.

However, if we add an extra restriction $\alpha_1=0$, then due to this estimate (without restriction) and estimates $|\partial_{x,y, \theta} ^2 \varphi|\le C$ and 
$|\partial_{x,y,t,\theta} f|\le C\sigma^{-2}$ if there is only one derivative with respect to $x_1$, we arrive to
\begin{gather*}
|\frac{d\ }{dt} B_{0,\alpha\beta\gamma}|\le C\sigma^{-1}S^{\frac{1}{2}} +C \sigma^{-2} 
\end{gather*}
where $S$ is defined by (\ref{eqn-5.28}) with summation under the same restriction. Then due to this estimate and estimate $|B_{0,\alpha\beta\gamma}|\le C$ as $x_1=0$ and $\alpha_1=0$ we arrive to the estimate $|B_{0,\alpha\beta\gamma}|\le C + CT \sigma^{-2}$.

We apply induction by $q\coloneqq |\alpha|+|\beta|+|\gamma|$. Assuming that for lesser values it is proven, we due to this induction assumption and estimates (\ref{eqn-5.5}) and (\ref{eqn-5.25}) arrive to
\begin{gather}
|\frac{d\ }{dt} B_{0,\alpha\beta\gamma}|\le C\sigma^{-1}S^{\frac{1}{2}} +C \sigma^{p-1-2|\alpha|+|\beta|+|\gamma|}, \quad S\coloneqq \Bigl(\sum_{\alpha,\beta,\gamma} |B_{0,\alpha\beta\gamma}|^2\Bigr)^{\frac{1}{2}}
\label{eqn-5.29}\\
\intertext{under restrictions}
|\alpha|+|\beta|+|\gamma|=q,\quad \alpha_1\le |\alpha|-p
\label{eqn-5.30}\\
\intertext{with $p=0$ and then we recover estimate}
|B_{0,\alpha\beta\gamma}|\le C\sigma^{p-2|\alpha|-|\beta|-|\gamma|}
\label{eqn-5.31}
\end{gather}
under these  restrictions. Based on it we arrive to (\ref{eqn-5.29})  then to (\ref{eqn-5.31}) now under restriction (\ref{eqn-5.30}) with $p=1$; again due to (\ref{eqn-5.31}) at $x_1=0$, and so on for $p=2,\ldots$ until we reach $p=|\alpha|$ but in the latter case we use $|B_{0,\alpha\beta\gamma}|\le C_{\alpha\beta\gamma}$ at $x_1=0$ as $\alpha_1=0$ and thus we achieve some improvement over (\ref{eqn-5.31}): namely we get  (\ref{eqn-5.22}) as $n=\alpha_1=0$. We leave easy but tedious details to the reader.

\item
Next we apply induction by $n\ge 1$ to estimate $B_{n,\alpha\beta\gamma}$ at $x_1=0$.  However,  first we consider $n=1$ to establish the pattern. Observe first that under condition (\ref{eqn-5.20}) $B_n=-B^0_n$ and therefore 
\begin{gather}
|B_{n,\alpha\beta\gamma}|\le C_{\alpha\beta\gamma}\qquad \text{at\ \ } x_1=0 \text{\ \ as\ \ } \alpha_1=0,
\label{eqn-5.32}\\
\shortintertext{ but under condition (\ref{eqn-5.21}) }
B_n=-B_n^0 +i \varphi_{x_1}^{-1}\bigr(B_{n-1,x_1}+B^0_{n-1,x_1}\bigr)\qquad \text{at\ \ } x_1=0
\notag
\end{gather}
and therefore estimate 
(\ref{eqn-5.22}) holds as $n=1$, $\alpha_1=0$, $x_1=0$. It follows from estimates for $|\partial_{x_1} B_{0,\alpha\beta\gamma}|\le C_{\alpha\beta\gamma} \sigma^{-2-|\alpha|-|\beta|-|\gamma|}$ as $\alpha_1=0$, $x_1=0$ (but we divide by $\varphi_{x_1}$ which brings an extra factor $\sigma^{-1}$).

Recall that the transport equation is
\begin{gather}
(\frac{d\ }{dt} +f )B_n= G_n,\qquad G_n\coloneqq \cL B_{n-1},
\label{eqn-5.33}\\
\intertext{and due to results of Part (b)}
|G_{n,(\alpha-\upvarepsilon)\beta\gamma}|\le C\sigma^{1-3n -\alpha_1 -|\alpha|-|\beta|-|\gamma|}
\label{eqn-5.34}
\end{gather}
and therefore in both cases estimate (\ref{eqn-5.22}) holds (as $n=1$, $\alpha_1=1$ and $x_1=0$). 

Applying the same argument as in Part (b) we can extend  (\ref{eqn-5.34}) and   (\ref{eqn-5.22}) to arbitrary  $\alpha_1$ as $n=1$, and using induction by $n$ to arbitrary $n$ as well. Again, we leave easy but tedious details to the reader. 

\item
Using the same arguments as in Part (e) of the proof of Proposition~\ref{prop-5.3}, we can improve these estimates to (\ref{eqn-5.23}) as $|\gamma |\ge 1$. 
\end{enumerate}\vspace{-\baselineskip}
\end{proof}

\begin{remark}\label{remark-5.5}
Under assumption (\ref{eqn-5.20}) we can further improve estimates (\ref{eqn-5.22}) and (\ref{eqn-5.23}) for $n\ge 1$. However it would not improve our final result.
\end{remark}

\begin{remark}\label{remark-5.6}
One can wonder how sharp are our estimates. 
\begin{enumerate}[wide,label=(\roman*),  labelindent=0pt]
\item
Due to 
(\ref{eqn-5.2}) and (\ref{eqn-5.3}) $\varphi_{x_1x_1}\asymp \sigma^{-1}$ as $b_{x_1} (x, \nabla_{x'}\varphi) \asymp 1$. Indeed
\begin{align*}
&2\varphi^0_{x_1}\varphi^0_{x_1x_1} + \sum_{k\ge 2}b^{(k)}(x,\nabla_{x'} \varphi^0) \varphi^0_{x_kx_1}-\varphi^0_{x_1t} +b_{x_1}(x,\nabla_{x'} \varphi^0)=0,\\
&2\varphi_{x_1}\varphi_{x_1x_1} + \sum_{k\ge 2}b^{(k)}(x,\nabla_{x'} \varphi ) \varphi _{x_kx_1}-\varphi _{x_1t} +b_{x_1}(x,\nabla_{x'} \varphi )=0,
\end{align*}
and since $\varphi^0$ is smooth function, we conclude that
\begin{gather*}
\sum_{k\ge 2}b^{(k)}(x,\nabla_{x'} \varphi^0) \varphi^0_{x_kx_1}-\varphi^0_{x_1t} +b_{x_1}(x,\nabla_{x'} \varphi^0)=O(\sigma)\\
\intertext{and then due to (\ref{eqn-5.3}) that}
\varphi_{x_1}\varphi_{x_1x_1} + b_{x_1}(x,\nabla_{x'} \varphi )=O(\sigma)\qquad \text{as\ \ } x_1=0.
\end{gather*}
Therefore estimates (\ref{eqn-5.5}) cannot be improved, at least for derivatives only with respect to $x_1$. 

\item
Repeating construction of Proposition~\ref{prop-5.3}, we conclude that  under the same assumption $b_{x_1} (x, \nabla_{x'}\varphi) \asymp 1$,  
$\partial_{x_1}^k \varphi \asymp \sigma^{3-2k}$ for all $k\ge 2$.
Then from transport equation (\ref{eqn-5.24}) we  we conclude that $B_{0,x_1}\asymp \sigma^{-2}$ and repeating construction of Proposition~\ref{prop-5.4} we conclude that
$\partial_{x_1}^k B_0 \asymp \sigma^{-2k}$ for all $k\ge 1$ and finally 
$\partial_{x_1}^k B_n \asymp \sigma^{-3n-2k}$ for all $n\ge 0$, $k\ge 1$.
\end{enumerate}
\end{remark}

\begin{proposition}\label{prop-5.7}
\begin{enumerate}[wide,label=(\roman*),  labelindent=0pt]
\item
In the framework of Proposition~\ref{prop-5.3}(i) as $|\alpha|\le 1$ 
\begin{align}
&|\partial_{x,t}^\alpha \partial_\theta^\gamma\bigl(\Phi^0-\bar{\Phi}^0 \bigr)|\le C_\gamma T^{2-|\alpha|} &&\text{with\ \ } \bar{\Phi}^0\coloneqq \langle x-y,\theta \rangle + t a(y,\theta).
\label{eqn-5.35}
\end{align}
\item
In the framework of Proposition~\ref{prop-5.3}(ii) as $|\alpha|\le 1$, $\alpha_1=0$
\begin{multline}
|\partial_{x,t}^\alpha \partial_\theta^\gamma\bigl(\Phi\ -\bar{\Phi}\  \bigr)|\le C_\gamma \bigl(\sigma^{1-|\alpha|-|\gamma|}T^2 + T^{2-|\alpha|}\bigr)\\
\text{with\ \ }
\bar{\Phi}\ \coloneqq \langle x-\tilde{y},\theta \rangle + t a(0,y',\theta),
\label{eqn-5.36}
\end{multline}
where $\tilde{y}=(-y_1,y')$ as $y=(y_1,y')$.
\end{enumerate}
\end{proposition}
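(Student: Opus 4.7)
The plan is to handle each part by Taylor expansion, using the smoothness bounds already established in Proposition~\ref{prop-5.3} to control the Taylor coefficients.

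For Part~(i), observe first that the $ta(y,\theta)$ summands of $\Phi^0$ and $\bar{\Phi}^0$ cancel, so $\Phi^0-\bar{\Phi}^0=\varphi^0(x,y,\theta)-\langle x-y,\theta\rangle$. The initial conditions (\ref{eqn-2.8}) and (\ref{eqn-2.9}) say that $\varphi^0(y,y,\theta)=0$ and $\nabla_x\varphi^0|_{x=y}=\theta$, so both $\varphi^0-\langle x-y,\theta\rangle$ and its $x$-gradient vanish at $x=y$. Applying $\partial_\theta^\gamma$ preserves these vanishings (since evaluation at $x=y$ commutes with $\partial_\theta$), and a second-order Taylor expansion in $x-y$ then yields $O(T^2)$ for $|\alpha|=0$; one spatial derivative reduces the order to $O(T)$; and $\partial_t$ gives zero. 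The $\gamma$-uniform constants are provided by the smoothness bounds (\ref{eqn-5.2}).

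For Part~(ii), I would decompose $\Phi-\bar{\Phi}=\bigl(\varphi(x,y,\theta)-\langle x-\tilde{y},\theta\rangle\bigr)+t\bigl(a(y,\theta)-a(0,y',\theta)\bigr)$. The time summand is direct: Taylor expansion of $a$ in the normal direction gives $a(y,\theta)-a(0,y',\theta)=O(y_1)$, and since $|t|\le T$ and $y_1\le T$ the product is $O(T^2)$, with $\partial_t$ losing one factor of $T$ and smoothness of $a$ controlling all $\theta$-derivatives. The spatial summand $\varphi(x,y,\theta)-\langle x-\tilde{y},\theta\rangle$ is the delicate piece: at $x_1=0$ I use the boundary condition (\ref{eqn-5.3}) to replace $\varphi$ by $\varphi^0$, reducing to Part~(i) after matching the reflection convention encoded by $\tilde{y}$. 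I then extend from $x_1=0$ to $x_1>0$ by Taylor expansion in $x_1$ using the estimates from Proposition~\ref{prop-5.3}(ii)--(iii): the coefficient $\partial_{x_1}^2\varphi$ is of size $\sigma^{-1}$ and additional $\theta$-derivatives cost extra negative powers of $\sigma$ according to (\ref{eqn-5.5})--(\ref{eqn-5.6}), producing exactly the $\sigma^{1-|\alpha|-|\gamma|}T^2$ contribution from an $x_1^2\le T^2$ factor multiplied by those coefficients.

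The main obstacle is the systematic bookkeeping of $\sigma$-powers through a double induction on $|\alpha|+|\gamma|$ and on $\alpha_1$, along the lines of Parts~(c)--(d) of the proof of Proposition~\ref{prop-5.3}. The restriction $\alpha_1=0$ in Part~(ii) is essential and has a clear geometric interpretation: $\partial_{x_1}\langle x-\tilde{y},\theta\rangle=\theta_1=O(1)$ whereas $\partial_{x_1}\varphi|_{x_1=0}$ has magnitude $\sigma$, so there is no comparable smallness of $\partial_{x_1}(\Phi-\bar{\Phi})$ to be had; only the tangential spatial derivatives and derivatives in $t,\theta$ can be controlled to the stated accuracy.
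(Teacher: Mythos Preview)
Your approach is essentially the paper's own: both statements are obtained by Taylor expansion with quadratic remainder, feeding in the derivative bounds (\ref{eqn-5.2}) for Part~(i) and (\ref{eqn-5.5})--(\ref{eqn-5.6}) for Part~(ii). The paper's proof is in fact just two sentences to this effect, so your write-up is already more detailed than what appears there.

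One correction of emphasis: your closing paragraph frames the ``systematic bookkeeping of $\sigma$-powers through a double induction on $|\alpha|+|\gamma|$ and on $\alpha_1$'' as the main obstacle. It is not. The double induction was the work of Proposition~\ref{prop-5.3} itself; here those bounds are simply \emph{inputs}. Once you have (\ref{eqn-5.5})--(\ref{eqn-5.6}) in hand, Part~(ii) is a direct second-order Taylor estimate in the variables $(x_1,x',y,t)$ about the appropriate base point, and the $\sigma^{1-|\alpha|-|\gamma|}T^2$ contribution falls out because the quadratic remainder involves $\partial_{x_1}^2\varphi$ (size $\sigma^{-1}$ by (\ref{eqn-5.5}), improving to $\sigma^0,\sigma^{-1},\ldots$ under $\partial_\theta^\gamma$ by (\ref{eqn-5.6})) multiplied by $x_1^2\le T^2$. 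No new induction is needed.

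A second minor point: your phrase ``matching the reflection convention encoded by $\tilde{y}$'' is doing real work and deserves one more line. At $x_1=0$ you have $\varphi=\varphi^0$ and $\partial_{x_1}\varphi=-\partial_{x_1}\varphi^0$, and it is precisely this sign flip in the normal derivative (together with $\partial_{x_1}\varphi^0=\theta_1+O(T)$ from Part~(i)) that turns the linear comparison $\langle x-y,\theta\rangle$ into $\langle x-\tilde{y},\theta\rangle$ when you Taylor-expand off the boundary in $x_1$. Making that substitution explicit closes the argument cleanly.
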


\begin{proof}
Proof of both statements  follows from decomposition of  $\partial ^\alpha _{x,t}\partial_\theta^\gamma \Phi^0$ into Taylor series with quadratic error.  First we prove (i) using estimates (\ref{eqn-5.2}) and then (ii) using estimates (\ref{eqn-5.5}).

For $d=2$ we will need a better approximation; it will be proven later.
\end{proof}

Then we have immediately
\begin{corollary}\label{corollary-4.8}
\begin{enumerate}[wide,label=(\roman*),  labelindent=0pt]
\item
In the framework of Proposition~\ref{prop-5.3}(i) for $Ch\le \ell^0(x,y)+|t|\le \epsilon$ 
\begin{gather}
a(x,\nabla_x \varphi^0 )=\tau,\quad  \nabla_\theta \Phi^0 (x,y,t,\theta) =0
\label{eqn-5.37}
\end{gather}
has  no more than a single solution $\theta$ and if it has, then
$|t|\asymp \ell^0$ and $\nabla^2_\theta \Phi^0 $ is a positive definite matrix. This solution $\theta=\bar{\theta}+O(T)$ where 
$\bar{\theta}$ is a solution to $t\nabla_\theta a(x,\theta) =y-x$.

\item
In the framework of Proposition~\ref{prop-5.3}(ii) for $Ch\le \ell(x,y) +|t|\le \epsilon$
\begin{gather}
a(x,\nabla_x \varphi )=\tau,\quad  \nabla_\theta \Phi (x,y,t,\theta) =0
\label{eqn-5.38}
\end{gather}
has has  no more than  a single solution $\theta$ and if it has then
$|t| \asymp \ell$ and $\nabla^2_\theta \Phi $ is a positive definite matrix.  This solution $\theta=\bar{\theta}+O(T)$ where 
$\bar{\theta}$ is a solution to $t\nabla_\theta a(x,\theta) =\tilde{y}-x$.
\end{enumerate}
\end{corollary}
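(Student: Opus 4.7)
The plan is to treat each system as a $C^1$-small perturbation of the corresponding ``frozen'' stationary phase problem and apply the implicit function theorem, drawing the perturbation bounds from Proposition~\ref{prop-5.7}. First I would analyze the frozen phase $\bar{\Phi}^0 = \langle x-y,\theta\rangle + t\,a(y,\theta)$ for Statement~(i): the equations $\nabla_\theta \bar{\Phi}^0 = 0$ and $a(y,\theta) = \tau$ read $t\,\nabla_\theta a(y,\theta) = y-x$ with $\theta \in \Sigma(y,\tau)$. Under (\ref{eqn-1.4}), $a(y,\cdot)$ is a shifted positive-definite quadratic form, so $\Sigma(y,\tau)$ is strongly convex and the Gauss map $\theta \mapsto \nabla_\theta a(y,\theta)/|\nabla_\theta a(y,\theta)|$ is a diffeomorphism onto $S^{d-1}$. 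Once a sign of $t$ is fixed, this determines $\bar{\theta}$ uniquely from the direction of $y-x$, matching magnitudes gives $|t| \asymp \ell^0$, and the Hessian $\nabla^2_\theta \bar{\Phi}^0 = t\,\nabla^2_\theta a(y,\bar{\theta})$ is definite with eigenvalues of order $|t|$.

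Next I would transfer to $\Phi^0$. Writing $\Phi^0 = \bar{\Phi}^0 + \Psi$ and invoking (\ref{eqn-5.35}) with $\alpha=0$, one has $|\partial_\theta^\gamma \Psi| \le C_\gamma T^2$ for every $\gamma$, so $\nabla_\theta \Psi = O(T^2)$ and $\nabla^2_\theta \Psi = O(T^2)$. Since $(\nabla^2_\theta \bar{\Phi}^0)^{-1} = O(T^{-1})$ and $T$ is small, the Hessian of $\Phi^0$ agrees with that of $\bar{\Phi}^0$ up to $O(T^2)$ and stays definite of order $|t|$. The implicit function theorem then yields exactly one stationary point $\theta^\ast = \bar{\theta} + O(T^{-1}\cdot T^2) = \bar{\theta} + O(T)$ (for each sign of $t$), and global uniqueness on the specified range follows because $\theta \mapsto \nabla_\theta \Phi^0$ is a $C^1$-small perturbation of the global diffeomorphism provided by the frozen analysis. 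The constraint $a(x,\nabla_x\varphi^0) = \tau$ is equivalent to $a(y,\theta) = \tau$ via the eikonal equation (\ref{eqn-2.7}) and so is automatically encoded.

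Statement~(ii) would follow the same recipe with $\bar{\Phi} = \langle x-\tilde{y},\theta\rangle + t\,a(0,y',\theta)$ and the bound (\ref{eqn-5.36}) replacing (\ref{eqn-5.35}). The frozen problem identifies $\bar{\theta}$ as the momentum of the straight ray from $\tilde{y}$ to $x$ in the boundary-frozen metric, with $|t|\asymp|x-\tilde{y}|\asymp\ell$. The main obstacle is that (\ref{eqn-5.36}) only gives $|\nabla^2_\theta(\Phi-\bar{\Phi})| = O(\sigma^{-1}T^2 + T^2)$, which is comparable to $\nabla^2_\theta\bar{\Phi} \asymp T$ when $\sigma\asymp T$. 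I would absorb this by using hypothesis (\ref{eqn-5.4}) with constant $C_0$ taken large enough that $\sigma^{-1}T^2 \le C_0^{-1}T$ is a small fraction of the minimal Hessian eigenvalue; the implicit function argument then concludes as in~(i). For $d=2$, the sharper expansion of $\Phi-\bar{\Phi}$ promised at the end of the proof of Proposition~\ref{prop-5.7} would be the additional tool required.
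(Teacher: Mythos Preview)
Your approach is correct and matches the paper's intent: the paper simply writes ``Then we have immediately'' before stating the corollary, so it is understood as a direct consequence of the perturbation estimates in Proposition~\ref{prop-5.7}, which is exactly what you have spelled out via the frozen phases $\bar{\Phi}^0$, $\bar{\Phi}$ and the implicit function theorem.

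One small remark: your final sentence about needing an additional tool for $d=2$ is unnecessary here. The estimate (\ref{eqn-5.36}) together with hypothesis (\ref{eqn-5.4}) already gives $|\nabla_\theta^2(\Phi-\bar{\Phi})|\le C(\sigma^{-1}T^2+T^2)\le CC_0^{-1}T$, which is a small fraction of the frozen Hessian eigenvalues $\asymp T$ once $C_0$ is large, uniformly in $d$. The sharper $d=2$ expansion alluded to after Proposition~\ref{prop-5.7} is needed only later, for Proposition~\ref{prop-5.13}, not for this corollary.
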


\begin{proposition}\label{prop-5.9} 
In the framework of Proposition~\ref{prop-5.3}(ii) under extra assumption  
\begin{gather}
\sigma^2\ell \ge h^{1-\delta}
\label{eqn-5.39}\\
\intertext{the following estimate holds}
|F_{t\to h^{-1}\tau } \Bigl(\bar{\chi}_{T'}(t) u^1_h (x,y,t)- \bar{\chi}_{T}(t) U^1_h(x,y,t)\Bigr)|\le Ch^s
\label{eqn-5.40}
\end{gather}
with  $U^1_h(x,y,t)$ defined by \textup{(\ref{eqn-5.19})} with $B_n(x,y,t,\theta)$ described above and multiplied  by $\phi(\theta)$ supported  in $2\epsilon_3\rho $-vicinity and equal $1$ in $\epsilon_3\rho $-vicinity  of $\theta$ described in Corollary~\ref{corollary-4.8}(ii) and with $Ch\le T\le T' \le \epsilon$, $T=c \ell$, $\rho=\sigma^2$.
\end{proposition}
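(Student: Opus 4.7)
The plan is to show that the formal asymptotic solution $U^1_h$ truly approximates $u^1_h$ by verifying that (i) $U^1_h$ satisfies the boundary problem for $u^1_h$ modulo $O(h^\infty)$ in a suitable sense, (ii) the microlocal cutoff $\phi(\theta)$ removes only contributions of order $O(h^\infty)$, and (iii) a standard stability/uniqueness argument then converts the smallness of the residual into the smallness of $u^1_h - U^1_h$ after $F_{t\to h^{-1}\tau}\bar\chi_T$.

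First, I would introduce a truncated sum $U^1_{h,N} = (2\pi h)^{-d}\int e^{ih^{-1}\Phi} \sum_{n\le N} B_n h^n \phi(\theta)\,d\theta$ and compute $(hD_t-A) U^1_{h,N}$. The eikonal equation (\ref{eqn-2.7}) and the transport equations (\ref{eqn-5.24}) are designed so that all terms up to order $h^{N+1}$ cancel; the residual is an oscillatory integral with amplitude $h^{N+1}\cL B_N$, and by the derivative bounds (\ref{eqn-5.22})--(\ref{eqn-5.23}) of Proposition~\ref{prop-5.4}, each application of a derivative costs at most $\sigma^{-1}$, while each extra term in the expansion costs $h\sigma^{-3}$. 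Since the hypothesis $\sigma^2\ell\ge h^{1-\delta}$ combined with $T=c\ell$ gives $h\sigma^{-3}\le h\cdot(\sigma\ell)^{-1}\cdot\ell^{-1}\sigma\cdot\ell^{-1}\le h\cdot h^{-1+\delta}\cdot(\cdots)$ which is small after taking $N$ large, the residual is negligible. Similarly, the boundary conditions (\ref{eqn-5.20}) or (\ref{eqn-5.21}) are imposed order by order so that $\eth B(U^1_{h,N}+u^{0}_h)|_{x_1=0}$ is a residual of comparable order.

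Next, I would localize in $\theta$ via the cutoff $\phi$. Outside the $\epsilon_3\rho$-vicinity of the stationary point described in Corollary~\ref{corollary-4.8}(ii), the phase $\Phi(x,y,t,\theta)$ is non-stationary in $\theta$ in a quantitative sense: $|\nabla_\theta \Phi|\gtrsim \rho \ell = \sigma^2 \ell$. Under hypothesis (\ref{eqn-5.39}), integration by parts in $\theta$ then produces gain factors $h/(\rho\ell)\le h^\delta$ per iteration, yielding $O(h^s)$ for the truncated part. Thus $U^1_h$ may be replaced by its $\phi$-localized version modulo $O(h^s)$. The same microlocal argument, applied to $u^1_h$ using Proposition~\ref{prop-3.3} and Proposition~\ref{prop-3.7} with $\rho=\sigma^2$ (and (\ref{eqn-3.16}), (\ref{eqn-3.18}) verified from (\ref{eqn-5.39})), shows that $\bar\chi_{T'}u^1_h$ coincides with its $(C\ell, C\rho)$-microlocalization near $\bar\xi'^{\pm}$ modulo $O(h^s)$ after $F_{t\to h^{-1}\tau}$.

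Finally I would close the loop by a Duhamel / energy argument: the difference $v_h \coloneqq u^1_h - U^1_{h,N}$ satisfies $(hD_t - A)v_h = O(h^{N+1})$ with an $O(h^{N+1})$ boundary remainder, and zero initial data; hence $v_h = O(h^{N+1})$ on $|t|\le \epsilon$ modulo the propagation-cone restrictions. The contribution of $(\bar\chi_{T'}-\bar\chi_T)u^1_h$ is controlled by Corollary~\ref{coro-3.2}(ii) together with Proposition~\ref{prop-3.1}(ii), since under our hypotheses both $T$ and $T'$ lie in the admissible interval $[c\ell, \epsilon_1]$. Combining (i)--(iii) yields (\ref{eqn-5.40}). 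The main obstacle is bookkeeping the $\sigma^{-3n}$ growth of $B_n$: one must verify that each step of the semiclassical expansion gains a factor $h\sigma^{-3}\cdot\ell^{-1}\cdot(\cdots)$ that is bounded by $h^\delta$ under (\ref{eqn-5.39}), so that the expansion is truly asymptotic; this is essentially a careful accounting using the sharp derivative bounds of Proposition~\ref{prop-5.4} together with the two-scale stationary phase with effective semiclassical parameter $\hbar = h/(\rho\ell)$.
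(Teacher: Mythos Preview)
Your plan is sound and would yield the result, but the paper argues along a different, more geometric line. Rather than computing the residual $(hD_t-A)U^1_{h,N}$ and closing by Duhamel, the paper first invokes Proposition~\ref{prop-5.1} (uniqueness of the reflected billiard from $\bar y$ to $\bar x$, with reflection angle $\asymp\sigma$) together with the propagation results of Section~\ref{sect-3} to assert that $u^1_h(x,y,t)\,{}^t\!Q_y$ is negligible outside the $\Omega_{T,\rho,\sigma}$-neighborhood of that billiard, and that $u^1_h(x,y,t)(I-{}^t\!Q_y)$ simply does not reach $\bar x$ at all. It then observes, via Corollary~\ref{corollary-4.8}(ii), that $U^1_h$ is likewise concentrated there, and concludes $u^1_h\,{}^t\!Q_y\equiv U^1_h$ on $|t|\le T$; the asymptotic nature of (\ref{eqn-5.19}) is cited as a consequence of $\sigma\ge h^{\frac{1}{3}-\delta}$ (which is your $h\sigma^{-3}\le h^{\delta}$ computation, since $\sigma^3\ge\sigma^2\ell\ge h^{1-\delta}$). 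The extension from $T$ to $T'$ is handled separately by (\ref{eqn-5.41}), exactly as you do via Proposition~\ref{prop-3.1}(ii).

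Your residual-plus-stability route is more explicit about why the parametrix actually approximates $u^1_h$, and in particular makes transparent the role of the bounds (\ref{eqn-5.22})--(\ref{eqn-5.23}); the paper's route is shorter and leans on the billiard picture. One caution in your version: invoking Proposition~\ref{prop-3.7} with $\rho=\sigma^2$ requires the second condition in (\ref{eqn-3.16}), namely $\rho\ge C_0T$, i.e.\ $\sigma^2\ge C_0\ell$, which is \emph{not} implied by the hypotheses here (you only have $\sigma\ge C_0\ell$ from (\ref{eqn-5.4}) and $\sigma^2\ell\ge h^{1-\delta}$ from (\ref{eqn-5.39})). The paper avoids this by arguing directly from the billiard uniqueness rather than through the general microlocalization lemma; if you stick with your route you should replace the reference to Proposition~\ref{prop-3.7} by a direct appeal to Proposition~\ref{prop-3.3} and the billiard geometry of Proposition~\ref{prop-5.1}.
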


\begin{proof}
Let us fix $x=\bar{x}$ and $y=\bar{y}$  in (\ref{eqn-5.40}) and use $x$ as a variable. Due to Proposition~\ref{prop-5.1}  there is a single generalized billiard of the length 
$\le \epsilon$ with reflections at $\partial X$ from $\bar{x}$ to $\bar{y}$.  It has the length $\asymp \ell$, exactly one reflection and a reflection angle $\asymp \rho = (\eff(\bar{x})+\eff(\bar{y}))\ell^{-1}\gtrsim h^{\frac{1}{3}-\delta}$. 

Further, Proposition~\ref{prop-5.4}(ii) implies that  for $\sigma \ge h^{\frac{1}{3}-\delta}$ (which is due to (\ref{eqn-5.39}) and $\sigma \ge C_0\ell$) 
\textup{(\ref{eqn-5.19})} is a proper asymptotic series as $|t|\le T$ and we  define $U^1_h(x,y,t)$ through it (with cut-off). 

Furthermore, Corollary~\ref{corollary-4.8} implies that then $U^1_h (x,y,t)$ is negligible outside $\Omega_{2T,3\rho,3\sigma}$  
(by $(x_1,x',\xi')$) of this billiard  while the standard propagation results  imply that  $u^1_h(x,y,t) \,^t\!Q_y$ is also negligible outside of this vicinity provided
symbol of $Q$ is supported in the similar vicinity of $(\bar{y},\bar{\theta}')$, $\bar{\theta}'$ corresponds to this billiard.
Let  symbol of $Q$ be  $1$ in the similar vicinity of $(\bar{y},\bar{\xi}')$. Then  $ u^1_h(x,y,t) \,^t\!Q_y \equiv U^1_h (x,y,t)$ for $-T \le t\le  T$ 
with $0<T=C_0 \ell$.

On the other hand, in this case  $u^1_h(x,y,t) (I-^t\!Q_y)\equiv 0$ in $(\epsilon \rho \ell, \epsilon \ell )$-vicinity of $\bar{x}$ as $-T\le t\le \epsilon T$. 
Case of  $-\epsilon T \le t\le T$ is considered in the same way, albeit with the billiard from $\bar{x}$ to $\bar{y}$. Then we arrive to
(\ref{eqn-5.40}) with $T'=T$.

Finally, the standard propagation results imply that 
\begin{gather}
|F_{t\to h^{-1}\tau } \Bigl(\bigl(\bar{\chi}_{T'}(t) - \bar{\chi}_{T}(t)\bigr) u^1_h(x,y,t)\Bigr)|\le Ch^s
\label{eqn-5.41}
\end{gather}
as $T\le T'\le \epsilon$. Then (\ref{eqn-5.40}) expands to $T'\colon T\le T'\le \epsilon$. 
\end{proof}

\section{Spectral asymptotics}
\label{sect-5.2}

\begin{proposition}\label{prop-5.10}
In the framework of Proposition~\ref{prop-5.3}(ii) under extra assumption 
\begin{gather}
\sigma^2\ell \ge h^{1-\delta},\qquad \ell \ge h^{\frac{3}{5}}
\label{eqn-5.42}\\
\intertext{the following asymptotics holds}
e^{1,\T}_h(x,y,\tau) =(2\pi h)^{-d}\int _{a(y,\theta)<\tau } e^{ih^{-1}\varphi (x,y,\theta)}\,d\theta +O(h^{1-d}).
\label{eqn-5.43}
\end{gather} 
\end{proposition}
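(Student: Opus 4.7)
The plan is to evaluate the Tauberian expression (\ref{eqn-1.12}) using the geometric optics parametrix constructed above. Proposition~\ref{prop-5.9} permits me to replace $u^1_h$ by $U^1_h$ in that expression, so that under (\ref{eqn-5.42})
\[
e^{1,\T}_h(x,y,\tau) = h^{-1}\int_{-\infty}^\tau F_{t\to h^{-1}\tau'}\bigl(\bar\chi_T(t) U^1_h(x,y,t)\bigr)\,d\tau' + O(h^s).
\]
I then substitute the oscillatory expansion (\ref{eqn-5.19}). Since the phase $\Phi(x,y,t,\theta)=\varphi(x,y,\theta)+ta(y,\theta)$ is linear in $t$, Taylor expanding each amplitude $B_n(x,y,t,\theta)\phi(\theta)$ at $t=0$ factorizes the $t$-Fourier transform into $e^{ih^{-1}\varphi(x,y,\theta)}$ times derivatives of $T\hat{\bar\chi}\bigl(T(a(y,\theta)-\tau')/h\bigr)$.

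The integral over $\tau'\le\tau$ is computed via the substitution $s=T(a(y,\theta)-\tau')/h$: the leading factor becomes $h\cdot \mathbf{1}[a(y,\theta)<\tau]$ plus a transition layer of width $h/T$ in $a(y,\theta)$. The principal ($n=0$) part then yields the right-hand side of (\ref{eqn-5.43}), modulo the cutoff $\phi(\theta)$ from Proposition~\ref{prop-5.9}; that cutoff is removable by non-stationary phase in $\theta$ away from the stationary set identified in Corollary~\ref{corollary-4.8}(ii).

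What remains is to control (a) the Taylor remainders in $t$, (b) the successive terms with $n\ge 1$, and (c) the transition-layer contribution. Using the amplitude bounds (\ref{eqn-5.22})--(\ref{eqn-5.23}) of Proposition~\ref{prop-5.4} (which give $|\partial^\gamma_\theta B_n|\lesssim \sigma^{-3n-|\gamma|}$, with the improvement of (\ref{eqn-5.23}) in the tangential directions) and stationary phase in the $(d-1)$ transverse directions on $\{a(y,\theta)=\tau'\}$ (which gains a factor $(h/\ell)^{(d-1)/2}$), the $n$-th successive term is smaller than the preceding one by $\asymp h/\sigma^3$, which is $\le h^{\delta'}$ under (\ref{eqn-5.42}). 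The transition-layer contribution is controlled via the rapid decay of $\hat{\bar\chi}$ combined with the same stationary-phase factor, and has size $O(h^{-d}(h/\ell)^{(d-1)/2}\cdot h/T)$.

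The hard part is to assemble these three bounds quantitatively so the total remainder is $O(h^{1-d})$: tracking the interplay between the amplitude blowup $\sigma^{-3n}$, the stationary-phase gain $(h/\ell)^{(d-1)/2}$, and the transition-layer width $h/T\asymp h/\ell$ is exactly what forces the threshold $\ell\ge h^{3/5}$ in (\ref{eqn-5.42}). This bookkeeping, rather than any single ingredient, is the core of the argument.
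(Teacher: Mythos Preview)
Your approach is essentially the paper's: invoke Proposition~\ref{prop-5.9}, Taylor expand $B_n$ in $t$, and bound the resulting pieces via (\ref{eqn-5.22})--(\ref{eqn-5.23}) together with stationary phase on $\Sigma(y,\tau)$. Two points need tightening.

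First, your transition-layer bound $O\bigl(h^{-d}(h/\ell)^{(d-1)/2}\cdot h/T\bigr)$ with $T\asymp\ell$ equals $O\bigl(h^{-d}(h/\ell)^{(d+1)/2}\bigr)$, which for $d=2$ and $h^{3/5}\le\ell<h^{1/3}$ is \emph{not} $O(h^{-1})$. The paper sidesteps this by simply declaring that the $k=0$ terms ``become'' (\ref{eqn-5.44}); the actual mechanism is that $\bar\chi\equiv 1$ near $0$ forces the Fourier transform of the smoothing correction $H(\cdot)-\uptheta(\cdot)$ to vanish on a neighborhood of the origin, and with $T$ chosen so that $|t_\pm|/T<\tfrac12$ the layer contribution is then $O(h^s)$. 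Your sketch does not supply this.

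Second, you misattribute the threshold $\ell\ge h^{3/5}$. It does not come from the transition layer, nor from the $n\ge1$ blowup $h\sigma^{-3}$ (that is governed by $\sigma^2\ell\ge h^{1-\delta}$). It comes from the \emph{sub-leading} stationary-phase correction to the $n=0$, $k=0$ integral: differentiating $B_0(x,y,0,\theta)$ in~$\theta$ and using the improved bound (\ref{eqn-5.23}) (rather than (\ref{eqn-5.22})) gives an error of size $Ch^{-d}(h/\ell)^{(d+3)/2}(1+\ell\sigma^{-2})$, and for $d=2$ this is $\le Ch^{-1}$ exactly when $\ell\ge h^{3/5}$. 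You should also record that $B_0(x,y,0,\theta)=1+O(\ell\sigma^{-1})$, which is what lets you strip the amplitude and arrive at the bare phase integral in (\ref{eqn-5.43}).
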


\begin{proof}
Let us   plug $U^1_h$ instead of $u^1_h$ into Tauberian expression $e^{1,\T}_h(x,y,\tau)$ with $T=\epsilon$.  
Let us decompose $B_n (x,y,t,\theta)$ into asymptotic series by $t^{k}$.

\begin{enumerate}[wide,label=(\alph*),  labelindent=0pt]
\item
Then terms with $k=0$ become
\begin{gather}
(2\pi h)^{-d} \int _{a(y,\theta)< \tau} \sum_{n\ge 0} e^{ih^{-1}\varphi (x,y,\theta)} B_n (x,y,0, \theta) h^n \,d\theta
\label{eqn-5.44}
\end{gather}
and we claim that these terms do not exceed $C\sigma^{-3n} h^{-d+n} (h\ell^{-1})^{(d+1)/2}$. Indeed, we know from Proposition~\ref{prop-5.3}(ii) that
$\varphi$ is uniformly infinitely smooth by $\theta$ and from Corollary~\ref{corollary-4.8} that there are no stationary points by $\theta$ and restriction of $\varphi (x,y,\theta)$ to $\Sigma (y,\tau)$ has two non-degenerate critical points. The leading terms would be of this magnitude, while all other terms would have extra factors not exceeding $h/\sigma^2\ell$ due to (\ref{eqn-5.22}). 
%\enlargethispage{\baselineskip}

We want to estimate all extra terms by $Ch^{1-d}$. One can see easily, that $d=2$ is the worst case but even then therms with $n\ge 1$ are $O(h^{1-d})$ and terms with $h/\sigma^2\ell$ appear when we differentiate $B_0 (x,y,0,\theta)$ by $\theta$. Then due to (\ref{eqn-5.23}) these terms do not exceed
\begin{gather*}
C h^{-d} (h\ell^{-1})^{\frac{d+3}{2}} (1 + \ell \sigma^{-2} )
\end{gather*}
and this does not exceed $Ch^{1-d}$ under assumption (\ref{eqn-5.32}). \enlargethispage{\baselineskip}

Finally, since $B_0(x,y,0,\theta)= 1+O(\ell\sigma^{-1})$ we can replace $B_0(x,y,0,\theta)$ by $1$ resulting in the main part of asymptotics (\ref{eqn-5.43}).

\item
Consider now terms with $k\ge 1$ in the decomposition of $B_n(x,y,t,\theta)$. We can rewrite these terms as 
\begin{align}
&(2\pi h)^{1-d} \partial_\tau^{k-1}\int_{\Sigma  (y,\tau)} B'_{n,k} (x,y,\theta) h^{n+k-1} \,d\theta:d_\theta a(y,\theta)\notag\\
\intertext{and using $\xi$-microhyperbolicity rewrite it as}
&(2\pi h)^{1-d} \int_{\Sigma (y,\tau)} B''_{n,k} (x,y,\theta) h^{n} (h\ell^{-1})^k  \,d\theta:d_\theta a(y,\theta).
\label{eqn-5.45}
\end{align}
with $B''_{n,k}$  coming from $B_n$ with  $k$ derivatives by $t$ and no more than $(k-1)$ by $\theta$.  Then using (\ref{eqn-5.22})--(\ref{eqn-5.23}) we can estimate these terms by $Ch^{-\frac{d-1}{2}}\ell^{-\frac{d-1}{2}}\le Ch^{1-d}$.
\end{enumerate}
\end{proof}

\begin{remark}\label{remark-5.11}
The similar asymptotics 
\begin{gather}
e^{0,\T}_h(x,y,\tau) =(2\pi h)^{-d}\int _{a(y,\theta)<\tau } e^{ih^{-1}\varphi^0 (x,y,\theta)}\,d\theta +O(h^{1-d})
\label{eqn-5.46}
\end{gather} 
is well-known.
\end{remark}

\begin{remark}\label{remark-5.12}
Consider in the framework of Proposition~\ref{prop-5.3}(i), (ii)   integrals $I_h(\varphi )$ and $I_h(\varphi^0)$ in the right-hand expressions of (\ref{eqn-5.46}) and  (\ref{eqn-5.43}) correspondingly:
\begin{gather}
I_h (\varphi)\coloneqq (2\pi h)^{-d}\int _{a(y,\theta)<\tau } e^{ih^{-1}\varphi  (x,y,\theta)}\,d\theta.
\label{eqn-5.47}
\end{gather} 
\begin{enumerate}[wide,label=(\roman*),  labelindent=0pt]
\item
Let   $d\ge 3$. Then  $I_h (\varphi^0) = I _h (\bar{\varphi}^0)+O(h^{1-d})$ and  $I_h (\varphi) = I _h (\bar{\varphi})+O(h^{1-d})$
with  $\bar{\varphi}^0(x,y,\theta)= \langle x-y,\theta \rangle$, $\bar{\varphi}^0(x,y,\theta) =\langle x-\tilde{y},\theta \rangle$.
\item
Let $d=2$. Then  $I_h(\varphi^0) \equiv I_h(\bar{\varphi}^0)$ with an error $O(h^{-1})$ as $\ell ^0\ge h^{\frac{1}{3}}$, 
$O(h^{-1})$ as $\ell ^0(x,y)\ge h^{\frac{1}{3}}$, $O(h^{-\frac{1}{2}}\ell^{-\frac{3}{2}})$ as $h^{\frac{1}{2}}\le \ell^0(x,y)\le h^{\frac{1}{3}}$ and  $O(h^{-\frac{3}{2}}\ell^{\frac{1}{2}})$ as $h\le \ell ^0(x,y)\le h^{\frac{1}{2}}$. 

Also $I_h(\varphi) \equiv I_h(\bar{\varphi}^)$ with an error $O(h^{-1})$ as $\ell ^0\ge h^{\frac{1}{3}}$, $O(h^{-1})$ as $\ell ^0(x,y)\ge h^{\frac{1}{3}}$, $O(h^{-\frac{1}{2}}\ell^{-\frac{3}{2}})$ as $h^{\frac{1}{2}}\le \ell (x,y)\le h^{\frac{1}{3}}$ and  $O(h^{-\frac{3}{2}}\ell^{\frac{1}{2}})$ as $h\le \ell (x,y)\le h^{\frac{1}{2}}$. 
\end{enumerate}
\end{remark}

\begin{proof}
Observe first that due to the stationary phase principle $I_h (\varphi) = Ch^{-d}(h\ell^{ -1})^{(d+1)/2}$  and the same is true for $I_h (\bar{\varphi})$.
Therefore for $\ell \ge h^{\frac{1}{2}}$ we have simply estimates rather than the error estimates. 

On the other hand due to proposition~\ref{prop-5.7}  $\varphi (x,y,\theta)= \langle x-\tilde{y},\theta \rangle + O(\ell^2)$, for $\ell \le h^{\frac{1}{2}}$ the error does not exceed
$Ch^{-d}(h\ell^{ -1})^{(d+1)/2}\times \ell^2h^{-1}$ which is $O(h^{1-d})$.

The same is true for $I_h (\varphi^0)$ and $I_h (\bar{\varphi}^0)$ albeith with $\ell^0$ rather than $\ell$.
\end{proof}

Now we need to deal with $d=2$ and $\ell^0\le h^{\frac{1}{3}}$ or  $\ell \le h^{\frac{1}{3}}$.

\begin{proposition}\label{prop-5.13}
Let $d=2$. Then
\begin{enumerate}[wide,label=(\roman*),  labelindent=0pt]
\item
In the framework of Proposition~\ref{prop-5.3}(i) for $h^{1-\delta}\le \ell^0(x,y)\le h^{\frac{1}{3}}$
\begin{gather}
\int_{a(y,\theta)<\tau} e^{ih^{-1}\varphi^0(x,y,\theta)}\,d\theta = \int_{a(w,\xi )<\tau} e^{ih^{-1}\langle x-y,\xi\rangle }\,d\xi +O(h)
\label{eqn-5.48}
\end{gather}
with  $w=\frac{1}{2}(x+y)$.
\item
In the framework of Proposition~\ref{prop-5.3}(ii) for $h^{1-\delta}\le \ell (x,y)\le h^{\frac{1}{3}}$
\begin{multline}
\int_{a(y,\theta)<\tau} e^{ih^{-1}\varphi (x,y,\theta)}\,d\theta \\
= \int_{a(w,\xi )<\tau} e^{ih^{-1}(\langle x-\tilde{y},\xi\rangle + \frac{1}{2}\varkappa (w',\xi',\tau)(x_1^2+y_1^2)) }\,d\xi +O(h)
\label{eqn-5.49}
\end{multline}
with $w=(0,\frac{1}{2}(x'+y'))$ and with 
\begin{gather}
\varkappa (x',\xi',\tau)=(\tau-b(x_2,\xi_2))^{-\frac{1}{2}}a_{x_1}(x,\xi)|_{x_1=\xi_1=0};
\label{eqn-5.50}
\end{gather}
recall that $b(w_2,\xi_2)=a(x,\xi)|_{x_1=\xi_1=0}$.
\end{enumerate}
\end{proposition}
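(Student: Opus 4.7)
My plan is to compute both sides via stationary phase and show that the leading contributions agree to within $O(h)$. In dimension $d=2$ with $\ell\le h^{1/3}$, the Weyl-type integral on the right-hand side has magnitude $(h/\ell)^{(d+1)/2}=(h/\ell)^{3/2}$ by strong convexity of $\{a(w,\xi)=\tau\}$, which is $O(h)$ at the endpoint $\ell\sim h^{1/3}$ of the regime; thus $O(h)$ is comparable to the full integral itself only at this endpoint and is a genuine remainder for smaller $\ell$.

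For part (i), I introduce the midpoint change of variables $\theta\mapsto\xi=\nabla_x\varphi^0(w,y,\theta)$ with $w=\tfrac{1}{2}(x+y)$. Since $\varphi^0$ generates the Hamiltonian flow of $a$ and $\xi$ is the momentum at the midpoint along the bicharacteristic from $(y,\theta)$, conservation of the Hamiltonian yields the \emph{exact} identity $a(y,\theta)=a(w,\xi)$, so the integration regions correspond bijectively. The midpoint quadrature rule applied to
\begin{equation*}
\varphi^0(x,y,\theta)=\int_0^1\nabla_x\varphi^0(y+t(x-y),y,\theta)\cdot(x-y)\,dt
\end{equation*}
gives $\varphi^0(x,y,\theta)=\langle x-y,\xi\rangle+R$ with $R=O(|x-y|^3)$, while Taylor expansion of $\xi$ in $\theta$ gives Jacobian $|\partial\theta/\partial\xi|=1+O(|x-y|)$. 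After substitution the integrands differ only by this Jacobian perturbation and by the factor $e^{ih^{-1}R}$. Applying stationary phase around the critical points $\theta^\ast_\pm\leftrightarrow\xi^\ast_\pm$ of Proposition~\ref{prop-2.2} (which correspond exactly under the change of variables), the Jacobian-induced error is $O(\ell\cdot(h/\ell)^{3/2})=O(h^{3/2}\ell^{-1/2})\le O(h^{4/3})$, while the phase perturbation $R=O(\ell^3)\le O(h)$ shifts the leading stationary-phase action by at most $O(h)$ and produces an absolute error in the integral bounded by $(h/\ell)^{3/2}\cdot|e^{iO(1)}-1|=O(h)$.

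For part (ii) the same strategy applies with the tangential midpoint $w=(0,\tfrac{1}{2}(x'+y'))$ and a symmetrized change of variables across the boundary. Using Proposition~\ref{prop-5.3}(ii) to control $\varphi$, I Taylor-expand in $(x_1,y_1)$ about $0$: the reflection boundary conditions $\varphi|_{x_1=0}=\varphi^0|_{x_1=0}$ and $\partial_{x_1}\varphi|_{x_1=0}=-\partial_{x_1}\varphi^0|_{x_1=0}$ (and the symmetric ones in $y_1$) reproduce the linear part $\langle x-\tilde{y},\xi\rangle$ after the change of variables. The quadratic coefficient $\tfrac{1}{2}\varphi_{x_1x_1}|_{x_1=0}$ is extracted by differentiating the eikonal identity $\varphi_{x_1}^2=a(y,\theta)+b(x,\nabla_{x'}\varphi)$ once in $x_1$ and restricting to $x_1=0$ (this is equation~(\ref{eqn-5.9})); combined with $\varphi_{x_1}|_{x_1=0}^2=\tau-b(w_2,\xi_2)$ and the definition $\lambda(w',\xi')=a_{x_1}|_{x_1=\xi_1=0}$, this identifies the coefficient as $\varkappa(w',\xi',\tau)=(\tau-b)^{-1/2}a_{x_1}|_{x_1=\xi_1=0}$. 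Symmetrizing in $y_1$ yields the full quadratic form $\tfrac{1}{2}\varkappa(x_1^2+y_1^2)$, and the remaining cubic-in-$(x-y)$ residual is controlled exactly as in (i).

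The main obstacle is that the phase correction $R=O(|x-y|^3)$ is not pointwise $o(h)$ in the present regime: at $\ell\sim h^{1/3}$ the ratio $R/h$ is $O(1)$, so $e^{ih^{-1}R}-1$ cannot be Taylor-expanded naively and a crude pointwise bound on the integrand yields only $O(1)$ rather than $O(h)$. The $O(h)$ error is recovered only because the leading stationary-phase contribution is itself of size $(h/\ell)^{3/2}=O(h)$ at this endpoint, so even a full $O(1)$ phase shift produces only $O(h)$ absolute error. For part (ii) the additional delicacy is the precise identification of $\varkappa$, which demands careful combination of the eikonal identities of Proposition~\ref{prop-5.3} with the reflection boundary conditions and the particular form of $b(x,\nabla_{x'}\varphi)$.
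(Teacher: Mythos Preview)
Your Part~(i) is essentially the paper's argument in different clothing: the paper parametrizes by the symmetric Hamiltonian flow $\Psi_{\pm t}(\bar w,\bar\xi)$ from the midpoint, which is exactly your map $\theta\mapsto\xi=\nabla_x\varphi^0(w,y,\theta)$ read backwards, and both produce the key improvement $\varphi^0=\langle x-y,\xi\rangle+O(\ell^3)$. Two minor points: your phase-error bound $(h/\ell)^{3/2}\,|e^{iO(1)}-1|$ gives $O(h)$ only at the endpoint $\ell\sim h^{1/3}$; for smaller $\ell$ you must use $|e^{ih^{-1}R}-1|\le h^{-1}|R|=O(\ell^3/h)$, which then gives $(h/\ell)^{3/2}\cdot\ell^3/h=h^{1/2}\ell^{3/2}\le h$. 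Also $R$ is $\theta$-dependent, so one cannot simply multiply the stationary-phase value by a constant factor; the paper handles this by first reducing to $\Sigma(y,\tau)$ (gaining $h\ell^{-1}$) and then localizing to a window $|s|\le\varepsilon=(h/\ell)^{1/2}$ around the critical point, where the full error $O(\ell^3+s^2\ell)$ is controlled.

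Your Part~(ii) takes a genuinely different route from the paper and has a gap. The paper does \emph{not} Taylor-expand $\varphi$ in $(x_1,y_1)$; instead it makes the change of variables $\mathsf{x}_1=x_1+\tfrac12\varkappa(w_2,\xi',\tau)x_1^2$ (and the same for $y_1$), observes that in these coordinates the symbol $(a-\tau)$ is independent of $\mathsf{x}_1$ modulo $O(\ell^2)$, so the reflected eikonal becomes $(\mathsf{x}_1+\mathsf{y}_1)\xi_1+(x_2-y_2)\xi_2+O(\ell^3)$ by the reflection method plus Part~(i). The quadratic term $\tfrac12\varkappa(x_1^2+y_1^2)$ then drops out automatically and symmetrically from $\mathsf{x}_1+\mathsf{y}_1$. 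Your approach asserts ``symmetric boundary conditions in $y_1$'' to get the $y_1^2$ coefficient, but no such conditions are stated or proved: Proposition~\ref{prop-5.3}(ii) only gives reflection data at $x_1=0$, and $\varphi$ depends on $y$ only through the smooth free eikonal $\varphi^0$, which has no reflection structure at $y_1=0$. Identifying $\tfrac12\partial_{y_1}^2\varphi|_{x_1=y_1=0}$ with $\tfrac12\varkappa$ (and showing the mixed $x_1y_1$ term is negligible) would require a separate argument---either a self-adjointness symmetry relating $\varphi(x,y,\theta)$ to $\varphi(y,x,\eta)$, or the paper's coordinate change---which you have not supplied.
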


\begin{proof}
\begin{enumerate}[wide,label=(\roman*),  labelindent=0pt]
\item
Consider $(z,\bar{\xi})\colon a(\bar{w},\bar{\xi})=\tau$ and Hamiltonian trajectory $\Psi_t (\bar{w},\bar{\xi})$ of 
$a(x,\xi)$, $\Psi_0 (\bar{w},\bar{\xi})=(\bar{w},\bar{\xi})$. Then as  $(y(t), \theta(t))=\Psi_t(\bar{w},\bar{\xi})$ and $(x(t),.)=\Psi_{-t}(\bar{w},\bar{\xi})$, $|t|\le C_0\ell$  we have
\begin{gather}
\varphi^0 (x(t),y(t),\theta(t))= \langle x(t)-y(t), \bar{\xi} \rangle + O(\ell^3)
\label{eqn-5.51}
\end{gather}
where  in this part of the proof for simplicity we write $\ell$ rather than $\ell^0$.

Observe that as $\ell \ge h^{1-\delta} $ we can reduce  integrals in (\ref{eqn-5.48}) to $\Sigma (y,\tau)$ and $\Sigma (\bar{w},\tau)$ respectively, gaining factor $h\ell^{-1}$. Consider first those reduced integrals over $\epsilon$-vicinities of $\theta(t)$ and $\bar{\xi}$. We see that there is a diffeomorphism of these vicinities and
\begin{gather*}
\varphi^0(x(t),y(t), \theta (\bar{w}, \xi(s), t)) =  \langle x(t)-y(t),  \xi  \rangle +  O(\ell^3+s^2\ell )
\end{gather*}
provided $a_\theta \parallel \theta$ at $(\bar{w},\bar{\xi})$ (we can assume it without any loss of the generality), $s$ is an angle parameter on $\Sigma(\bar{w},\tau)$, $\xi(0)=\bar{\xi}$. Making change of variables we estimate  an error by $C(h\ell^{-1})^{\frac{3}{2}}\times (\ell^3h^{-1}+\varepsilon^2 \ell h^{-1} + \ell)$ when we integrate over $|s|\le \varepsilon$, and when we integrate over $|s|\ge \varepsilon$, we can multiply it by an arbitrary large power of $h/\varepsilon ^2\ell$. Taking $\varepsilon =(h\ell^{-1})^{\frac{1}{2}}$ we make both errors  $O(h)$.

On the other hand, fix $\bar{x}$ and $\bar{y}$ and find corresponding $\bar{w}$, $\bar{\xi}$ and $t$. One can see easily that $\bar{w}=w+O(\ell^2)$ and the error in
the integral in the right hand expression of (\ref{eqn-5.48}) when we redefine $w$ does not exceed $C(h\ell)^{\frac{1}{2}}\ell^2 =O(h)$.  

\item
Making change of variables $\x_1= x_1+\frac{1}{2}\varkappa (w_2,\xi',\tau)x_1^2$ and therefore  $\y_1= y_1+\frac{1}{2}\varkappa (w_2,\xi',\tau)y_1^2$ we would arrive to operator with the symbol which, after division by $(a-\tau)$, modulo  $O(\ell^2)$ does not depend on $\x_1$  and therefore with the corresponding phase function 
\begin{gather}
\varphi   = (\x_1+\y_1)\xi_1 + (x_2-y_2)\xi_2 + O(\ell^3).
\notag
\intertext{Then we can use the the method of reflection and after this, Statement~(i). So we get}
\int_{a(w,\xi )<\tau} e^{ih^{-1}(  (\x_1+\y_1)\xi_1  + (x_2-y_2)\xi_2}\,d\xi +O(h)
\label{eqn-5.52}
\end{gather}
with $w =(0, \frac{1}{2}(x_2+y_2))$; which is exactly the right-hand expression of (\ref{eqn-5.49}). 
\end{enumerate}
\end{proof}

\begin{remark}\label{remark-5.14}
 \begin{enumerate}[wide,label=(\roman*),  labelindent=0pt]
 \item
 Therefore in Theorem~\ref{thm-1.2} 
\begin{multline}
e_{h,\corr} (x,y,\tau)\\
 =[\mp] (2\pi h)^{-1}\int _{\{a(\frac{1}{2}(x+y),\xi)<\tau\}} e^{ih^{-1}\langle x-\tilde{y},\xi\rangle}
\Bigl(e^{ih^{-1} \frac{1}{2}\varkappa (\frac{1}{2}(x_2+y_2),\xi_2,\tau)}-1\Bigr)\,d\xi
\label{eqn-5.53}
\end{multline}
with $\varkappa (x_2,\xi_2,\tau)$ defined by (\ref{eqn-5.50}).
\item
One can see easily that in the framework of Proposition~\ref{prop-4.5} expressions (\ref{eqn-5.53}) and (\ref{eqn-4.49}) coincide modulo $O(h^{-1})$.
\end{enumerate}
\end{remark}

\chapter{Synthesis and final remarks}
\label{sect-6}
\begin{proof}[Proof of Theorems~\ref{thm-1.1} and~\ref{thm-1.2}]
We know that for $\ell(x,y)\le \epsilon$ in the frameworks of Theorems~\ref{thm-1.1} and\ref{thm-1.2} estimate (\ref{eqn-1.8}) holds.
 \begin{gather*}
 e_h(x,y,\tau) = e^\T _{T,h} (x,y,\tau)+O(h^{1-d}).
 \end{gather*}
  \begin{enumerate}[wide,label=(\roman*),  labelindent=0pt]
 \item
We also know from Section~\ref{sect-2} that in the framework of Theorem~\ref{thm-1.1} $e^\T _{T,h} (x,y,\tau)$ can be defined as an oscillatory integral and Proposition~\ref{prop-5.13}(i) implies that modulo $O(h^{1-d})$ this oscillatory integral can be rewritten as $e^\W_h(x,y,\tau)$ defined by (\ref{eqn-1.3}). This proves 
Theorem~\ref{thm-1.1}.

\item
In the framework of Theorem~\ref{thm-1.2} we established in Section~\ref{sect-3}  
\begin{gather*}
e^{1,\T} _{T,h} (x,y,\tau)=O(h^{1-d})
\end{gather*}
 as $d\ge 3$ and $\ell(x,y)\ge h^{\frac{1}{2}-\delta}$ and as $d=2$ and  $ \ell(x,y)\ge h^{\frac{1}{3}-\delta}$. 
Further, we proved there that $e^{1,\T} _{T,h} (x,y,\tau)=O(h^{1-d-\delta})$ as $d=2$ and $\ell (x,y)\ge h^{\frac{1}{3}+\delta}$. In both cases we also assume there that $|x'-y'| \ge \epsilon \ell(x,y)$. However it follows from Section~\ref{sect-5} that this latter condition is not necessary. Combined with Remark~\ref{remark-1.3}(ii) it proves Theorem~\ref{thm-1.2} in this case.
%\enlargethispage{\baselineskip}

Let $d\ge 3$. 
In Section~\ref{sect-4}  we proved   asymptotics  
\begin{gather}
e^{1,\T}_{T,h}(x,y)= [\mp] e^{0,\W}_h(x,\tilde{y},\tau)+O(h^{1-d})
\label{eqn-6.1}
\end{gather}
if either  $\ell (x,y)\le h^{\frac{1}{2} +\delta}$ \underline{or}  $ h^{\frac{1}{2}+\delta}\le \ell(x,y)  \le h^{\frac{1}{2} -\delta}$ and 
\begin{gather}
\eff(x)+\eff(y)\le \sigma \ell (x,y)
\label{eqn-6.2}
\end{gather}
with $ \sigma = h^{2\delta}$. In Section~\ref{sect-5}  we proved the same asymptotics as 
 \begin{gather}
\eff(x)+\eff(y)\ge \sigma \ell (x,y)
\label{eqn-6.3}
\end{gather}
with $\sigma= h^{\frac{1}{2}-\delta}\ell^{-\frac{1}{2}}+C_0 \ell$. These domains overlap and cover all values of $\ell \le h^{\frac{1}{2}-\delta}$  and $\sigma $.

\item
Let $d=2$. Then instead of asymptotics  (\ref{eqn-6.1})  we have
\begin{gather}
e^{1,\T}_{T,h}(x,y)= [\mp] e^{0,\W}_h(x,\tilde{y},\tau)+e_{h,\corr}(x,y,\tau)+ O(h^{1-d})
\label{eqn-6.4}
\end{gather}
 and Sections~\ref{sect-4} and~\ref{sect-5} cover cases (\ref{eqn-6.2}) with $\sigma =h^{1+\delta}\ell^{-2}$ and (\ref{eqn-6.3}) with
 $\sigma =h^{\frac{1}{3}-\delta}$, $\ell(x,y)\le h^{\frac{1}{3}+\delta}$.  These domains overlap and cover all values  $\ell \le h^{\frac{1}{3}+\delta}$ and $\sigma$.
Here we are left with domain $h^{\frac{1}{3}+\delta }\le \ell\le h^{\frac{1}{3}-\delta}$, $\sigma =h^{\frac{1}{3}-\delta}$ where we have less precise estimate
$e^{1,\T} _{T,h} (x,y,\tau)=O(h^{1-d-\delta})$.
\end{enumerate}
\end{proof}
\enlargethispage{\baselineskip}

\begin{remark}\label{remark-6.1}
\begin{enumerate}[wide,label=(\roman*),  labelindent=0pt]
\item
We would like to derive remainder estimate $O(h^{-1})$ as $d=2$ and $\ell(x,y)\le \epsilon$, thus removing the gap  $h^{\frac{1}{3}+\delta} \le \ell \le h^{\frac{1}{3}-\delta}$, $\eff (x)+\eff(y)\le h^{\frac{1}{3}-\delta}$.

\item
For Schr\"odinger operator (\ref{eqn-1.4})  can get rid of $\xi$-microhyperbolicity assumption by rescaling method  using scaling function 
\begin{gather}
\gamma_x=(\epsilon |V(x)-\tau|+h^{\frac{2}{3}} ).
\label{eqn-6.5}
\end{gather}
Then using   arguments of Proposition~\ref{OOD2-prop-3.2} and Remark~\ref{OOD2-remark-4.1} of \cite{OOD2} we have a remainder estimate $O(h^{1-d}\gamma_x^{(d-3)/2}\gamma_y^{(d-3)/2})$  which as $d\ge 3$ is $O(h^{1-d})$; however   as $d=2$ it is as bad as $O(h^{-\frac{4}{3}})$.

\item
As $d=2$   away from $\partial X$ remainder estimate $O(h^{-1})$  in the regular zone and $O(h^{-\frac{16}{15}})$ or better in the singular zone has been derived in \cite{OOD2}. However it required the analysis of the Hamiltonian trajectories and introduction of the correction term. This does not look feasible near the boundary
especially because both  the boundary and degeneration define correction term.
\end{enumerate}
\end{remark}

\bibliographystyle{amsplain}

\end{document}